\patchcmd{\section}{\scshape}{\bfseries\Large}{}{}
\renewcommand{\@secnumfont}{\bfseries}
\renewcommand{\tocsection}[3]{%
  \indentlabel{\@ifnotempty{#2}{\bfseries\ignorespaces#1 #2\quad}}\bfseries#3}
\renewcommand{\tocsubsection}[3]{%
  \indentlabel{\@ifnotempty{#2}{\ignorespaces#1 #2\quad}}#3}
\newcommand\@dotsep{4.5}
\def\@tocline#1#2#3#4#5#6#7{\relax
  \ifnum #1>\c@tocdepth 
  \else
    \par \addpenalty\@secpenalty\addvspace{#2}%
    \begingroup \hyphenpenalty\@M
    \@ifempty{#4}{%
      \@tempdima\csname r@tocindent\number#1\endcsname\relax
    }{%
      \@tempdima#4\relax
    }%
    \parindent\z@ \leftskip#3\relax \advance\leftskip\@tempdima\relax
    \rightskip\@pnumwidth plus1em \parfillskip-\@pnumwidth
    #5\leavevmode\hskip-\@tempdima{#6}\nobreak
    \leaders\hbox{$\m@th\mkern \@dotsep mu\hbox{.}\mkern \@dotsep mu$}\hfill
    \nobreak
    \hbox to\@pnumwidth{\@tocpagenum{\ifnum#1=1\bfseries\fi#7}}\par
    \nobreak
    \endgroup
  \fi}
\renewcommand\csname r@tocindent0\endcsname{0pt}
\def\l@subsection{\@tocline{2}{0pt}{2.5pc}{5pc}{}}
\newcommand{\R}{\mathbb{R}}
\newcommand{\Z}{\mathbb{Z}}
\newcommand{\T}{\mathbb{T}}
\newcommand{\A}{\mathcal{A}}
\newcommand{\E}{\mathcal{E}}
\newcommand{\bu}{\bm{u}}
\newcommand{\bx}{\bm{x}}
\newcommand{\X}{\bm{X}}
\newcommand{\be}{\bm{e}}
\newcommand{\bv}{\bm{v}}
\newcommand{\p}{\partial}
\newcommand{\dive}{{\rm{div}}}
\newcommand{\SB}{{\rm SB}}
\newcommand{\floor}[1]{\lfloor #1 \rfloor}
\newcommand{\abs}[1]{\left\lvert #1 \right\rvert}
\newcommand{\norm}[1]{\left\lVert #1 \right\rVert}
\newcommand{\wh}[1]{\widehat{#1}}
\newcommand{\wt}[1]{\widetilde{#1}}
\newcommand{\mc}[1]{\mathcal{#1}}
\newcommand{\paren}[1]{\left(#1\right)}
\newcommand{\PD}[2]{\frac{\partial#1}{\partial#2}}
\newcommand{\PDD}[3]{\frac{\partial^{#1}{#2}}{\partial{#3}^{#1}}}
\newtheorem{theorem}{Theorem}[section]
\newtheorem{lemma}[theorem]{Lemma}
\newtheorem{proposition}[theorem]{Proposition}
\theoremstyle{definition}
\newtheorem{remark}[theorem]{Remark}
\begin{document}
\title[Accuracy of SBT in approximating force exerted by thin fiber on viscous fluid]{Accuracy of slender body theory in approximating force exerted by thin fiber on viscous fluid}
\author{Yoichiro Mori}
\address{Department of Mathematics, University of Pennsylvania, Philadelphia, PA 19104}
\email{y1mori@sas.upenn.edu}

\author{Laurel Ohm}
\address{School of Mathematics, University of Minnesota, Minneapolis, MN 55455}
\email[Corresponding author]{ohmxx039@umn.edu}

\thanks{This research was supported in part by NSF DMS-1907583 and the Simons Foundation Math+X grant awarded to Y.M., and a UMN Doctoral Dissertation Fellowship, awarded to L.O.}

\maketitle

\begin{abstract}
We consider the accuracy of slender body theory in approximating the force exerted by a thin fiber on the surrounding viscous fluid when the fiber velocity is prescribed. We term this the \emph{slender body inverse problem}, as it is known that slender body theory converges to a well-posed PDE solution when the force is prescribed and the fiber velocity is unknown. From a PDE perspective, the slender body inverse problem is simply the Dirichlet problem for the Stokes equations, but from an approximation perspective, nonlocal slender body theory exhibits high wavenumber instabilities which complicate analysis. Here we consider two methods for regularizing the slender body approximation: spectral truncation and the $\delta$-regularization of Tornberg and Shelley (2004). For a straight, periodic fiber with constant radius $\epsilon>0$, we explicitly calculate the spectrum of the operator mapping fiber velocity to force for both the PDE and the approximations. We show that the spectrum of the original slender body approximation agrees closely with the PDE solution at low wavenumbers but differs at high frequencies, allowing us to define a truncated approximation with a wavenumber cutoff $\sim1/\epsilon$. For both the truncated and $\delta$-regularized approximations, we obtain similar convergence results to the PDE solution as $\epsilon\to0$: a fiber velocity with $H^1$ regularity gives $O(\epsilon)$ convergence, while a fiber velocity with at least $H^2$ regularity yields $O(\epsilon^2)$ convergence. Moreover, we determine the dependence of the $\delta$-regularized error estimate on the regularization parameter $\delta$. 
\end{abstract}

\tableofcontents

\section{Introduction}
Slender body theory seeks to describe the dynamics of thin filaments immersed in a 3D Stokes fluid \cite{batchelor1970slender,  johnson1980improved, keller1976slender, lighthill1976flagellar, tornberg2004simulating}. A central problem in slender body theory is to find the velocity of the surrounding fluid $\bm{u}$ and of the fiber $\bm{u}^\SB_{\rm C}$ given a 1D force density $\bm{f}$ along the fiber centerline, for which different approximation methods have been proposed \cite{nguyen2014computing, tornberg2004simulating}. In previous work \cite{rigid_SBT, closed_loop, free_ends}, the authors and D. Spirn investigated the accuracy of a canonical approximation to this problem due to Keller--Rubinow and Johnson \cite{johnson1980improved, keller1976slender}. To do so, we introduced the \emph{slender body PDE}, proved its well-posedness, and derived an error estimate between the slender body PDE and the slender body approximation in terms of the fiber radius $\epsilon$ as $\epsilon\to0$. \\

In this paper, we address the validity of using slender body approximation for the \emph{slender body inverse problem}. Here the velocity of the fiber $\bm{u}^\SB_{\rm C}$ is given and we aim to find the 1D force density $\bm{f}$. This problems arises, for example, when one tries to infer the force generated by a filamentous microorganism given its shape dynamics \cite{nguyen2014computing}. This problem is also relevant when an inextensibility constraint must be imposed on a moving filament; the filament tension that ensures inextensibility can be found by solving a slender body inverse problem \cite{tornberg2004simulating}. From a PDE perspective, the slender body inverse problem is simply the Dirichlet problem for the Stokes equations with a restricted class of admissible boundary data, for which the existence, uniqueness, and regularity of solutions are well understood. However, from the perspective of slender body approximation, the solvability of the inverse problem is less clear. In particular, an analysis by G\"otz \cite{gotz2000interactions} showed that, in a simplified scenario, the operator mapping $\bm{f}$ to $\bu^\SB_{\rm C}$ is not necessarily invertible for the Keller--Rubinow--Johnson slender body approximation. We will recall this analysis and explore the issue in greater detail here. This non-invertibility means that the Keller--Rubinow--Johnson slender body approximation is not suitable for the inverse problem, and we are thus led to consider regularizations of their original expression. In particular, we will consider both spectral truncation and what we will term the \emph{$\delta$-regularization} approach of Tornberg and Shelley \cite{tornberg2004simulating}. Neither the validity of such approximations nor the choice of parameter $\delta$ has been clear. Our previous work on the slender body PDE provides the necessary framework to study these issues.\\

Here we carry out a complete spectral analysis of the slender body inverse problem in the case of a straight, periodic filament with constant radius $\epsilon>0$. This scenario provides intuition for more complicated fiber geometries and also corresponds to the model problem studied by G\"otz and others \cite{gotz2000interactions, shelley2000stokesian, tornberg2004simulating}. Moreover, in this model scenario, we can explicitly calculate the eigenvalues of the operator mapping the fiber velocity to the force density $\bm{f}$ for both the slender body PDE and the slender body approximation. This involves finding closed-form solutions to a family of inhomogeneous Bessel ODEs. We show that the eigenvalues of the slender body approximation agree closely with the slender body PDE at low wavenumbers, but deviate wildly beyond a threshold wavenumber of $O(1/\epsilon)$. This allows us to define a truncated slender body approximation, where frequencies above this threshold wavenumber are cut off, and for which we can derive a rigorous error bound with respect to the slender body PDE. If the prescribed fiber velocity $\bu$ belongs to $H^1$, we obtain an error bound for $\bm{f}$ proportional to $\epsilon$. If $\bu\in H^2$ or smoother, we obtain $\epsilon^2$ convergence. One of our main technical tools in deriving these estimates is a new bound on the ratio between modified Bessel functions. By similar methods, we show that the $\delta$-regularized approximation yields the same order of convergence to the slender body PDE without requiring truncation. In addition, we determine the dependence of this error estimate on the regularization parameter $\delta$, which provides a guideline for the choice of $\delta$. Before proving these results for the Stokes setting, we perform an analogous analysis for the Laplace setting, where the spectral calculations and estimates needed are much simpler. The Stokes setting relies on the same fundamental ideas but is much more technically involved. \\

The spectral picture obtained in this paper is of significance beyond its direct implications for the slender body inverse problem. In particular, our spectral calculation suggests the high mode relaxation behavior of thin elastic filaments in a Stokesian fluid, which will inform analysis and numerical analysis of slender body problems as well as algorithmic development for the numerical simulation of such problems.

\subsection{Slender body theory and slender body PDE}\label{sec:SBTandPDE}
Let $\X : \T\equiv \R / 2\Z \to \R^3$ be the coordinates of a closed curve in $\R^3$, parameterized by arclength $s$, and for $\epsilon>0$ define 
\[ \Sigma_\epsilon = \{ \bx \in \R^3 : \text{ dist}(\bx,\X(s)) < \epsilon \}, \]
a closed loop slender body with constant radius $\epsilon$ (see Figure \ref{fig:geom}). It is of interest in many practical applications \cite{dreyfus2005microscopic, hamalainen2011papermaking, lauga2009hydrodynamics, pak2011high, smith2011mathematical, spagnolie2011comparative, young2010dynamics} to describe the flow about $\Sigma_\epsilon$ when the fiber is immersed in a highly viscous fluid. The goal of slender body theory \cite{batchelor1970slender,  johnson1980improved, keller1976slender, lighthill1976flagellar} is to approximate the Stokes flow about $\Sigma_\epsilon$ when $\epsilon$ is small. \\

Slender body theory approximates the fluid velocity $\bu^\SB(\bx)$ at any $\bx$ away from $\X(s)$ as the flow in $\R^3$ due to a force density $\bm{f}(s)$, $s\in \T$, along the 1D fiber centerline. In particular, the fluid velocity is approximated by 
\begin{equation}\label{stokes_SB}
\begin{aligned}
8\pi \bu^{\SB}(\bx) &=\int_\T \bigg( \mc{S}(\bm{R})+\frac{\epsilon^2}{2}\mc{D}(\bm{R}) \bigg)\bm{f}(s') \, ds', \quad 
\bm{R}=\bm{x}-\bm{X}(s'); \\
\mc{S}(\bm{R})&=\frac{{\bf I}}{\abs{\bm{R}}}+\frac{\bm{R}\bm{R}^{\rm T}}{\abs{\bm{R}}^3}, \; 
\mc{D}(\bm{R})=\frac{{\bf I}}{\abs{\bm{R}}^3}-\frac{3\bm{R}\bm{R}^{\rm T}}{\abs{\bm{R}}^5},
\end{aligned}
\end{equation}
where $\frac{1}{8\pi}\mc{S}(\bm{R})$ is the Stokeslet, the free space Green's function for the Stokes equations in $\R^3$, and $\frac{1}{8\pi}\mc{D}(\bm{R})=\frac{1}{16\pi}\Delta\mc{S}(\bm{R})$ is the doublet, a higher order correction to the velocity approximation. The doublet term serves to enforce a \emph{fiber integrity condition}: to leading order in $\epsilon$, the velocity is constant across each cross section (fixed $s$) of $\Sigma_\epsilon$. \\


Now, the expression \eqref{stokes_SB} is singular at $\bx=\X(s)$ and thus only valid away from the filament centerline. A common method for obtaining an expression for the velocity of the fiber itself is to perform a matched asymptotic expansion about $\epsilon=0$ \cite{gotz2000interactions, keller1976slender, johnson1980improved, shelley2000stokesian, tornberg2004simulating}. This yields the following expression for the fiber velocity in the periodic setting: 
\begin{equation}\label{SBT_expr}
\begin{aligned}
\bu^\SB_{\rm C}(s) &= \mc{L}_\epsilon^\SB[\bm{f}](s) := \bm{\Lambda}[\bm{f}](s) + \bm{K}[\bm{f}](s),\\
\bm{\Lambda}[\bm{f}](s) &:=  \frac{1}{8\pi}\big[({\bf I}- 3\be_t\be_t^{\rm T})-2({\bf I}+\be_t\be_t^{\rm T}) \log(\pi\epsilon/8) \big]{\bm f}(s) \\
\bm{K}[\bm{f}](s) &:= \frac{1}{8\pi}\int_{\T} \left[ \left(\frac{{\bf I}}{|\bm{R}_0|}+ \frac{\bm{R}_0\bm{R}_0^{\rm T}}{|\bm{R}_0|^3}\right){\bm f}(s') - \frac{\pi}{2}\frac{{\bf I}+\be_t(s)\be_t(s)^{\rm T} }{|\sin (\pi(s-s')/2)|} {\bm f}(s)\right] \, ds'.
\end{aligned}
\end{equation}
Here $\be_t(s)$ is the unit tangent vector to $\X(s)$ and $\bm{R}_0(s,s') = \X(s) - \X(s')$. Note that the integral operator $\bm{K}$ only makes sense as a difference of the two terms.  \\

%
A natural question to then ask of slender body theory is how well does the expression \eqref{SBT_expr} approximate the actual motion of a thin fiber in Stokes flow? \\  

In \cite{closed_loop,free_ends}, Mori et al. develop the \emph{slender body PDE} \eqref{SB_PDE} to answer this question when the force density $\bm{f}$ is given and the slender body velocity is unknown. Let $p=p(\bx)$ denote the scalar-valued fluid pressure field, $\bm{\sigma}= \nabla\bu+(\nabla\bu)^{\rm T} - p{\bf I}$ the fluid stress tensor, and $\bm{\sigma}\bm{n}\big|_{\p\Sigma_\epsilon}$ the surface stress on $\p\Sigma_\epsilon$, where $\bm{n}(\bx)$ is the unit normal to $\bx\in \p\Sigma_\epsilon$. The fluid velocity field $\bu(\bx)$ about $\Sigma_\epsilon$ is described by the solution to the boundary value problem 
\begin{equation}\label{SB_PDE}
\begin{aligned}
-\Delta \bu +\nabla p &= 0, \quad \dive \, \bu = 0 \quad \text{ in } \Omega_\epsilon = \R^3\backslash \overline{\Sigma_\epsilon} \\
\int_0^{2\pi} (\bm{\sigma}\bm{n}) \mc{J}_\epsilon(s,\theta) \, d\theta &= \bm{f}(s) \hspace{2cm} \text{ on } \p\Sigma_\epsilon \\
\bu\big|_{\p\Sigma_\epsilon} &= \bu(s), \hspace{1.85cm} \text{ unknown but independent of }\theta \\
\abs{\bu} \to 0 & \text{ as } \abs{\bx} \to \infty.
\end{aligned}
\end{equation}
Here $\mathcal{J}_{\epsilon}(s,\theta)$ is the surface element on $\p\Sigma_\epsilon$. The velocity $\bu\big|_{\p\Sigma_\epsilon}$ of the slender body itself is unspecified but constrained to belong to the set 
\[ \A_\epsilon = \{ \bv \in X(\Omega_\epsilon) \, : \, \bv\big|_{\p\Sigma_\epsilon} = \bv(s) \}, \]
where $X(\Omega_\epsilon)$ is a suitable function space. As mentioned, this constant-on-cross-sections constraint for the slender body velocity is known as the fiber integrity condition and ensures that cross sections of $\Sigma_\epsilon$ maintain their circular shape and do not deform. Due to the fiber integrity condition, the slender body velocity $\bu(s)$ may be considered both as a function on $\p\Sigma_\epsilon$ and on $\T$. \\

\begin{figure}[!h]
\centering
\includegraphics[scale=0.6]{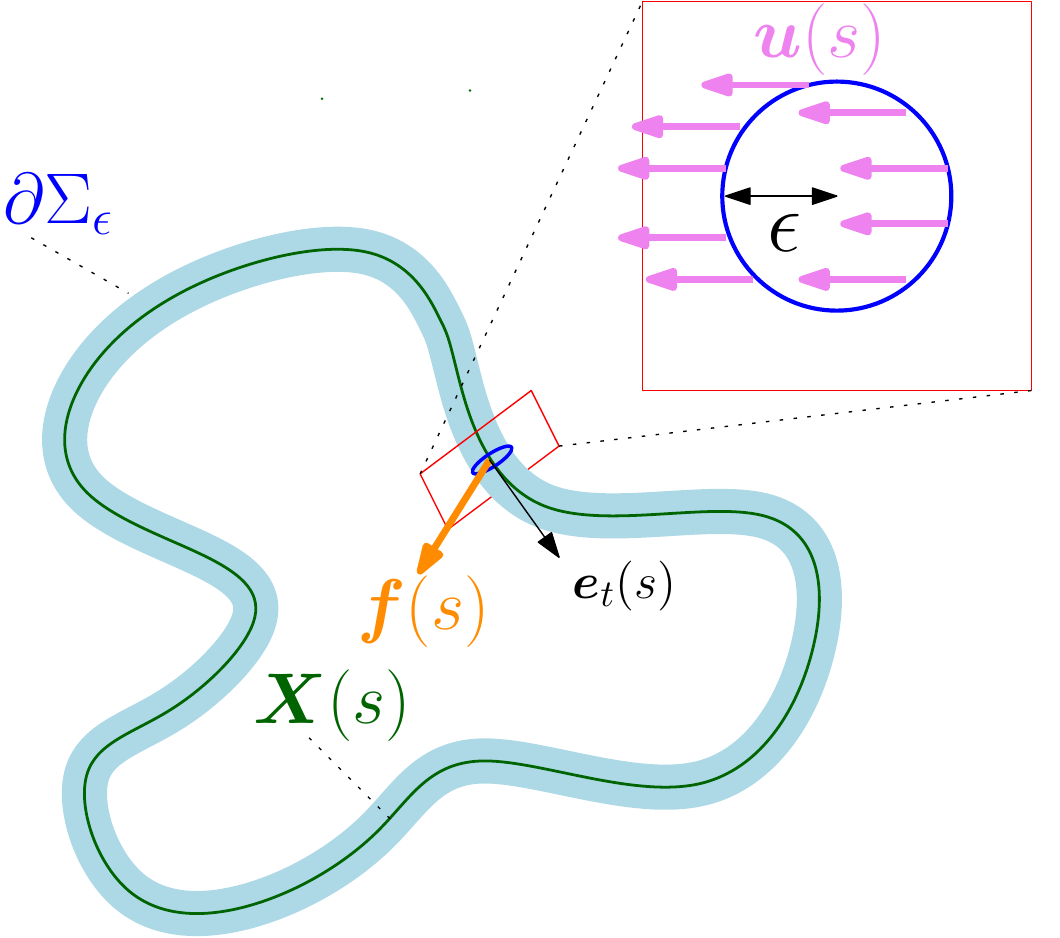} \hspace{1.5cm}
\includegraphics[scale=0.6]{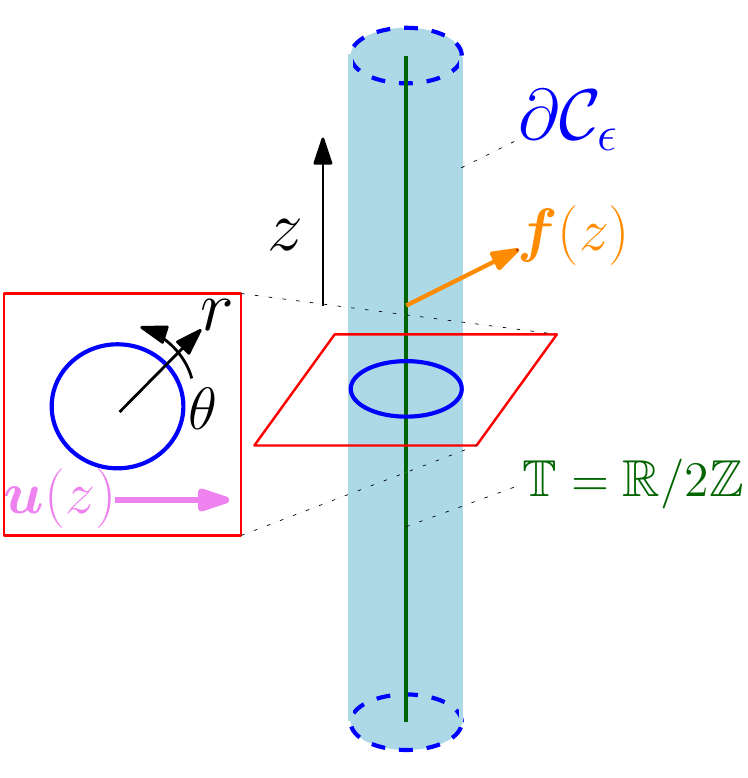}\\
\caption{In general, the slender body PDE \eqref{SB_PDE} is defined about a closed fiber $\Sigma_\epsilon$ (left). We are interested in the map $\mc{L}_\epsilon$ \eqref{Lmap}, defined along the 1D curve $\X(s)$. Here we will consider in detail the model case of a straight, periodic fiber $\Sigma_\epsilon=\mc{C}_\epsilon$ (right), since this will allow us to explicitly calculate the spectrum of $\mc{L}_\epsilon$. }
\label{fig:geom}
\end{figure}

Given a force density $\bm{f}(s)$, $s\in \T$, we may thus define the map 
\begin{equation}\label{Lmap}
 \mc{L}_\epsilon: \bm{f}(s) \mapsto \bu(s) 
 \end{equation}
by solving \eqref{SB_PDE} and measuring $\bu\big|_{\p\Sigma_\epsilon}$. In \cite{closed_loop,free_ends}, Mori et al. show that this map is well posed $L^2(\T)\to L^2(\T)$ with an $\epsilon$-dependent energy estimate. In particular, for $\bm{f}\in L^2(\T)$, we have
\begin{equation}\label{stokes_est}
\norm{ \mc{L}_\epsilon[\bm{f}]}_{L^2(\T)} \le C\abs{\log\epsilon}\norm{\bm{f}}_{L^2(\T)}.
\end{equation}
In fact, for fixed $\epsilon$, standard well-posedness theory for the Stokes equations along with the analysis in \cite{closed_loop,free_ends} can be used to show that this map is well-posed from $H^{-1/2}(\T)\to H^{1/2}(\T)$. However, well-posedness in these function spaces does not yield a similar $\epsilon$-dependent energy estimate to \eqref{stokes_est}, since the proof of $\abs{\log\epsilon}^{1/2}$ dependence relies on an $L^2$ bound for the trace of $\A_\epsilon$ functions which does not have an $H^{1/2}$ counterpart. Note, however, that the map $\mc{L}_\epsilon$ is then compact $L^2(\T)\to L^2(\T)$. \\

In \cite{closed_loop,free_ends}, Mori et al. study the slender body approximation $\mc{L}_\epsilon^\SB$ to the operator $\mc{L}_\epsilon$, where $\mc{L}_\epsilon^\SB[\bm{f}](s)$ is given by the formula \eqref{SBT_expr}. Under mild regularity assumptions on $\Sigma_\epsilon$ and on $\bm{f}$, it is shown that 
\begin{equation}\label{forward_err}
 \norm{ \mc{L}_\epsilon^\SB[\bm{f}]-\mc{L}_\epsilon[\bm{f}]}_{L^2(\T)} \le C\epsilon\abs{\log\epsilon}^{3/2} \norm{\bm{f}}_{C^1(\T)} .
 \end{equation}
 
However, what if we are instead given the velocity $\bu^\SB_{\rm C}(s)=\overline \bu(s)$ of the slender body and wish to solve for the force density along $\Sigma_\epsilon$? It seems natural to try to use \eqref{SBT_expr} to approximate $\mc{L}_\epsilon^{-1}[\overline \bu](s)$, since the sum $\bm{\Lambda}+\bm{K}$ looks similar to a second-kind Fredholm integral equation for $\bm{f}(s)$. However, as noted in various papers \cite{gotz2000interactions,shelley2000stokesian,tornberg2004simulating}, this is not the case, as the operator $\bm{K}$ is not compact. Moreover, in simple geometries, the spectrum of $\bm{\Lambda}+\bm{K}$ changes sign at finite wavenumber. Thus equation \eqref{SBT_expr} alone is not suitable for approximating solutions to the slender body inverse problem. \\ 

In this paper we consider the validity of using two different regularizations of the slender body approximation \eqref{SBT_expr} to approximate $\mc{L}_\epsilon^{-1}$. We consider spectral truncation and the $\delta$-regularization technique of Tornberg and Shelley \cite{tornberg2004simulating}, both of which correct for the invertibility issues in \eqref{SBT_expr}. We aim to compare these regularizations to the slender body PDE solution via a detailed study of the spectrum of $\mc{L}_\epsilon^{-1}$.  \\

For a general periodic slender body $\Sigma_\epsilon$, we may think of the map $\mc{L}_\epsilon^{-1}$ as follows. Given $\theta$-independent velocity data $\overline \bu(s)\in H^{1/2}(\T)$, let $(\bu,p)$ be the unique solution to the corresponding Stokes Dirichlet boundary value problem. The existence, uniqueness, and regularity properties of solutions to the Stokes Dirichlet problem are well-studied (see \cite{galdi2011introduction, boyer2012mathematical} for an in-depth treatment). Using the solution $(\bu,p)$, we may compute the surface stress $\bm{\sigma}\bm{n}$ on $\p\Sigma_\epsilon$. Due to the standard regularity properties of the Stokes equations, we have $\bm{\sigma}\bm{n}\in H^{-1/2}(\p\Sigma_\epsilon)$. The Dirichlet-to-Neumann operator $\mc{L}^{-1}$ mapping the boundary value $\overline{\bu}(s)$ to the corresponding surface stress $\bm{\sigma}\bm{n}$ may then be defined:
\begin{equation}\label{usual_DTN}
\begin{aligned}
\mc{L}^{-1} : H^{1/2}(\Gamma_\epsilon) &\to H^{-1/2}(\Gamma_\epsilon); \\
\mc{L}^{-1}[\overline{\bu}](s) &= \bm{\sigma} {\bm n}.
\end{aligned}
\end{equation}

The inverse slender body PDE map $\mc{L}_\epsilon^{-1}$ may be considered as the weighted $\theta$-integral of the Dirichlet-to-Neumann map \eqref{usual_DTN} over each fiber cross section, where the weight is given by the surface element $\mc{J}_\epsilon(s,\theta)$ on $\p\Sigma_\epsilon$. We have 
\begin{equation}\label{avg_DTN}
\mc{L}_\epsilon^{-1} : H^{1/2}(\T) \to H^{-1/2}(\T); \quad \mc{L}_\epsilon^{-1} [\overline \bu](s) = \bm{f}(s),
\end{equation}
where the formula for $\bm{f}(s)$ is as in \eqref{SB_PDE}. Here, given any $\overline\varphi\in H^{1/2}(\T)$, we understand this $\bm{f}\in H^{-1/2}(\T)$ by the dual pairing
\[ \langle \bm{f}, \overline\varphi \rangle_{H^{-1/2}(\T),H^{1/2}(\T)} = \int_{\Omega_\epsilon} \bm{\sigma} : \nabla \varphi \, d\bx, \]
where $\varphi(\bx)$ denotes the $H^1$ extension of the boundary value $\overline\varphi(s)$ into $\Omega_\epsilon$. \\

%
We may note some general properties of the spectrum of $\mc{L}_\epsilon$. In addition to being compact $L^2(\T)\to L^2(\T)$, the operator $\mc{L}_\epsilon$ is self adjoint, which can be seen as follows. Let $\bm{f}$, $\bm{g}\in L^2(\T)$ and let $\bu$, $\bu_g$ be the solutions to the slender body PDE \eqref{SB_PDE} with $\bm{f}$ and $\bm{g}$ as force data, respectively. Then, using Definition 2.1 in \cite{closed_loop} of a weak solution to the slender body PDE, we have 
\begin{align*}
\int_{\T} \big( \mc{L}_\epsilon[ \bm{f}](s) \big) \bm{g}(s) \, ds &= \int_{\Omega_\epsilon} 2\, \E(\bu):\E(\bu_g) \, d\bx = \int_{\T} \bm{f}(s) \big(\mc{L}_\epsilon[\bm{g}](s) \big) \, ds.
\end{align*}

Here $\E(\bu)=\frac{1}{2}(\nabla \bu+(\nabla\bu)^{\rm T})$ denotes the symmetric gradient. By the spectral theorem for compact, self-adjoint operators, the operator $\mc{L}_\epsilon$ admits a countable basis of orthonormal eigenvectors $\{\psi_j \}$ with corresponding eigenvalues $\{\mu_j\}\subset \R$ satisfying $\mu_j\to 0$. Thus $\mc{L}_\epsilon^{-1}$ admits real-valued eigenvalues $\lambda_j \to \infty$ along with a basis of orthonormal eigenvectors $\{\phi_j\}$. \\

To find the eigenvalues of $\mc{L}_\epsilon^{-1}$, we look for $\lambda$ satisfying
 \begin{equation}\label{eigenval_prob}
\begin{aligned}
-\Delta \bu +\nabla p &= 0, \quad \dive \, \bu = 0 \hspace{.5cm} \text{ in } \Omega_{\epsilon} \\
 \int_0^{2\pi} (\bm{\sigma} {\bm n}) \, \mathcal{J}_{\epsilon}(s,\theta) \, d\theta &= \lambda\bu(s) \hspace{1.85cm} \text{ on }\Gamma_\epsilon \\
\abs{\bu} \to 0 & \text{ as } \abs{\bx} \to \infty.
\end{aligned}
\end{equation}

%

Since \eqref{SB_PDE} is uniquely solvable, \eqref{eigenval_prob} has no zero eigenvalue, i.e. $\mc{L}_\epsilon^{-1}$ is positive definite. 
In general, the behavior of the eigenvalues of $\mc{L}_\epsilon^{-1}$ can be calculated via the Rayleigh quotient 
\begin{equation}\label{rayleigh}
\lambda_k(\Omega_\epsilon) = \min_{\bu\in \A_\epsilon; \, \dive\, \bu =0} \bigg\{ \frac{\int_{\Omega_\epsilon}2\abs{\E(\bu)}^2 d\bx}{\int_{\T}\abs{\bu(s)}^2 ds} \, : \, \int_{\T} \bu(s)\cdot\bm{\phi}_j(s) = 0, \, j = 0,\dots,k-1 \bigg\}
\end{equation}
where $\bm{\phi}_j(s)$ is the $j^{\text{th}}$ eigenvector for $\mc{L}_\epsilon^{-1}$. From now on we consider only the $k\neq 0$ modes, as the $k=0$ mode corresponds to the fundamental solution in 2D, which leads to logarithmic growth of the velocity field at spatial infinity. \\

It is difficult to say much more in general about the eigenvalue problems \eqref{eigenval_prob} and \eqref{SBlap_BVP}. For the remainder of this paper, we will consider the map $\mc{L}_\epsilon^{-1}$ when $\Sigma_\epsilon$ is a straight, periodic cylinder (see Figure \ref{fig:geom}), which we denote in cylindrical coordinates $(r,\theta,z)$ by 
\[ \mc{C}_\epsilon = \{(r,\theta,z) \, : \, 0\le r\le \epsilon, \, 0\le \theta<2\pi, \,  z \in \R/2\Z \}. \]
This nonphysical geometry is nevertheless useful for developing intuition for the eigenvalue problems \eqref{eigenval_prob} and \eqref{SBlap_BVP} in more general settings. Indeed, the case $\Sigma_\epsilon=\mc{C}_\epsilon$ is especially interesting because we know the eigenfunctions explicitly. Noting that the eigenvalue problem \eqref{eigenval_prob} may be decomposed into velocity fields in the directions purely tangential ($\be_z$) and normal ($\be_x$ and $\be_y$) to the fiber centerline, the eigenfunctions in each of these directions are given by $e^{i\pi k z}\be_j$, $j=x,y,z$, $\abs{k}=1,2,3,\dots$. We are then able to calculate the full spectrum of $\mc{L}_\epsilon^{-1}$, which we state in Section \ref{main_results}.  \\

This explicit spectral information is especially useful because we can directly compare the eigenvalues of $\mc{L}_\epsilon^{-1}$ to the eigenvalues of the slender body approximation \eqref{SBT_expr}. In particular, the spectrum of the forward operator $\mc{L}^\SB_\epsilon$ about $\mc{C}_\epsilon$ has been studied in detail in \cite{gotz2000interactions,shelley2000stokesian,tornberg2004simulating}. We recall the form of the eigenvalues of $\mc{L}^\SB_\epsilon$ in Proposition \ref{KRop_eigs}. Note that since we are actually interested in approximating $\mc{L}_\epsilon^{-1}$, we denote the spectrum of $\mc{L}^\SB_\epsilon$ by $1/\lambda_k$. Further note that the eigenvalue problem for \eqref{SBT_expr} can be decomposed into tangential ($\be_z$) and normal ($\be_x$ and $\be_y$) directions, yielding two different eigenvalue expressions. 
\begin{proposition}\label{KRop_eigs}
%
The eigenvalues of the Stokes slender body approximation $\mc{L}^\SB_\epsilon$ \eqref{SBT_expr} along $\mc{C}_\epsilon$ in the tangential ($\be_z$) and normal directions ($\be_x$ and $\be_y$), respectively, are given by 
\begin{align}
\label{KR_eigs_St}
\frac{1}{\lambda^{\SB,{\rm t}}_k} &= -\frac{1}{4\pi} \big(1 + 2\log(\pi\epsilon\abs{k}/2) + 2\gamma \big) \\
\label{KR_eigs_Sn}
\frac{1}{\lambda^{\SB,{\rm n}}_k} &= \frac{1}{8\pi} \big( 1 - 2\log(\pi\epsilon\abs{k}/2) - 2\gamma \big) .
\end{align}
Here $\gamma\approx 0.5772$ is the Euler gamma.
\end{proposition}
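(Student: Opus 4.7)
The plan is to exploit the symmetries of the straight periodic geometry to diagonalize $\mc{L}_\epsilon^\SB$ explicitly in a Fourier basis, then evaluate the resulting one-dimensional integrals in closed form.

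First I would record the geometric specializations: with $\X(s) = s\be_z$ on $\T = \R/2\Z$, we have $\be_t(s) \equiv \be_z$, $\bm{R}_0(s,s') = (s-s')\be_z$, and $\bm{R}_0\bm{R}_0^{\rm T}/|\bm{R}_0|^3 = \be_z\be_z^{\rm T}/|s-s'|$. Consequently $\mc{L}_\epsilon^\SB$ is translation-invariant in $s$ and commutes with rotations about $\be_z$, so it is block-diagonal with respect to the splitting $\R^3 = \mathrm{span}\{\be_z\}\oplus\mathrm{span}\{\be_x,\be_y\}$ and diagonalized within each block by the Fourier modes $e^{i\pi k s}\be_j$ for $j\in\{x,y,z\}$ and $k \in \Z\setminus\{0\}$. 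It remains only to evaluate $1/\lambda_k^{\SB,{\rm t}}$ and $1/\lambda_k^{\SB,{\rm n}}$ by testing against these eigenvectors.

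Second, I would compute the local contribution. With $\be_t = \be_z$ one has $({\bf I} - 3\be_z\be_z^{\rm T})\be_z = -2\be_z$, $({\bf I} + \be_z\be_z^{\rm T})\be_z = 2\be_z$, $({\bf I} - 3\be_z\be_z^{\rm T})\be_{x,y} = \be_{x,y}$, and $({\bf I} + \be_z\be_z^{\rm T})\be_{x,y} = \be_{x,y}$. Therefore
\[
\bm{\Lambda}[e^{i\pi k s}\be_z] = -\frac{1+2\log(\pi\epsilon/8)}{4\pi}\, e^{i\pi k s}\be_z, \qquad \bm{\Lambda}[e^{i\pi k s}\be_{x,y}] = \frac{1-2\log(\pi\epsilon/8)}{8\pi}\, e^{i\pi k s}\be_{x,y}.
\]

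Third, I would handle the integral operator $\bm{K}$. Substituting $u = s-s'$ and using the evenness of $|u|$ and $|\sin(\pi u/2)|$, the exponentials factor as $e^{i\pi k s}$ and the imaginary part of $e^{-i\pi ku}$ integrates to zero. This reduces both eigenvalue computations to a single scalar integral
\[
I(k) \;=\; \int_0^1\!\Bigl[\frac{\cos(\pi k u)}{u} - \frac{\pi/2}{\sin(\pi u/2)}\Bigr]\,du,
\]
multiplied by the appropriate block-diagonal factor ($4$ in the tangential direction, $2$ in the normal direction). I would evaluate $I(k)$ by splitting it into $\int_0^1 (\cos(\pi k u)-1)/u\,du$, which becomes $\mathrm{Ci}(\pi|k|) - \gamma - \log(\pi|k|)$ after setting $v = \pi|k|u$ and invoking the standard identity for the cosine integral, plus $\int_0^1 [1/u - (\pi/2)/\sin(\pi u/2)]\,du$, which evaluates in closed form via the substitution $w = \pi u/2$ and the antiderivative $\log(\tan(w/2)/w)$ to give $\log(\pi/4)$. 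Summing the contributions yields (modulo the rapidly vanishing $\mathrm{Ci}(\pi|k|)$ correction) an expression of the form $-2\log(4|k|) - 2\gamma$.

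Fourth, I would combine the local and nonlocal eigenvalues and collapse logarithms via $\log(\pi\epsilon/8) + \log(4|k|) = \log(\pi\epsilon|k|/2)$. The tangential computation produces the factor $-1/(4\pi)$ out front, while the normal computation produces $+1/(8\pi)$ out front together with the opposite sign on the log (since $({\bf I}-3\be_z\be_z^{\rm T})$ contributes $+1$ rather than $-2$), matching \eqref{KR_eigs_St} and \eqref{KR_eigs_Sn}. The main obstacle is bookkeeping in step three: the Stokeslet kernel $1/|s-s'|$ and the subtraction $(\pi/2)/|\sin(\pi(s-s')/2)|$ are each singular and only their difference is integrable, so one must separate the singular and logarithmic contributions with care, and verify that the $\mathrm{Ci}(\pi|k|)$ piece is either absent (under the convention used for the periodic Stokeslet in \cite{gotz2000interactions,shelley2000stokesian,tornberg2004simulating}) or is absorbed consistently with the exact formulas claimed.
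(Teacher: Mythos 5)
Your proposal is correct in substance and lands on \eqref{KR_eigs_St}--\eqref{KR_eigs_Sn} with the right constants, but it takes a genuinely different route for the key scalar step. You and the paper share the same first reduction: on $\mc{C}_\epsilon$ the matrices $({\bf I}-3\be_z\be_z^{\rm T})$ and $({\bf I}+\be_z\be_z^{\rm T})$ act as scalars on $\be_z$ and on $\be_x,\be_y$, so the Stokes operator block-diagonalizes and everything reduces to one scalar (Laplace-type) Fourier multiplier; your local-term coefficients and the factors $4$ vs.\ $2$ are exactly right. Where you differ is in evaluating that multiplier: the paper (Appendix \ref{sec:gotz}) recalls the Tuck--G\"otz Legendre-polynomial diagonalization of the $S$-transform, then invokes Shelley--Ueda's exact periodic eigenvalues $\mu_k^{\rm per}=4\sum_{j=1}^{\abs{k}}\frac{1}{2j-1}$ of the $\sin$-kernel operator in \eqref{gotz2}, and finally replaces the finite sum by its harmonic-number asymptotics $2\log(4\abs{k})+2\gamma$, explicitly stating that all subsequent analysis uses the resulting asymptotic formula \eqref{SBT_spec_per}. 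You instead compute the Fourier multiplier of the convolution kernel in \eqref{SBT_expr} directly, splitting off $\int_0^1(\cos(\pi k u)-1)/u\,du=\mathrm{Ci}(\pi\abs{k})-\gamma-\log(\pi\abs{k})$ and using the closed-form identity giving $\log(\pi/4)$ (the paper's \eqref{periodic}); both integral evaluations check out. The residual $\mathrm{Ci}(\pi\abs{k})$ term you flag is not a gap: it is precisely the analogue of the paper's discrepancy between the exact sum $\mu_k^{\rm per}$ and its $\log+\gamma$ asymptote (and of the $k$-dependence hidden in passing from the free-space to the periodized kernel), decaying like $\abs{k}^{-2}$ at integer $k$, and the Proposition's formulas are adopted in the paper in exactly this asymptotic sense. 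What your route buys is a self-contained, elementary derivation with an explicit handle on the size of the neglected correction; what the paper's route buys is the connection to the known nonperiodic Legendre spectrum and the exact periodic sum formula from the literature.
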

We briefly reiterate the derivation of Proposition \ref{KRop_eigs} in Appendix \ref{sec:gotz}. \\

Immediately from Proposition \ref{KRop_eigs} we note some strong disagreement between the spectrum of $\mc{L}^\SB_\epsilon$ about $\mc{C}_\epsilon$ and the general spectral behavior of $\mc{L}_\epsilon$. In particular, in both cases $q={\rm t}$, ${\rm n}$, we see that $1/\lambda^{\SB,q}_k$ begins positive and crosses 0 for some $\abs{k}\sim 1/\epsilon$, which means that $\lambda^{\SB,q}_k$ blows up and jumps to negative values at finite $k$. \\

For this reason, we consider two forms of regularization for $\mc{L}^\SB_\epsilon$ \eqref{SBT_expr}. First, we consider truncation of the Fourier series for $\mc{L}^\SB_\epsilon$. 
%
Due to the distinct behavior in the tangential ($\be_z$) and normal ($\be_x$ and $\be_y$) directions, we distinguish between the $\be_z$ and $\be_x$, $\be_y$ directions in defining a truncated operator. For vector-valued $\bv$, we write $\bv= v_x\be_x+v_y\be_y +v_z\be_z$. We denote the Fourier coefficients of $\bv$ on $\T$ by $\wh \bv_k = \wh v_{x,k}\be_x+ \wh v_{y,k}\be_y + \wh v_{z,k}\be_z$. For $\bv\in L^2(\T)$, we then define the truncated slender body approximation $(\mc{L}_\epsilon^\SB)^{-1}_{NM}$ by
\begin{equation}\label{LambdaNM}
(\mc{L}_\epsilon^\SB)^{-1}_{NM} [\bv](z) := \sum_{\abs{k}=1}^N \lambda^{\rm t}_k \wh v_{z,k} e^{i \pi k z} \be_z + \sum_{\abs{k}=1}^M \lambda^{\rm n}_k (\wh v_{x,k} \be_x + \wh v_{y,k}\be_y) e^{i \pi k z} 
\end{equation}
for some $N$, $M<\infty$. In \eqref{LambdaN} and \eqref{LambdaNM}, the idea is to choose $N$ and $M$ well below the threshold wavenumber for blowup in $\lambda^{\SB,q}_k$, $q={\rm l}$, ${\rm t}$, ${\rm n}$. We are then able to show an error estimate between the truncated slender body approximation and the slender body PDE operator $\mc{L}_\epsilon^{-1}$, which we state in Section \ref{main_results}.  \\

Second, we consider what we will term the \emph{$\delta$-regularized slender body approximation}, proposed by Tornberg and Shelley \cite{tornberg2004simulating} and Shelley and Ueda \cite{shelley2000stokesian}. The idea behind $\delta$-regularization is to replace the integral kernel of \eqref{SBT_expr}, which is singular at $s=s'$, with a smooth kernel that is instead proportional to $\epsilon$ at $s=s'$. In the case $\Sigma_\epsilon=\mc{C}_\epsilon$, this is accomplished by using the relation 
\begin{equation}\label{periodic}
 \int_{-1}^1 \bigg(\frac{\pi}{\abs{2\sin(\pi z/2)}} - \frac{1}{\abs{z}} \bigg) \, dz = -2\log(\pi/4), 
 \end{equation} 
and adding a regularization $\delta\epsilon$ to the denominator of the integral terms of \eqref{SBT_expr}, where $\delta>0$ is a constant. With respect to the Cartesian basis $\be_x$, $\be_y$, $\be_z$, the $\delta$-regularized slender body approximation about $\mc{C}_\epsilon$ is given by
\begin{equation}\label{KR_reg}
\begin{aligned}
\bu^\SB_{\rm C}(z) &= \mc{L}_\epsilon^\delta[f](z) := \bm{\Lambda}_\delta[\bm{f}](z) + \bm{K}_\delta[\bm{f}](z), \\
\bm{\Lambda}_\delta[\bm{f}](z) &:=\frac{1}{8\pi}\bigg(({\bf I} -3\be_z\be_z^{\rm T}) + 2\log(\delta)({\bf I}+\be_z\be_z^{\rm T})\bigg)\bm{f}(z) \\
\bm{K}_\delta[\bm{f}](z) &= ({\bf I}+\be_z\be_z^{\rm T}) \int_{-1}^1 \frac{\bm{f}(z')}{\sqrt{(z-z')^2+\delta^2\epsilon^2} } \, dz'.
\end{aligned}
\end{equation}
Since $\mc{C}_\epsilon$ is periodic, we consider here the periodization of the integral operator $\bm{K}_\delta$. The form of \eqref{KR_reg} also uses that the second integral term in \eqref{SBT_expr} can be integrated up to $O(\epsilon^2)$ errors to nearly cancel the logarithmic term in $\bm{\Lambda}$, leaving only $\log(\delta)$. The idea is to then choose $\delta$ such that $\mc{L}_\epsilon^\delta$ is better suited for inversion and potentially provides a closer approximation of the slender body PDE operator than \eqref{SBT_expr} and \eqref{gotz2}. In particular, as verified in Lemma \ref{diff_REGS}, $\delta> \sqrt{e}$ is sufficient for removing the invertibility issues of \eqref{SBT_expr} in the case $\Sigma_\epsilon=\mc{C}_\epsilon$, yielding an actual second-kind Fredholm integral equation.    
We may then obtain an error estimate between the $\delta$-regularized slender body approximation and the slender body PDE operator $\mc{L}_\epsilon^{-1}$. We state our main results in Section \ref{main_results}.

\begin{remark}
The formula \eqref{KR_reg} differs from the expression derived from the Method of Regularized Stokeslets in \cite{cortez2012slender} only by the form of the logarithmic term in $\bm{\Lambda}_\delta$, which uses $-\log(\sqrt{\delta^2+1}/\delta)$ in place of $\log\delta$. Here we show that $\log\delta$ yields the correct low wavenumber behavior, indicating that \eqref{KR_reg} has the correct form of $\bm{\Lambda}_\delta$.  
\end{remark}


\subsection{Main results}\label{main_results}
Here we state the main results comparing the slender body PDE operator $\mc{L}_\epsilon^{-1}$ to its approximations $(\mc{L}_\epsilon^\SB)_{NM}^{-1}$ and $(\mc{L}^\delta_\epsilon)^{-1}$. Each of the results presented here has a simpler analogue in the Laplace setting, which we will introduce in Section \ref{sec:SBLap}. We include the Laplace analysis later as a simpler blueprint of the arguments needed to show the following results for the Stokes setting. \\
A crucial component of our analysis is the ability to calculate closed-form expressions for the eigenvalues of $\mc{L}_\epsilon^{-1}$ about $\mc{C}_\epsilon$.  
Recalling that the eigenvalue problem \eqref{eigenval_prob} decouples into the tangential ($\be_z$) and normal ($\be_x$ and $\be_y$) directions, we obtain the following eigenvalue expressions.
\begin{proposition}[Stokes PDE spectrum]\label{Stokes_spec}
Let $(\mc{L}_\epsilon)^{-1}$ be as in \eqref{avg_DTN} for $\Sigma_\epsilon=\mc{C}_\epsilon$. The eigenvalues $\lambda^{\rm t}_k$ and $\lambda^{\rm n}_k$ of $(\mc{L}_\epsilon)^{-1}$ in the tangential and normal directions, respectively, are given by 
\begin{align}
\label{eigsT}
\lambda^{\rm t}_k &= \frac{ 4\pi^2\epsilon \abs{k} K_1^2}{2K_0K_1 + \pi\epsilon \abs{k} \big( K_0^2 - K_1^2 \big) } \\
\label{eigsN}
\lambda^{\rm n}_k &= 
2\pi^2\epsilon \abs{k}\frac{4K_1^2K_2+\pi\epsilon \abs{k} K_1(K_1^2-K_0K_2)}{2K_0K_1K_2 + \pi\epsilon \abs{k} \big(K_1^2(K_0+K_2)-2K_0^2K_2 \big)}
\end{align}
where each $K_j=K_j(\pi\epsilon \abs{k})$, $j=0,1,2$, is a $j^{\text th}$ order modified Bessel function of the second kind. Furthermore, the growth of $\lambda^{\rm t}_k$ and $\lambda^{\rm n}_k$ satisfies the bounds 
\begin{align}
\label{Tgrowth}
4\pi^2\epsilon \abs{k} &< \lambda^{\rm t}_k < 4\pi^2\epsilon \abs{k} + 2\pi \\
\label{Ngrowth}
3\pi^2\epsilon \abs{k} &< \lambda^{\rm n}_k < 3\pi^2\epsilon \abs{k} + 3\pi 
\end{align}
for $\abs{k}=1,2,3,\dots$.
\end{proposition}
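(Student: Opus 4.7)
The plan is to solve the Stokes boundary value problem on $\Omega_\epsilon = \R^3\setminus\overline{\mc{C}_\epsilon}$ explicitly for each candidate eigenfunction of $\mc{L}_\epsilon^{-1}$ and then extract the eigenvalue from the cross-sectional integral of the surface traction. Since the candidate eigenfunctions are $e^{i\pi k z}\be_j$ for $j\in\{x,y,z\}$ and $\mc{C}_\epsilon$ is rotationally symmetric about the $z$-axis, the problem decouples into a tangential case ($j=z$, axisymmetric) and a normal case ($j\in\{x,y\}$, first azimuthal mode). Decay at infinity selects the modified Bessel functions $K_n(\pi|k|r)$ of the second kind.

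For the tangential case, set $k'=\pi k$ and take $p=\alpha K_0(|k'|r)e^{ik'z}$, which is automatically harmonic. Using the identity $\Delta_{k'}(rK_1(|k'|r))=-2|k'|K_0(|k'|r)$ to identify a particular solution of the $z$-momentum equation, one obtains
\[
u_z=\bigl(\beta K_0(|k'|r)-\tfrac{i}{2}\mathrm{sgn}(k')\alpha\, rK_1(|k'|r)\bigr)e^{ik'z},
\]
and $u_r$ is then fixed by incompressibility via $(xK_1(x))'=-xK_0(x)$, yielding $u_r=\bigl(i\,\mathrm{sgn}(k')\beta K_1(|k'|r)+\tfrac{\alpha}{2}rK_2(|k'|r)\bigr)e^{ik'z}$. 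Imposing $u_r(\epsilon)=0$ and $u_z(\epsilon)=1$ gives a $2\times 2$ linear system in $(\alpha,\beta)$ whose determinant is proportional to $K_0K_2-K_1^2$ (with $K_n=K_n(\pi\epsilon|k|)$). The eigenvalue is then read off from the integral of $\sigma_{rz}|_{r=\epsilon}=\partial_r u_z(\epsilon)$ over the cross section, with sign chosen to match the convention that $\bm{f}$ is the force of the fiber on the fluid. A short calculation using $K_1'(x)=-K_0(x)-K_1(x)/x$ gives the compact form $\lambda^{\rm t}_k=4\pi K_1^2/(K_0K_2-K_1^2)$, which is equivalent to \eqref{eigsT} after one application of the recurrence $K_2=K_0+2K_1/x$.

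The normal case follows the same template, but with $e^{\pm i\theta}$ azimuthal dependence. The pressure ansatz is $p=\alpha K_1(|k'|r)\cos\theta e^{ik'z}$, and the three cylindrical velocity components couple through the Stokes and incompressibility equations; the homogeneous and pressure-driven particular solutions jointly involve $K_0$, $K_1$, and $K_2$. Three unknown coefficients are fixed by the three no-slip conditions $u_r|_\epsilon=\cos\theta$, $u_\theta|_\epsilon=-\sin\theta$, $u_z|_\epsilon=0$. Computing the surface traction, integrating its $\be_x$-component against $\cos\theta$ around the cross section, and repeatedly applying $K_{n+1}(x)=K_{n-1}(x)+2nK_n(x)/x$ and $K_n'(x)=-\tfrac{1}{2}(K_{n-1}(x)+K_{n+1}(x))$ reduces the resulting ratio of $3\times 3$ determinants to \eqref{eigsN}. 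This is the most algebraically involved step of the derivation.

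Once the explicit formulas are in hand, the growth bounds \eqref{Tgrowth} and \eqref{Ngrowth} reduce to sharp two-sided estimates on ratios of modified Bessel functions. For instance, with $x=\pi\epsilon|k|$ the compact form $\lambda^{\rm t}_k=4\pi/(K_0K_2/K_1^2-1)$ makes \eqref{Tgrowth} equivalent to the refined Tur\'an-type bound
\[
1+\frac{2}{2x+1}\;<\;\frac{K_0(x)K_2(x)}{K_1(x)^2}\;<\;1+\frac{1}{x}\qquad\text{for all } x>0.
\]
The classical strict inequality $K_0K_2>K_1^2$ is not quantitative enough; the sharper inequalities above are precisely the \emph{new bound on the ratio between modified Bessel functions} advertised in the introduction. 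I would prove them using the integral representation $K_n(x)=\int_0^\infty e^{-x\cosh t}\cosh(nt)\,dt$, applying Cauchy--Schwarz for one side and a monotonicity argument for the function $x\mapsto x(K_0K_2-K_1^2)/K_1^2$ on the other, anchored by the asymptotics $K_n(x)\sim\sqrt{\pi/(2x)}e^{-x}$ as $x\to\infty$ and the small-$x$ expansions. The analogous analysis controls \eqref{eigsN} through its cross terms $K_1^2-K_0K_2$ and $K_1^2(K_0+K_2)-2K_0^2K_2$. Establishing these Bessel ratio bounds uniformly in $x>0$ is the main obstacle; by contrast, the algebra producing \eqref{eigsT} and \eqref{eigsN} themselves, although tedious, is routine.
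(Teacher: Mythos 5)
Your derivation of the formulas \eqref{eigsT} and \eqref{eigsN} is essentially the paper's: an explicit solution of the exterior Stokes problem in cylindrical coordinates with decaying modified Bessel functions, boundary conditions at $r=\epsilon$, and the cross-sectional traction integral (Appendix \ref{app_eval} of the paper; the paper keeps the pressure amplitude as a free constant and imposes incompressibility last, whereas you build incompressibility in first, but the two parametrizations are equivalent, and your compact form $\lambda^{\rm t}_k=4\pi K_1^2/(K_0K_2-K_1^2)$ reduces to \eqref{eigsT} via $K_2=K_0+2K_1/x$). Your reduction of \eqref{Tgrowth} to the two-sided bound $1+\tfrac{2}{2x+1}<K_0K_2/K_1^2<1+\tfrac1x$ is also correct, and it is in fact equivalent (again via the recurrence) to the paper's key Lemma \ref{lem:bessel} on $K_1/K_0$.

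The genuine gap is that you never prove these bounds, and your sketched route is not obviously viable, while this is precisely where the real work of the proposition lies. The inequalities are sharp through second order in the large-$x$ asymptotics, so soft arguments do not close them: the standard Cauchy--Schwarz splitting of $K_1(x)=\int_0^\infty e^{-x\cosh t}\cosh t\,dt$ gives only $K_1^2\le K_0(K_0+K_2)/2$, i.e.\ the qualitative Tur\'an inequality, and upgrading it to $K_0K_2/K_1^2\ge 1+\tfrac{2}{2x+1}$ requires yet another sharp ratio bound (on $K_0/K_2$) of the same delicacy; likewise the claimed monotonicity of $x\mapsto x(K_0K_2-K_1^2)/K_1^2$ is an unproven assertion whose proof would be comparable in difficulty to the target inequality itself. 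The paper instead proves the equivalent statement by a Riccati/ODE comparison for $B(z)=zK_1(z)/K_0(z)$ (the ODE $B'=(B^2-z^2)/z$ plus sub/supersolutions), which is the actual mechanism you would need to supply. Moreover, the normal-direction bound \eqref{Ngrowth} does not follow by ``analogous analysis'': because $B_{\rm n}$ is a ratio of cubics in $K_0/K_1$ with the cross terms you list, the paper needs a separate, lengthy argument (Appendix \ref{app_nor_spec}), rewriting everything in terms of $A=K_0/K_1$, inserting the two-sided bound of Lemma \ref{lem:bessel}, and verifying positivity of high-degree polynomial expressions on several ranges of $z$. Your one-sentence treatment leaves that half of Proposition \ref{Stokes_spec} unestablished.
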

The calculation of $\lambda^{\rm t}_k$ and $\lambda^{\rm n}_k$ as well as the proofs of the growth bounds \eqref{Tgrowth} and
\eqref{Ngrowth} appear in Appendix \ref{app_eval}. \\

Now, clearly both $\lambda^{\rm t}_k$ and $\lambda^{\rm n}_k$ strongly disagree with the corresponding eigenvalues $\lambda^{\SB,{\rm t}}_k$, $\lambda^{\SB,{\rm n}}_k$ of $\mc{L}^\SB_\epsilon$ for large $\abs{k}$, since $\lambda^{\rm t}_k$ and $\lambda^{\rm n}_k$ grow linearly in $\abs{k}$ by \eqref{Tgrowth} and \eqref{Ngrowth}, but $\lambda^{\SB,{\rm t}}_k$ and $\lambda^{\SB,{\rm n}}_k$ blow up and jump to negative values as $\abs{k}\to \frac{2}{\pi\epsilon}e^{-\gamma-1/2}$ and $\abs{k}\to \frac{2}{\pi\epsilon}e^{-\gamma+1/2}$, respectively. However, we can make use of the close agreement between the spectra of the PDE spectrum and the slender body approximation at low wavenumbers. \\

In particular, given Dirichlet data with at least $H^1(\T)$ regularity, we can show the following error estimate for the difference between the slender body PDE operator $\mc{L}_\epsilon^{-1}$ and the truncated slender body approximation $(\mc{L}_\epsilon^\SB)^{-1}_{NM}$.

\begin{theorem}[Truncated Stokes approximation error estimate]\label{thm:errS}
Let $\mc{L}_\epsilon^{-1}$ be as in \eqref{avg_DTN} for the Stokes slender body PDE on $\p\mc{C}_\epsilon$, and let $(\mc{L}_\epsilon^\SB)^{-1}_{NM}$ be the truncated slender body approximation defined in \eqref{LambdaNM} with $N=C_1/\epsilon$, $C_1\le \frac{1}{4\pi}$, and $M=C_2/\epsilon$, $C_2\le \frac{73}{100 \pi}$. Then for any slender body velocity $\bu \in H^1(\T)$, we have
\begin{equation}
\norm{\mc{L}_\epsilon^{-1} [\bu] - (\mc{L}_\epsilon^\SB)^{-1}_{NM}[\bu] }_{L^2(\T)} \le C\epsilon \norm{\bu}_{H^1(\T)}.
\end{equation}
Furthermore, if $\bu\in H^2(\T)$, we have 
\begin{equation}
\norm{\mc{L}_\epsilon^{-1} [\bu] - (\mc{L}_\epsilon^\SB)^{-1}_{NM}[\bu] }_{L^2(\T)} \le C\epsilon^2 \norm{\bu}_{H^2(\T)}.
\end{equation}
\end{theorem}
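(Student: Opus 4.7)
My plan is to exploit the fact that both $\mc{L}_\epsilon^{-1}$ and $(\mc{L}_\epsilon^\SB)^{-1}_{NM}$ are simultaneously diagonalized in the Fourier basis $\{e^{i\pi k z}\be_j\}_{j=x,y,z}$ along $\mc{C}_\epsilon$, so by Parseval's identity the squared $L^2$ error splits into a low-frequency eigenvalue-comparison piece and a high-frequency truncation tail:
\[
\norm{\mc{L}_\epsilon^{-1}[\bu]-(\mc{L}_\epsilon^\SB)^{-1}_{NM}[\bu]}_{L^2(\T)}^2
=\sum_{0<|k|\le N}\abs{\lambda^{\rm t}_k-\lambda^{\SB,{\rm t}}_k}^2\abs{\wh u_{z,k}}^2
+\sum_{|k|>N}\abs{\lambda^{\rm t}_k}^2\abs{\wh u_{z,k}}^2
+(\text{normal analogue}).
\]
The two normal-direction sums are handled identically, replacing $(N,\lambda^{\rm t}_k,\lambda^{\SB,{\rm t}}_k)$ with $(M,\lambda^{\rm n}_k,\lambda^{\SB,{\rm n}}_k)$ and using the bounds in the $\be_x,\be_y$ directions.

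For the truncation tails, I would invoke the linear growth bounds \eqref{Tgrowth}--\eqref{Ngrowth} of Proposition \ref{Stokes_spec} to get $\abs{\lambda^{\rm t}_k},\abs{\lambda^{\rm n}_k}\le C\epsilon\abs{k}$ for all $\abs{k}\ge 1$ (since $N,M\sim 1/\epsilon$). Then $\sum_{|k|>N}\abs{\lambda^{\rm t}_k}^2|\wh u_{z,k}|^2\le C\epsilon^2\sum_{|k|>N}k^2\abs{\wh u_{z,k}}^2\le C\epsilon^2\norm{\bu}_{H^1(\T)}^2$, which yields the $H^1$ tail estimate. For the $H^2$ version I would insert the factor $k^2/N^2\ge 1$ (valid on $|k|>N$) to borrow two extra derivatives, gaining a factor $1/N^2\sim\epsilon^2$ and producing $C\epsilon^4\norm{\bu}_{H^2(\T)}^2$.

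The technical heart of the argument is an eigenvalue comparison $\abs{\lambda^{\rm t}_k-\lambda^{\SB,{\rm t}}_k}\le C\epsilon^2 k^2$ for $|k|\le N$, and likewise for the normal direction on $|k|\le M$. Granted this bound, the low-frequency sum becomes $\sum_{|k|\le N}C\epsilon^4k^4\abs{\wh u_{z,k}}^2$, which is $\le C\epsilon^4 N^2\norm{\bu}_{H^1}^2\le C\epsilon^2\norm{\bu}_{H^1}^2$ using $N\sim 1/\epsilon$, and trivially $\le C\epsilon^4\norm{\bu}_{H^2}^2$ when two more derivatives are available. To prove the comparison, I would expand the numerator and denominator of \eqref{eigsT} using the small-argument series $K_0(x)=-\log(x/2)-\gamma+O(x^2\log x)$ and $K_1(x)=1/x+O(x\log x)$ and match against \eqref{KR_eigs_St}.

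The main obstacle will be this eigenvalue comparison itself: for $|k|\le N\sim 1/\epsilon$ both eigenvalues are individually only of size $1/\abs{\log\epsilon}$, so a naive subtraction loses the crucial quadratic gain. One must instead cancel the leading logarithmic terms inside the Bessel quotient before estimating the remainder, and this is precisely the role of the modified-Bessel-ratio bound advertised in the introduction. The hypotheses $C_1\le 1/(4\pi)$ and $C_2\le 73/(100\pi)$ enter here, confining $x=\pi\epsilon|k|$ to an interval on which such a ratio bound yields a clean $O(x^2)$ remainder; in particular the sharper constant $73/(100\pi)$ in the normal direction reflects the more delicate denominator structure of \eqref{eigsN}. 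I would first rehearse the entire argument in the scalar Laplace setting of Section \ref{sec:SBLap}, where the Bessel expressions involve only $K_0$ and $K_1$, and then port the cancellation pattern to the Stokes case.
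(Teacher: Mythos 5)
Your proposal is correct and follows essentially the same route as the paper's proof of Theorem \ref{thm:errS}: a Parseval splitting into tangential and normal parts, the low modes $\abs{k}\le N,M$ controlled by the eigenvalue-difference bounds $\abs{\lambda_k-\lambda^{\SB}_k}\le C\epsilon^2k^2$ of Lemmas \ref{tan_eig_diff} and \ref{nor_eig_diff}, the tails controlled by the linear growth bounds \eqref{Tgrowth}--\eqref{Ngrowth}, and the choice $N,M\sim 1/\epsilon$, with identical $H^1$ versus $H^2$ bookkeeping. Your parenthetical sketch of how the eigenvalue comparison itself would be proved (small-argument Bessel expansions) differs from the paper's ODE/Gr\"onwall comparison of $B_{\rm t},B_{\rm n}$ with $B^{\SB}_{\rm t},B^{\SB}_{\rm n}$, but that belongs to the separate lemmas, not to the proof of this theorem.
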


For the $\delta$-regularized slender body approximation $(\mc{L}_\epsilon^\delta)^{-1}$, a similar error estimate holds. 
\begin{theorem}[$\delta$-regularized Stokes approximation error estimate]\label{thm:errREGS}
Let $\mc{L}_\epsilon^{-1}$ be as in \eqref{avg_DTN} for the slender body Stokes PDE on $\p\mc{C}_\epsilon$, and let $(\mc{L}_\epsilon^\delta)^{-1}$ be the $\delta$-regularized slender body Stokes approximation \eqref{KR_reg}. Then, given $\delta>\sqrt{e}$, for any fiber velocity $\bu\in H^1(\T)$ we have 
\begin{equation}\label{L2estREG_S}
\norm{\mc{L}_\epsilon^{-1} [\bu] - (\mc{L}_\epsilon^\delta)^{-1} [\bu]}_{L^2(\T)} \le C_\delta \, \epsilon \norm{\bu}_{H^1(\T)}.
\end{equation}
Furthermore, if $\bu\in H^2(\T)$, we obtain
\begin{equation}\label{L2estREG2_S}
\norm{\mc{L}_\epsilon^{-1} [\bu] - (\mc{L}_\epsilon^\delta)^{-1} [\bu]}_{L^2(\T)} \le C_\delta \, \epsilon^2 \norm{\bu}_{H^2(\T)}.
\end{equation}
Here $C_\delta=C_1\delta^2(1+\log\delta) + \frac{C_2}{-1+2\log\delta}$ for $C_1$, $C_2$ independent of $\delta$.
\end{theorem}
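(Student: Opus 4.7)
The plan is to mirror the Fourier--diagonalization framework used for Theorem \ref{thm:errS}, but replace the spectral cutoff with the natural high-wavenumber decay of the regularized kernel. Because $\mc{C}_\epsilon$ is translation-invariant in $z$ and axisymmetric, the operator $\mc{L}_\epsilon^\delta$ decouples into a tangential ($\be_z$) and a normal ($\be_x,\be_y$) component, each of which is a Fourier multiplier. The multiplication operator $\bm{\Lambda}_\delta$ contributes constant eigenvalues $\tfrac{-1+2\log\delta}{4\pi}$ (tangential) and $\tfrac{1+2\log\delta}{8\pi}$ (normal). The convolution part $\bm{K}_\delta$ is diagonalized by $e^{i\pi k z}$ with eigenvalue a constant times $K_0(\pi\delta\epsilon|k|)$, obtained from the classical identity $\int_\R e^{-i\xi z}(z^2+a^2)^{-1/2}\,dz = 2K_0(a|\xi|)$ together with Poisson summation to account for periodization. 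Assembling these gives closed-form expressions of the shape
\begin{equation*}
\frac{1}{\lambda_k^{\delta,\rm t}} = \frac{-1+2\log\delta}{4\pi} + c_1 K_0(\pi\delta\epsilon|k|), \qquad \frac{1}{\lambda_k^{\delta,\rm n}} = \frac{1+2\log\delta}{8\pi} + c_2 K_0(\pi\delta\epsilon|k|),
\end{equation*}
both of which are strictly positive and bounded below by a positive multiple of $-1+2\log\delta$ precisely when $\delta>\sqrt{e}$; this is the source of the $1/(-1+2\log\delta)$ term in $C_\delta$, and gives well-posedness of $(\mc{L}_\epsilon^\delta)^{-1}$.

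By Parseval's theorem the $L^2$ error factors through the eigenvalue differences,
\begin{equation*}
\bigl\lVert \mc{L}_\epsilon^{-1}[\bu] - (\mc{L}_\epsilon^\delta)^{-1}[\bu] \bigr\rVert_{L^2(\T)}^2 = \sum_{q\in\{\rm t,n\}} \sum_{k\neq 0} \bigl\lvert \lambda_k^q - \lambda_k^{\delta,q} \bigr\rvert^2 \bigl\lvert \wh u_{q,k}\bigr\rvert^2,
\end{equation*}
so it suffices to prove the pointwise spectral bounds $|\lambda_k^q - \lambda_k^{\delta,q}| \le C_\delta\, \epsilon(1+|k|)$ for the $H^1$ estimate and $\le C_\delta\, \epsilon^2(1+|k|)^2$ for the $H^2$ estimate. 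I will split into two regimes. In the \emph{high regime} $\pi\epsilon|k|\gtrsim 1$, the Bessel function $K_0(\pi\delta\epsilon|k|)$ decays exponentially, so $\lambda_k^{\delta,q}$ is bounded above by a $\delta$-dependent constant, while Proposition \ref{Stokes_spec} gives $\lambda_k^q\le C(\epsilon|k|+1)$; the difference is automatically bounded by $C\epsilon|k|$, and in fact by $C\epsilon^2|k|^2$ since $\epsilon|k|\gtrsim 1$. In the \emph{low regime} $\pi\epsilon|k|\lesssim 1$ I expand both eigenvalues around zero: the small-argument expansions of $K_0,K_1,K_2$ in \eqref{eigsT}--\eqref{eigsN} (already used in the truncation analysis) and of $K_0(\pi\delta\epsilon|k|)$ share the same leading $-\log(\epsilon|k|)$ behavior, which is exactly what slender body theory was designed to capture, so these logarithmic terms cancel. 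The first nontrivial remainder in $\lambda_k^{\delta,q}$ comes from the quadratic correction $K_0(x) = -\log(x/2)-\gamma + \tfrac{x^2}{4}(1-\gamma-\log(x/2)) + O(x^4)$, evaluated at $x=\pi\delta\epsilon|k|$; this produces a factor of $\delta^2$ times $(\epsilon|k|)^2(1+|\log(\delta\epsilon|k|)|)$, which will yield the $\delta^2(1+\log\delta)$ prefactor in $C_\delta$ after the corresponding quadratic term in $\lambda_k^q$ is subtracted.

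The main technical obstacle is the clean execution of Step 3 in the low regime: matching the $\bm\Lambda_\delta + c K_0(\pi\delta\epsilon|k|)$ asymptotics against the more complicated ratio of Bessel functions in \eqref{eigsT}--\eqref{eigsN}, so that the logarithmic singularities in $\epsilon|k|$ cancel \emph{exactly} and only the $O(\epsilon^2 k^2)$-with-$\delta$-weights remainder survives. For this I plan to reuse the Bessel-ratio bounds developed for the truncated case (needed to control $\lambda_k^q$) and to complement them with a new one-variable bound on $K_0(\pi\delta\epsilon|k|) - (\text{logarithmic part of }\lambda_k^q)$, tracking the $\delta$ dependence explicitly. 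Once the spectral estimate is established, summing against $\sum_k (1+k^2)^s |\wh u_{q,k}|^2$ for $s=1$ or $s=2$ and dividing out by the lower bound on $1/\lambda_k^{\delta,q}$ (which produces the $1/(-1+2\log\delta)$ factor) yields \eqref{L2estREG_S} and \eqref{L2estREG2_S}.
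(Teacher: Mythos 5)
Your setup coincides with the paper's: the eigenvalue formulas \eqref{lamREG_tS}--\eqref{lamREG_nS}, Parseval, and a two-regime spectral comparison, with the high-wavenumber regime handled exactly as in Lemma \ref{diff_REGS} (boundedness of $\lambda_k^{\delta,q}$ for $\delta>\sqrt{e}$ against the linear growth \eqref{Tgrowth}--\eqref{Ngrowth}); that part is fine and is the source of the $1/(-1+2\log\delta)$ contribution to $C_\delta$. The genuine gap is the low-wavenumber estimate $\abs{\lambda_k^q-\lambda_k^{\delta,q}}\le C\delta^2(1+\log\delta)\epsilon^2k^2$ (the paper's \eqref{REG_diff1}--\eqref{REG_diff2}), which carries the whole theorem and which you yourself flag as unresolved. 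Moreover, the mechanism you sketch is off: the quantity of size $\delta^2(\epsilon\abs{k})^2(1+\abs{\log(\delta\epsilon\abs{k})})$ is the difference of the \emph{reciprocals} $1/\lambda_k^{\delta,q}-1/\lambda_k^q$, and the extra $\abs{\log(\epsilon\abs{k})}$ is not cancelled by ``the corresponding quadratic term in $\lambda_k^q$''; if that logarithm survives at the level of the eigenvalues themselves, the $H^2$ rate degrades to $\epsilon^2\abs{\log\epsilon}$ (fix $k=1$ and let $\epsilon\to0$). What actually removes it is that $\abs{\lambda_k^q-\lambda_k^{\delta,q}}=\lambda_k^q\,\lambda_k^{\delta,q}\,\abs{1/\lambda_k^{\delta,q}-1/\lambda_k^q}$ with both eigenvalues of size $O(1/\abs{\log(\epsilon\abs{k})})$ in this regime, so the squared logarithm in the prefactor absorbs the logarithm in the remainder; equivalently, the paper's Gr\"onwall comparison in Section \ref{sec:REGS} multiplies the kernel error $\abs{\delta K_1(\delta z)-1/z}\lesssim\delta^2 z(1+\log\delta+\abs{\log z})$ by $B_{\delta,{\rm t}}^2\lesssim 1/\abs{\log z}^2$ (via Proposition \ref{bessel_smallz}), which is exactly what produces the clean $\delta^2(1+\log\delta)z^2$ bound.

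Two further items would still be needed to close the step. First, an asymptotic expansion as $\epsilon\abs{k}\to0$ is not a uniform bound over the whole low regime $\pi\epsilon\abs{k}\lesssim1$: you need quantitative remainders up to an explicit threshold (the paper takes $z=\pi\epsilon\abs{k}\le 1/4$ tangentially and $\le 2/3$ normally, chosen so that the Gr\"onwall constants $c_{\rm t,2}<1$ and $c_{\rm n,2}<4$), and that threshold then fixes where your ``high regime'' must take over. Second, in the normal direction the PDE eigenvalue \eqref{eigsN} is not controlled by the Bessel-ratio bound of Lemma \ref{lem:bessel} alone: the paper needs the separate estimate $\abs{h(z)}\le\tfrac{9}{8}z$ (Proposition \ref{h_bd}, Appendix \ref{app_hbd}) on the inhomogeneity of the ODE for $B_{\rm n}$, and your plan to ``reuse the bounds from the truncated case'' relies on this kind of control without saying how it enters a direct expansion of \eqref{eigsN}.
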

Note that we obtain the same order of convergence as $\epsilon \to 0$ as in Theorem \ref{thm:errS}, but Theorem \ref{thm:errREGS} holds without restricting the wavenumber $k$. In addition, the estimates \eqref{L2estREG_S} and \eqref{L2estREG2_S} provide some guidance in choosing the regularization parameter $\delta$. In particular, there exists an optimal regularization $\sqrt{e}< \delta <\infty$ which minimizes the constant $C_\delta$, given by $\delta^2(-1+2\log\delta)^2(\frac{3}{2}+\log\delta)=C_2/C_1$. For $0.05 \le C_2/C_1 \le 10$, this yields an optimal $\delta$ in the range $1.72 \le \delta \le 2.5$. Tracking the value of constants in the proof of Theorem \ref{thm:errREGS} gives $C_2/C_1\approx 0.1$ in the $H^1$ estimate and $C_2/C_1\approx 0.085$ in the $H^2$ estimate, but again we make no claims of optimality for these $C_1$ and $C_2$.  \\

Furthermore, as an application of Theorem \ref{thm:errS}, we can show the $\epsilon$-dependence in a well-posedness estimate for the mapping $\mc{L}_\epsilon^{-1}: H^1(\T)\to L^2(\T)$. 
\begin{theorem}[Stokes well-posedness estimate]\label{thm:wellpoS}
Let $\mc{L}_\epsilon^{-1}$ be as in \eqref{avg_DTN} for the Stokes slender body PDE on $\p\mc{C}_\epsilon$. For $\bu\in H^1(\T)$, we have that $\mc{L}_\epsilon^{-1} [\bu]$ satisfies the bound
\begin{equation}\label{wellpoS}
\norm{\mc{L}_\epsilon^{-1} [\bu]}_{L^2(\T)} \le \frac{C}{\abs{\log\epsilon}}\norm{\bu}_{H^1(\T)}. 
\end{equation}
\end{theorem}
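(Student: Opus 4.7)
The strategy is to apply the triangle inequality
\begin{equation*}
\|\mc{L}_\epsilon^{-1}[\bu]\|_{L^2(\T)} \le \|(\mc{L}_\epsilon^\SB)^{-1}_{NM}[\bu]\|_{L^2(\T)} + \|\mc{L}_\epsilon^{-1}[\bu] - (\mc{L}_\epsilon^\SB)^{-1}_{NM}[\bu]\|_{L^2(\T)},
\end{equation*}
bound the error term directly by Theorem \ref{thm:errS}, and reduce the problem to a purely spectral estimate for the truncated slender body approximation $(\mc{L}_\epsilon^\SB)^{-1}_{NM}$, which has an explicit Fourier diagonalization by Proposition \ref{KRop_eigs}.

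For the first (nonerror) term, I would use Parseval's identity to rewrite
\begin{equation*}
\|(\mc{L}_\epsilon^\SB)^{-1}_{NM}[\bu]\|_{L^2(\T)}^2 \le \Big(\sup_{1\le |k|\le N}\frac{|\lambda_k^{\SB,\rm t}|^2}{1+k^2}\Big)\|u_z\|_{H^1(\T)}^2 + \Big(\sup_{1\le |k|\le M}\frac{|\lambda_k^{\SB,\rm n}|^2}{1+k^2}\Big)\|u_x\be_x+u_y\be_y\|_{H^1(\T)}^2.
\end{equation*}
Treating the tangential case (the normal case is identical in structure), write $L_\epsilon(k) := 1 + 2\log(\pi\epsilon|k|/2) + 2\gamma$, so $|\lambda_k^{\SB,\rm t}|=4\pi/|L_\epsilon(k)|$. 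For $1\le|k|\le N=C_1/\epsilon$ with $C_1 \le 1/(4\pi)$, one has $L_\epsilon(k)<0$ and $|L_\epsilon(k)|$ decreases monotonically from $|L_\epsilon(1)|\sim 2|\log\epsilon|$ down to a value bounded below by an absolute positive constant (roughly $2$). Introduce the threshold $K_0:=\tfrac{2}{\pi\sqrt{\epsilon}}e^{-(1+2\gamma)/2}$, chosen so that $|L_\epsilon(K_0)|=|\log\epsilon|$; note $K_0\sim 1/\sqrt{\epsilon}$ lies inside $[1,N]$ for all sufficiently small $\epsilon$.

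Now I would split the supremum at $K_0$. For $|k|\le K_0$, monotonicity of $|L_\epsilon|$ gives $|\lambda_k^{\SB,\rm t}| \le 4\pi/|\log\epsilon|$, hence $|\lambda_k^{\SB,\rm t}|^2/(1+k^2)\le C/|\log\epsilon|^2$. For $K_0<|k|\le N$, $|\lambda_k^{\SB,\rm t}|$ is bounded by an absolute constant, while $1+k^2\ge K_0^2 \sim 1/\epsilon$, yielding $|\lambda_k^{\SB,\rm t}|^2/(1+k^2)\le C\epsilon$. Since $\epsilon|\log\epsilon|^2\to 0$ as $\epsilon\to 0$, the $|\log\epsilon|^{-2}$ bound dominates on the whole range, giving $\|(\mc{L}_\epsilon^\SB)^{-1}_{NM}[\bu]\|_{L^2}\le (C/|\log\epsilon|)\|\bu\|_{H^1}$. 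Combined with the $O(\epsilon)$ error from Theorem \ref{thm:errS}, and again using $\epsilon=o(1/|\log\epsilon|)$, we obtain the claimed bound \eqref{wellpoS}.

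The argument is not technically delicate. The only real checks are that $K_0$ sits inside $[1,N]$ (and its analogue inside $[1,M]$ for the normal direction, where the eigenvalue formula \eqref{KR_eigs_Sn} has a sign change but otherwise behaves identically), and that the admissible truncation parameters $C_1\le 1/(4\pi)$ and $C_2\le 73/(100\pi)$ of Theorem \ref{thm:errS} are indeed enough to keep $|L_\epsilon(k)|$ bounded below by a positive constant on the full range, so that the second piece of the split remains controlled.
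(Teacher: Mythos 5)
Your proof is correct, but it takes a genuinely different route from the paper's. The paper does not apply Theorem \ref{thm:errS} as a black box: instead it re-enters the proof and re-runs estimate \eqref{errScomp0} with a \emph{smaller} truncation level $N=M=\floor{1/\sqrt{\epsilon}}$. With that choice every retained mode satisfies $\abs{k}\le 1/\sqrt{\epsilon}$, so $\lambda_k^{\SB,{\rm t}},\lambda_k^{\SB,{\rm n}}\le C/\abs{\log(\epsilon\abs{k})}\le C/\abs{\log\sqrt{\epsilon}}$, and the main term is bounded by $\frac{C}{\abs{\log\epsilon}}\norm{\bu}_{L^2(\T)}$ (only an $L^2$ norm is needed there), while the error term becomes $O(\sqrt{\epsilon})\norm{\bu}_{H^1(\T)}$, which is still $o(1/\abs{\log\epsilon})$. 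You instead keep the truncation at $N\sim 1/\epsilon$, $M\sim 1/\epsilon$ so that the statement of Theorem \ref{thm:errS} applies verbatim (giving the sharper $O(\epsilon)$ error), and you compensate in the main term by weighting with $1+k^2$ and splitting the spectral supremum at an intermediate wavenumber $K_0\sim 1/\sqrt{\epsilon}$ where $\abs{L_\epsilon(K_0)}=\abs{\log\epsilon}$: below $K_0$ the monotonicity of $\abs{L_\epsilon}$ gives the $1/\abs{\log\epsilon}$ smallness of the eigenvalue itself, above $K_0$ the eigenvalue is only $O(1)$ (your lower bounds $\abs{L_\epsilon}\gtrsim 2$ at $k=N$, and $\gtrsim 1.8$ for the normal denominator at $k=M$, are consistent with the cutoffs $C_1\le\frac{1}{4\pi}$, $C_2\le\frac{73}{100\pi}$) but the weight $1+k^2\ge K_0^2\sim 1/\epsilon$ supplies the decay, and $\epsilon\abs{\log\epsilon}^2\to 0$ closes the argument. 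The trade-off is that the paper's version avoids any spectral splitting and needs only $L^2$ control of $\bu$ in the main term but must cite an intermediate estimate from a proof, whereas yours uses the quoted theorem cleanly at the cost of using the full $H^1$ weight in the main term and the extra monotonicity/threshold bookkeeping; both are sound, and your attention to the checks that $K_0\in[1,N]$ for $\epsilon$ small and that $\abs{L_\epsilon}$ stays bounded below on the whole truncated range is exactly what the argument requires.
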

This $\abs{\log\epsilon}^{-1}$ dependence is consistent with the $\abs{\log\epsilon}$ dependence obtained in the well-posedness estimate \eqref{stokes_est} for the forward map $\mc{L}_\epsilon$. \\

In order to prove Theorem \ref{thm:errS}, we will rely on the following two lemmas bounding the difference between eigenvalues of the slender body PDE operator $\mc{L}_\epsilon^{-1}$ with the eigenvalues of the slender body approximation $(\mc{L}_\epsilon^\SB)^{-1}$ at low wavenumber.

%
\begin{lemma}[Difference in tangential Stokes eigenvalues]\label{tan_eig_diff}
Let $\lambda^{\rm t}_k$ denote the tangential eigenvalues of the slender body PDE operator $\mc{L}_\epsilon^{-1}$ \eqref{avg_DTN}, given by \eqref{eigsT}, and let $\lambda^{\SB,{\rm t}}_k$ denote the tangential eigenvalues of the slender body approximation $(\mc{L}_\epsilon^\SB)^{-1}$ \eqref{gotz2}, given by \eqref{KR_eigs_St}.   
For $\abs{k}\le \frac{1}{4\pi\epsilon}$, the difference $\lambda^{\rm t}_k - \lambda^{\SB,{\rm t}}_k$ satisfies
\begin{equation}\label{tan_diff}
\abs{\lambda^{\rm t}_k - \lambda^{\SB,{\rm t}}_k} \le C \epsilon^2 k^2.
\end{equation}
\end{lemma}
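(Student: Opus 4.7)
The plan is to work directly with the closed-form spectral expressions. First I would set $\xi := \pi\epsilon|k|$, so that the hypothesis $|k|\le \frac{1}{4\pi\epsilon}$ reads $\xi \le 1/4$, and abbreviate $L := -\log(\xi/2)-\gamma$. Then \eqref{eigsT} and \eqref{KR_eigs_St} become
\[
\lambda^{\rm t}_k = \frac{4\pi\xi K_1^2}{D}, \qquad D := 2K_0 K_1 + \xi(K_0^2 - K_1^2), \qquad \lambda^{\SB,{\rm t}}_k = \frac{4\pi}{2L-1},
\]
with $K_j = K_j(\xi)$. Putting the difference over a common denominator and rearranging the cross-terms so that the leading singular parts cancel yields the key identity
\[
\lambda^{\rm t}_k - \lambda^{\SB,{\rm t}}_k = \frac{4\pi\bigl[\,2K_1(L\xi K_1 - K_0) - \xi K_0^2\,\bigr]}{D\,(2L-1)}.
\]

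Next I would insert the standard small-argument expansions $K_0(\xi) = L + \rho_0(\xi)$ and $K_1(\xi) = 1/\xi + \rho_1(\xi)$, where the explicit series representations of $K_\nu$ give uniform bounds $|\rho_0(\xi)| \le C_0\,\xi^2|\log\xi|$ and $|\rho_1(\xi)| \le C_1\,\xi|\log\xi|$ on $(0,1/4]$. Then $L\xi K_1 - K_0 = L\xi\rho_1 - \rho_0$, and each of the three pieces of the bracketed numerator is $O(\xi L^2)$: namely $|2K_1 L\xi\rho_1| \le C\xi L|\log\xi|$, $|2K_1\rho_0|\le C\xi|\log\xi|$, and $|\xi K_0^2|\le C\xi L^2$. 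Since $\xi \le 1/4$ forces $L \ge \log 8-\gamma > 1$, so that $L$ is comparable to $|\log\xi|$ on this range, the bracket is bounded by $C\xi|\log\xi|^2$.

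For the denominator, the same expansion produces $D = (2L-1)/\xi + O(|\log\xi|)$. Using $D>0$ (which follows from the positivity of $\mc{L}_\epsilon^{-1}$) together with $2L-1$ bounded below by a positive constant on $(0,1/4]$, one obtains a uniform lower bound $D(2L-1) \ge c(2L-1)^2/\xi$. Combining this with the numerator bound and using that $|\log\xi|/(2L-1)$ is bounded on $(0,1/4]$ yields
\[
|\lambda^{\rm t}_k - \lambda^{\SB,{\rm t}}_k| \le \frac{C\,\xi|\log\xi|^2 \cdot \xi}{(2L-1)^2} \le C'\xi^2 = C'\pi^2\epsilon^2 k^2,
\]
which is \eqref{tan_diff}.

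The main technical nuisance will be producing explicit, uniform constants in the Bessel remainders $\rho_0,\rho_1$ and quantitatively verifying the lower bound on $D$ over the entire range $\xi\in(0,1/4]$; the particular cutoff $|k|\le \frac{1}{4\pi\epsilon}$ is presumably tailored so that $\xi$ stays in a range where the two-term Bessel expansion is quantitatively accurate and $D$ remains safely positive. The algebraic rearrangement producing $2K_1(L\xi K_1-K_0)-\xi K_0^2$ in the numerator is the crucial conceptual step, since it is exactly this grouping that encodes the cancellation between the $1/\xi$ singularity of $K_1$ and the logarithmic singularity of $K_0$ which makes the PDE and slender body eigenvalues agree to leading order.
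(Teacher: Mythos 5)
Your proposal is correct in substance but follows a genuinely different route from the paper. You put the two closed-form expressions \eqref{eigsT} and \eqref{KR_eigs_St} over a common denominator, isolate the cancellation in the bracket $2K_1(L\xi K_1-K_0)-\xi K_0^2$, and then invoke two-term small-argument expansions of $K_0$ and $K_1$ with quantitative remainders, together with a lower bound on $D=2K_0K_1+\xi(K_0^2-K_1^2)$ and on $2L-1$, to conclude an $O(\xi^2)$ bound; your algebraic identity is correct, and indeed the bracket is genuinely of size $\xi L^2$ with no further cancellation, so the estimate closes. The paper instead works with the functions $B_{\rm t}(z)$ and $B_{\rm t}^{\SB}(z)$ (with $\lambda^{\rm t}_k=4\pi B_{\rm t}(\pi\epsilon\abs{k})$, $\lambda^{\SB,{\rm t}}_k=4\pi B_{\rm t}^{\SB}(\pi\epsilon\abs{k})$), observes that they satisfy the ODEs \eqref{BtODE} and \eqref{BSBt_ODE} with the same initial value, bounds the forcing term $2(K_0/K_1)B_{\rm t}\le 2z$ via the Bessel-ratio inequality of Lemma \ref{lem:bessel}, and then runs a Gr\"onwall argument; the cutoff $\abs{k}\le\frac{1}{4\pi\epsilon}$ is chosen precisely so that $c_{\rm t}=B_{\rm t}^{\SB}(1/4)+B_{\rm t}(1/4)<1$, which makes the Gr\"onwall integral converge and yields the explicit constant $4\pi^3/(1-c_{\rm t})$. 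The paper's route buys clean, fully explicit constants without ever needing uniform remainder bounds for the Bessel expansions or a quantitative lower bound on $D$ (only the monotone ratio bound), and the same ODE-comparison template is reused verbatim for the Laplace, normal-direction, and $\delta$-regularized cases; your route buys a more elementary and transparent picture of exactly which terms cancel, at the cost of the bookkeeping you yourself flag --- explicit constants for $\rho_0,\rho_1$ (for which bounds in the spirit of Proposition \ref{bessel_smallz} suffice) and a quantitative lower bound $D\ge c(2L-1)/\xi$ on $(0,1/4]$ --- and for you the cutoff is only needed to keep $2L-1$ bounded away from zero and the remainders small, rather than being forced by a Gr\"onwall exponent.
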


\begin{lemma}[Difference in normal Stokes eigenvalues]\label{nor_eig_diff}
Let $\lambda^{\rm n}_k$ denote the normal direction eigenvalues of the slender body PDE operator $\mc{L}_\epsilon^{-1}$ \eqref{avg_DTN}, given by \eqref{eigsN}, and let $\lambda^{\SB,{\rm n}}_k$ denote the normal direction eigenvalues of the slender body approximation $(\mc{L}_\epsilon^\SB)^{-1}$ \eqref{gotz2}, given by \eqref{KR_eigs_Sn}. 
For $\abs{k}\le \frac{73}{100\pi\epsilon}$, the difference $\lambda^{\rm n}_k - \lambda^{\SB,{\rm n}}_k$ satisfies
\begin{equation}\label{nor_diff}
\abs{\lambda^{\rm n}_k - \lambda^{\SB,{\rm n}}_k} \le C \epsilon^2 k^2.
\end{equation}
\end{lemma}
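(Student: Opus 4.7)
Setting $x := \pi\epsilon|k|$, the restriction $|k| \le 73/(100\pi\epsilon)$ reads $x \le 73/100$, putting us safely in the small-argument regime for the modified Bessel functions, and the goal becomes $|\lambda^{\rm n}_k - \lambda^{\SB,{\rm n}}_k| \le C x^2$. The strategy is to reduce $\lambda^{\rm n}_k$ to a rational expression in $x$ and a single Bessel ratio $r(x) := K_0(x)/K_1(x)$, then expand in $x$ and match the leading terms against $\lambda^{\SB,{\rm n}}_k$. Using the recurrence $K_2 = K_0 + (2/x) K_1$ in \eqref{eigsN}, a short calculation gives
\[
\lambda^{\rm n}_k = \pi x \cdot \frac{8 + 2xr + x^2(1 - r^2)}{x + 2r + x^2 r - xr^2 - x^2 r^3},
\]
while from \eqref{KR_eigs_Sn}, $\lambda^{\SB,{\rm n}}_k = 8\pi/(1 + 2L)$ with $L(x) := -\log(x/2) - \gamma$.

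Next I would expand using the classical series for $K_0$ and $K_1$, which give $r(x) = xL + O(x^3)$ uniformly on $(0, 73/100]$ (the logarithmic factors remain bounded there). Plugging into the formula above, the numerator becomes $8 + x^2(1 + 2L) + O(x^4)$ and the denominator becomes $x(1 + 2L) + O(x^3)$. The prefactor $\pi x$ cancels the leading power of $x$ in the denominator, and a short manipulation yields
\[
\lambda^{\rm n}_k = \frac{8\pi}{1 + 2L} + O(x^2) = \lambda^{\SB,{\rm n}}_k + O(x^2),
\]
from which $|\lambda^{\rm n}_k - \lambda^{\SB,{\rm n}}_k| \le C x^2 = C\pi^2 \epsilon^2 k^2$ follows, as required. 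This mirrors the strategy for the tangential case in Lemma \ref{tan_eig_diff}, except that three Bessel functions must be tracked instead of two, which is what makes the range of admissible $|k|$ here slightly more restrictive.

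The main obstacle is keeping the expansions quantitative. Because $L(x)$ grows logarithmically as $x \to 0$, the remainder constants must be tracked carefully so that no extraneous $|\log x|$ factors contaminate the final bound; this will require using explicit bounds on $r(x) - xL$ and on the Bessel ratios $K_0/K_1$ and $K_2/K_1$, for which I would rely on the ratio estimates for modified Bessel functions developed earlier in the paper as the main technical tool. A secondary point is verifying that the denominator $x + 2r + x^2 r - xr^2 - x^2 r^3$ stays uniformly bounded away from zero on $(0, 73/100]$: its leading form $x(1 + 2L)$ is strictly positive for $x < 2e^{1/2-\gamma} \approx 1.85$, so the cutoff $73/100$ is chosen precisely to leave a comfortable margin that absorbs the $O(x^3)$ correction. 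Combining the leading-order cancellation with the quantitative remainder bounds and the denominator lower bound yields the stated estimate.
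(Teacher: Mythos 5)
Your route is genuinely different from the paper's. The paper never expands the Bessel functions in series: it sets $B_{\rm n}(z)$ as in \eqref{Bn} (so $\lambda^{\rm n}_k=2\pi B_{\rm n}(\pi\epsilon\abs{k})$), shows that $B_{\rm n}$ and $B_{\rm n}^\SB$ satisfy the Riccati-type ODEs \eqref{BnODE} and \eqref{BSBn_ODE} with the same quadratic term $\frac{1}{2z}B^2$, bounds the extra forcing by $\abs{h(z)}<\tfrac98 z$ (Proposition \ref{h_bd}, proved in Appendix C from Lemma \ref{lem:bessel}), and then runs a Gr\"onwall argument on the difference. In that argument the cutoff $73/100$ is not about positivity of any denominator: it is chosen so that $c_{\rm n}=\max_{0\le z\le 73/100}(B_{\rm n}^\SB+B_{\rm n})\approx 3.916<4$ in \eqref{cBn}, which is exactly what makes the Gr\"onwall exponent integrable and produces a clean $Cz^2$ bound with no logarithms. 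Your proposal instead reduces \eqref{eigsN} to a rational function of $x$ and $r=K_0/K_1$ (your algebraic reduction is correct) and matches it against \eqref{KR_eigs_Sn} by small-argument expansion. That is a viable alternative, and within it any cutoff strictly below $2e^{1/2-\gamma}$ would do, with a constant degrading as one approaches the blow-up of $\lambda^{\SB,{\rm n}}_k$; so your account of where $73/100$ comes from describes your method, not the paper's.

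The gap is in the quantitative step you yourself flag as the main obstacle, and as written the intermediate claims there are not correct. The remainder in $r(x)=xL+\dots$ is $O(x^3\log^2 x)$, not $O(x^3)$, and the parenthetical ``the logarithmic factors remain bounded there'' is false: $L(x)\to\infty$ as $x\to0$ on $(0,73/100]$. Likewise the numerator and denominator remainders are $O(x^4L^2)$ and $O(x^3L^2)$. The final estimate still comes out to $O(x^2)$, but only because after dividing, the offending $L^2$ factors appear as $L^2/(1+2L)^2$, which is bounded; this cancellation is the whole content of the ``no log contamination'' issue and must be exhibited, not asserted. Moreover, the tools you propose to rely on cannot supply it: Lemma \ref{lem:bessel} gives purely algebraic two-sided bounds on $K_1/K_0$ with no logarithmic information, and Proposition \ref{bessel_smallz} gives only one-sided lower bounds, so neither yields a two-sided, log-precise estimate of $r(x)-xL$ with explicit constants. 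To complete your argument you would need explicit two-sided remainder bounds for the small-argument series of $K_0(x)$ and $xK_1(x)$ (standard, but not in the paper), plus a uniform positive lower bound on the exact denominator $x+2r+x^2r-xr^2-x^2r^3$ on $(0,73/100]$ (essentially the positivity of $D_1$ established in Appendix B.2), before the division and the $L^2/(1+2L)^2$ cancellation can be carried out rigorously.
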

Note that the upper bound for $\abs{k}$ is much larger in the normal direction, which is the reason for the different cutoff values $N$ and $M$ in Theorem \ref{thm:errS}. The proofs of Lemmas \ref{tan_eig_diff} and \ref{nor_eig_diff} appear in Sections \ref{sec:tan_diff} and \ref{sec:nor_diff}, respectively. The proofs of Theorems \ref{thm:errS} and \ref{thm:wellpoS} follow in Section \ref{Stokes_err}.\\

Similarly, to show Theorem \ref{thm:errREGS} for the $\delta$-regularized Stokes approximation \eqref{KR_reg}, we will need the following lemma.
\begin{lemma}[Difference in $\delta$-regularized Stokes eigenvalues]\label{diff_REGS}
Let $\delta>\sqrt{e}$. The eigenvalues $\lambda^{\delta,{\rm t}}_k$, $\lambda^{\delta,{\rm n}}_k$ of the $\delta$-regularized Stokes slender body approximation $(\mc{L}_\epsilon^\delta)^{-1}$ in the tangential and normal directions, respectively, are given by
\begin{align}
\label{lamREG_tS}
\lambda^{\delta,{\rm t}}_k &= \frac{4\pi}{ -1+ 2\log \delta + 2 K_0(\delta \pi \epsilon \abs{k})} , \\ 
\label{lamREG_nS}
\lambda^{\delta,{\rm n}}_k &= \frac{8\pi}{1+ 2\log\delta + 2K_0(\delta \pi\epsilon \abs{k})}, \qquad \abs{k}=1,2,3,\dots.
 \end{align}
For each $k$, $\lambda^{\delta,{\rm t}}_k$ and $\lambda^{\delta,{\rm n}}_k$ satisfy
\begin{align}
\label{REG_diff0}
\abs{\lambda_k^{\rm t} - \lambda^{\delta,{\rm t}}_k} \le 4\pi\bigg(\frac{1}{2}+ \frac{1}{-1+2\log\delta}+ \pi\epsilon \abs{k}\bigg), \\
\label{REG_diff00}
\abs{\lambda_k^{\rm n} - \lambda^{\delta,{\rm n}}_k} \le 3\pi\bigg(1+ \frac{8}{3(1+2\log\delta)}+ \pi\epsilon \abs{k}\bigg),
\end{align}
where $\lambda_k^{\rm t}$, $\lambda_k^{\rm n}$ are the tangential and normal eigenvalues \eqref{eigsT} and \eqref{eigsN}, respectively, of the Stokes slender body PDE. Furthermore, for $\abs{k}\le \frac{1}{4\pi\epsilon}$, $\lambda^{\delta,{\rm t}}_k$ satisfies the refined bound 
\begin{equation}\label{REG_diff1}
\abs{\lambda_k^{\rm t} - \lambda^{\delta,{\rm t}}_k} \le C\delta^2(1+\log\delta)\epsilon^2k^2,
\end{equation}
while for $k\le \frac{2}{3\pi\epsilon}$, $\lambda^{\delta,{\rm n}}_k$ satisfies 
\begin{equation}\label{REG_diff2}
\abs{\lambda_k^{\rm n} - \lambda^{\delta,{\rm n}}_k} \le C\delta^2(1+\log\delta)\epsilon^2k^2.
\end{equation}
\end{lemma}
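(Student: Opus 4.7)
The plan is to diagonalize $\mc{L}_\epsilon^\delta$ in the Fourier basis $\{e^{i\pi kz}\be_j\}$ to extract the eigenvalues \eqref{lamREG_tS}--\eqref{lamREG_nS}, and then compare with the Stokes PDE eigenvalues from Proposition \ref{Stokes_spec} via two different routes: a direct triangle inequality combined with the growth bounds \eqref{Tgrowth}--\eqref{Ngrowth} for the coarse estimates \eqref{REG_diff0}--\eqref{REG_diff00}, and a triangle inequality interposing the Keller--Rubinow--Johnson eigenvalues \eqref{KR_eigs_St}--\eqref{KR_eigs_Sn} for the refined low-wavenumber estimates \eqref{REG_diff1}--\eqref{REG_diff2}.

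\emph{Eigenvalues.} Since ${\bf I}+\be_z\be_z^{\rm T}$ and ${\bf I}-3\be_z\be_z^{\rm T}$ are simultaneously diagonalized in $\{\be_x,\be_y,\be_z\}$, the operator $\mc{L}_\epsilon^\delta$ decouples in the tangential and normal directions. Applied to $e^{i\pi kz}\be_j$, the local term $\bm{\Lambda}_\delta$ becomes a scalar multiple of the input, and the periodized convolution integral reduces to
\[
\int_{\R}\frac{e^{-i\pi ky}}{\sqrt{y^2+\delta^2\epsilon^2}}\,dy = 2K_0(\delta\pi\epsilon\abs{k})
\]
via the classical Fourier--Bessel identity. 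Assembling the local and nonlocal contributions produces \eqref{lamREG_tS}--\eqref{lamREG_nS}.

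\emph{Global bounds.} Since $\delta>\sqrt{e}$ gives $-1+2\log\delta>0$, and $K_0>0$ on $(0,\infty)$, the denominators in \eqref{lamREG_tS}--\eqref{lamREG_nS} are bounded below by $-1+2\log\delta$ and $1+2\log\delta$ respectively, yielding the uniform upper bounds $\lambda_k^{\delta,\rm t}\leq 4\pi/(-1+2\log\delta)$ and $\lambda_k^{\delta,\rm n}\leq 8\pi/(1+2\log\delta)$. Adding these to the growth estimates \eqref{Tgrowth}--\eqref{Ngrowth} via triangle inequality yields \eqref{REG_diff0}--\eqref{REG_diff00} after factoring out $4\pi$ and $3\pi$.

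\emph{Refined bounds.} I would triangle through the KRJ eigenvalue:
\[
\abs{\lambda_k^{\rm t} - \lambda_k^{\delta,\rm t}} \leq \abs{\lambda_k^{\rm t} - \lambda_k^{\SB,\rm t}} + \abs{\lambda_k^{\SB,\rm t} - \lambda_k^{\delta,\rm t}},
\]
so that Lemma \ref{tan_eig_diff} controls the first term by $C\epsilon^2 k^2$ on the stated range. For the second term, subtracting the reciprocals in \eqref{KR_eigs_St} and \eqref{lamREG_tS} gives
\[
\frac{1}{\lambda_k^{\SB,\rm t}} - \frac{1}{\lambda_k^{\delta,\rm t}} = -\frac{R(x)}{2\pi}, \qquad R(x):= K_0(x) + \log(x/2) + \gamma, \quad x := \delta\pi\epsilon\abs{k},
\]
where the $\log\delta$ and $\log(\pi\epsilon\abs{k}/2)$ terms combine into $\log(x/2)$. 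Converting back, $\abs{\lambda_k^{\SB,\rm t} - \lambda_k^{\delta,\rm t}} = \lambda_k^{\SB,\rm t}\lambda_k^{\delta,\rm t}\abs{R(x)}/(2\pi)$. The standard series $K_0(x) = -(\log(x/2)+\gamma)I_0(x) + \sum_{m\geq 1}(x/2)^{2m}H_m/(m!)^2$, with $H_m = \sum_{j=1}^m 1/j$, exhibits the cancellation of the logarithmic singularity and yields $\abs{R(x)}\leq Cx^2(1 + \log(2/x))$ uniformly on any bounded interval (together with the alternative crude bound $\abs{R(x)}\lesssim 1+\log\delta$ near $x=\delta/4$). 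For $\abs{k}\leq 1/(4\pi\epsilon)$ we have $x\leq \delta/4$, and I would split into two regimes: for small $x$ the logarithmic decay of $\lambda_k^{\SB,\rm t}\lambda_k^{\delta,\rm t}$ (both denominators scale like $\log(1/x)$) absorbs the $\log(2/x)$ factor in $R$; for $x$ near $\delta/4$ the crude bound on $R$ combined with the $\delta^2\epsilon^2 k^2$ slack in the target inequality suffices. This produces \eqref{REG_diff1}. The estimate \eqref{REG_diff2} is proved identically, invoking Lemma \ref{nor_eig_diff} and using the analogous identity $1/\lambda_k^{\SB,\rm n} - 1/\lambda_k^{\delta,\rm n} = -R(x)/(4\pi)$, now on the range $x\leq 2\delta/3$.

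The main technical obstacle is the refined bound: specifically, achieving the stated $\delta^2(1+\log\delta)$ factor uniformly across $x\in(0,\delta/4]$ requires balancing the two competing regimes above, since neither the small-$x$ nor the large-$x$ estimate is sharp throughout the range. The eigenvalue computation and the global bounds are essentially mechanical once the Fourier--Bessel identity is invoked.
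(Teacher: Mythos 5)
Your eigenvalue computation and coarse bounds \eqref{REG_diff0}--\eqref{REG_diff00} follow the paper's own route: the periodized kernel is recognized as the Laplace-setting convolution kernel \eqref{kernel}, the Fourier--Bessel identity gives the $2K_0(\delta\pi\epsilon\abs{k})$ term, and the triangle inequality combining positivity/boundedness of $\lambda_k^{\delta,q}$ with the growth bounds \eqref{Tgrowth}--\eqref{Ngrowth} is exactly what the paper does. Where you genuinely diverge is in the refined estimates \eqref{REG_diff1}--\eqref{REG_diff2}. The paper does not triangulate through the Keller--Rubinow eigenvalues at all: it compares $B_{\delta,{\rm t}}(z)=1/(-1+2\log\delta+2K_0(\delta z))$ directly with $B_{\rm t}(z)$ (and likewise in the normal direction) by writing the ODE satisfied by their difference, bounding the source terms with the small-argument Bessel estimates of Proposition \ref{bessel_smallz} (in particular $\abs{\delta K_1(\delta z)-1/z}\le \delta^2 z(1+\log\delta+\abs{\log z})$ and $K_0(\delta z)\ge -\log(\delta z)$), and closing with a Gr\"onwall argument; the cutoffs $z\le 1/4$ and $z\le 2/3$ are chosen precisely so that the Gr\"onwall exponents $c_{\rm t,2}<1$ and $c_{\rm n,2}<4$ make the integral converge, and the method yields explicit constants. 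Your route instead reuses Lemmas \ref{tan_eig_diff} and \ref{nor_eig_diff} and reduces the remaining piece to the scalar quantity $R(x)=K_0(x)+\log(x/2)+\gamma$, $x=\delta\pi\epsilon\abs{k}$, estimated from the classical $K_0$ series; this is arguably more elementary, and your reciprocal identities $1/\lambda_k^{\SB,\rm t}-1/\lambda_k^{\delta,\rm t}=-R(x)/(2\pi)$ and $1/\lambda_k^{\SB,\rm n}-1/\lambda_k^{\delta,\rm n}=-R(x)/(4\pi)$ are correct, as is the containment of the range $\abs{k}\le 2/(3\pi\epsilon)$ inside the validity range $\abs{k}\le 73/(100\pi\epsilon)$ of Lemma \ref{nor_eig_diff}. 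The price is the uniform-in-$\delta$ bookkeeping you flag: since $x$ ranges up to $\delta/4$ (resp.\ $2\delta/3$), which is unbounded in $\delta$, you must check (i) that $\lambda_k^{\delta,q}$ stays bounded by an absolute constant on this range (which follows since either $x\le 1$, whence $K_0(x)\ge K_0(1)$, or $x>1$ forces $\delta\ge 4$ resp.\ $\delta\ge 3/2$, whence $\log\delta$ dominates), (ii) that $\lambda_k^{\SB,q}$ is bounded on $\pi\epsilon\abs{k}\le 1/4$ resp.\ $2/3$ (true, the denominators stay away from zero), and (iii) that in the regime $x\gtrsim 1$ the target right-hand side is bounded below by $c(1+\log\delta)$ because $\delta^2\epsilon^2k^2=x^2/\pi^2\gtrsim 1$, which absorbs the crude bound $\abs{R(x)}\lesssim 1+\log\delta$. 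With those points written out, your argument closes and gives the same $\delta^2(1+\log\delta)$ dependence; the paper's Gr\"onwall route avoids the regime split entirely and keeps the constants explicit, which is what feeds into the optimization of $\delta$ discussed after Theorem \ref{thm:errREGS}.
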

Note that $\lambda^{\rm t}_k$ is guaranteed to be positive and bounded as $\abs{k}\to\infty$ as long as $\delta> \sqrt{e}$, which verifies the value reported by Tornberg and Shelley \cite{tornberg2004simulating} as being sufficient for positivity. Furthermore, the low wavenumber behavior of $\lambda^{\rm t}_k$ and $\lambda^{\rm n}_k$ determines the rate of convergence in Theorem \ref{thm:errREGS}, but the high wavenumber bound \eqref{REG_diff1} affects the $\delta$-dependence of $C_\delta$. The optimal choice of regularization parameter $\delta$ has to balance the differing high and low wavenumber behavior. 
The proofs of Lemma \ref{diff_REGS} and Theorem \ref{thm:errREGS} appear in Sections \ref{sec:REGS} and \ref{sec:REGSerr}, respectively. \\


Finally, we point out some implications of the above spectral results for a dynamic filament problem. 
Consider a dynamic thin closed filament that moves through a Stokes fluid and assume that the filament resists bending and is inextensible.
This problem has been considered in \cite{tornberg2004simulating} for open filaments using the slender body approximation discussed above.
The formulation of this problem using the slender body PDE is as follows. 
Let $\bm{X}(s,t)$ be the center line position of this filament. The equations that govern the movement of the filament are given by
\begin{equation}\label{SB_dynamic_PDE}
\begin{aligned}
-\Delta \bu +\nabla p &= 0, \quad \dive \, \bu = 0 \hspace{1.8cm} \text{ in } \Omega_{\epsilon,t} = \R^3\backslash \overline{\Sigma_{\epsilon,t}} \\
\int_0^{2\pi} (\bm{\sigma}\bm{n}) \mc{J}_\epsilon(s,\theta) \, d\theta &= -\frac{\p^4\bm{X}}{\p s^4}+\PD{}{s}\paren{\tau\PD{\bm{X}}{s}} \quad \text{ on } \p\Sigma_{\epsilon,t} \\
\bu\big|_{\p\Sigma_\epsilon} &= \bu(s), \hspace{3.2cm} \text{ unknown but independent of }\theta \\
\abs{\bu} \to 0 & \text{ as } \abs{\bx} \to \infty,\\
\PD{\bm{X}}{t}&=\bm{u}(s), \; \abs{\PD{\bm{X}}{s}}=1.
\end{aligned}
\end{equation}
In the above, $\tau$ is the tension that enforces the inextensibility condition $\abs{\p \bm{X}/\p s}=1$, which must be solved as part of the problem.
We have added a subscript $t$ to $\Sigma_{\epsilon}$ and $\Omega_\epsilon$ to emphasize that these domains are now time dependent.
We note that this problem is physically natural in the sense that it satisfies the following energy dissipation identity:
\begin{equation}
\frac{d}{dt}\int_{\T} \frac{1}{2}\abs{\PDD{2}{\bm{X}}{s}}^2ds=-\int_{\Omega_{\epsilon,t}} 2\, \mc{E}(\bm{u}):\mc{E}(\bm{u}) \,d\bm{x}.
\end{equation}
Let us consider the linearization around the steady state $\bm{X}=\bm{X}_\star(s)=(\cos(s),\sin(s),0)^{\rm T}$. At this steady state, $\bm{u}=0$
and the tension is given by $\tau=1$. Substituting $\bm{X}=\bm{X}_\star+\bm{Y}, \tau=1+\wt{\tau}$ into \eqref{SB_dynamic_PDE} and retaining 
terms that are linear in $\bm{Y}$ and $\wt{\tau}$, we obtain
\begin{equation}\label{SB_dynamic_PDE_linear}
\begin{aligned}
-\Delta \bu +\nabla p &= 0, \quad \dive \, \bu = 0 \hspace{2.3cm} \text{ in } \Omega_{\epsilon,\star} = \R^3\backslash \overline{\Sigma_{\epsilon,\star}} \\
\int_0^{2\pi} (\bm{\sigma}\bm{n}) \mc{J}_\epsilon(s,\theta) \, d\theta &= -\frac{\p^4\bm{Y}}{\p s^4}+\PDD{2}{\bm{Y}}{s}+\PD{\wt{\tau}}{s}\PD{\bm{X}_\star}{s} \quad \text{ on } \p\Sigma_{\epsilon,\star} \\
\bu\big|_{\p\Sigma_{\epsilon,\star}} &= \bu(s), \hspace{3.85cm} \text{unknown but independent of }\theta \\
\abs{\bu} \to 0 & \text{ as } \abs{\bx} \to \infty,\\
\PD{\bm{Y}}{t}&=\bm{u}(s), \; \PD{\bm{X_\star}}{s}\cdot \PD{\bm{Y}}{s}=0.
\end{aligned}
\end{equation}
In the above, $\Sigma_{\epsilon, \star}$ denotes the region $\Sigma_{\epsilon}$ with $\bm{X}=\bm{X}_\star$, and likewise for $\Omega_{\epsilon,\star}$.
Let $\mc{L}_{\epsilon,\star}$ be the operator $\mc{L}_\epsilon$ for the geometry $\bm{X}=\bm{X}_\star$. The above equation can be written as
\begin{equation}\label{Ycirceqn}
\PD{\bm{Y}}{t}=\mc{L}_{\epsilon,\star} \paren{-\frac{\p^4\bm{Y}}{\p s^4}+\PDD{2}{\bm{Y}}{s}+\PD{\wt{\tau}}{s}\PD{\bm{X}_\star}{s}}, \; \PD{\bm{X_\star}}{s}\cdot \PD{\bm{Y}}{s}=0.
\end{equation}
The analysis of this linearized equation is beyond the scope of this manuscript. However, our spectral results allow for a complete analysis of the following analogue of this problem, in which we replace $\bm{X}_\star$ with the periodic straight line filament considered in this manuscript. In this case, the above linear equation reduces to
\begin{equation}\label{Ystreqn}
\PD{\bm{Y}}{t}=\mc{L}_{\epsilon} \paren{-\frac{\p^4\bm{Y}}{\p s^4}+\PDD{2}{\bm{Y}}{s}+\PD{\wt{\tau}}{s}\bm{e}_z}, \quad \bm{e}_z\cdot \PD{\bm{Y}}{s}=0, 
\; \int_\T \bm{Y}(s)ds=0, \; \int_\T \wt{\tau} ds=0.
\end{equation}
The last two conditions have been added to remove translation modes and to allow for unique determination of $\wt{\tau}$.
It is immediate from Proposition \ref{Stokes_spec} that the eigenvectors and eigenvalues of the dynamics are given by
\begin{equation}
\text{eigenvalues: } \nu_k=\frac{-k^4+k^2}{\lambda_k^{\rm n}}, \; \text{eigenvectors: } e^{i\pi k z} \bm{e}_{x,y}.
\end{equation} 
Given Proposition \ref{Stokes_spec} we see that $\nu_k$ behaves like $-k^4\abs{\log(\epsilon \abs{k})}$ when $1\ll \abs{k}\ll 1/\epsilon$ and $-\abs{k}^3/\epsilon$ when $\abs{k}\ll 1/\epsilon$.
It is expected that the spectrum of \eqref{Ycirceqn} and \eqref{Ystreqn} are similar when $\abs{k}\gg 1$.
More generally, suppose $\bm{X}$ is a sufficiently smooth curve whose curvature radius is order $1$ with respect to $\epsilon$. 
The linearization of \eqref{SB_dynamic_PDE} around such a configuration $\bm{X}$ is expected to have similar spectral properties.\\

The {\em small scale decomposition}, first introduced in \cite{beale1993growth,hou1994removing} for the Hele-Shaw and water wave problems, has been crucial for the analysis and scientific computing of various interfacial fluid and fluid structure interaction problems. The above considerations suggest that a suitable small scale decomposition for problem \eqref{SB_dynamic_PDE} will result in a principal evolution operator which behaves like a fourth order diffusion for wave numbers less than $1/\epsilon$, and a third order diffusion with diffusion coefficient $1/\epsilon$ for wave numbers greater than $1/\epsilon$. The parabolic nature of the principal operator implies that $\bm{X}$ will be smooth for $t>0$. We also see that an explicit time-stepping scheme will be prohibitively expensive, which led the authors of \cite{tornberg2004simulating} to develop an implicit scheme. Indeed, if $\Delta s$ is the spatial discretization of $s$ and $\Delta t$ is the time discretization, a stable explicit time marching scheme will require $\Delta t \lesssim (\Delta s)^4$ if $\Delta s \gtrsim \epsilon$ and $\Delta t \lesssim \epsilon(\Delta s)^3$ if $\Delta s \lesssim \epsilon$. \\


The remainder of this paper is structured as follows. Before we prove any of the results stated above for the Stokes setting, we devote Section \ref{sec:SBLap} to a similar analysis for the analogous Laplace slender body PDE. The proofs of the analogous Laplace results are much neater and provide intuition for the Stokes analysis. Sections \ref{sec:lap_calc} -- \ref{sec:REGerr} contain proofs of the analogous results of Section \ref{main_results} for the Laplace setting. As a preliminary step, in Section \ref{bessel_bds} we derive a novel bound relating to modified Bessel functions which will be used throughout the paper. We also recall some Bessel function bounds valid for small argument. 
Section \ref{sec:stokes} then concerns the Stokes setting. In Sections \ref{sec:tan_diff} and \ref{sec:nor_diff}, we prove Lemmas \ref{tan_eig_diff} and \ref{nor_eig_diff}, respectively; in Section \ref{Stokes_err} we prove Theorems \ref{thm:errS} and \ref{thm:wellpoS}, and in Sections \ref{sec:REGS} and \ref{sec:REGSerr} we show Lemma \ref{diff_REGS} and Theorem \ref{thm:errREGS}, respectively. We recall the derivation of the slender body approximation spectrum (Proposition \ref{KRop_eigs}) in Appendix \ref{sec:gotz}, while the form of the Stokes eigenvalues \eqref{eigsT} and \eqref{eigsN} are calculated in Appendix \ref{app_eval}. 

\section{Slender body Laplace PDE}\label{sec:SBLap}
Before we prove the main results of Section \ref{main_results} for the Stokes setting, we consider the Laplace analogue to slender body theory. Here the proofs are much neater and provide a simpler proof-of-concept to inform the Stokes analysis. 
Indeed, much of the existing spectral analysis related to slender body theory (see \cite{gotz2000interactions}) has been carried out for what is essentially the slender body approximation to the Laplace equation about a fiber with a straight centerline. \\

The Laplace analogue to the operator $\mc{L}_\epsilon^{-1}$ maps a scalar-valued $\overline u\in H^{1/2}(\T)$ to a scalar-valued density $f\in H^{-1/2}(\T)$ along the slender body. Given Dirichlet data $\overline u\in H^{1/2}(\T)$ along $\p\Sigma_\epsilon$, we may define the map $\mc{L}_\epsilon^{-1}$ analogously to \eqref{avg_DTN}:
\begin{equation}\label{lap_map}
\mc{L}_\epsilon^{-1} : H^{1/2}(\T) \to H^{-1/2}(\T); \quad \mc{L}_\epsilon^{-1} [\overline u](s) = \int_0^{2\pi} (\nabla u\cdot \bm{n})\, \mc{J}_\epsilon(s,\theta)\, d\theta =: f(s),
\end{equation}
where $u\in H^1(\Omega_\epsilon)$ (weakly) satisfies
\[ \Delta u=0 \quad \text{ in } \Omega_\epsilon; \qquad u\big|_{\p\Sigma_\epsilon} = \overline u(s).\]

In general, the eigenvalues $\lambda^{\rm l}$ of $\mc{L}_\epsilon^{-1}$ in the Laplace setting are given by 
\begin{equation}\label{SBlap_BVP}
\begin{aligned}
\Delta u &= 0 \hspace{1.5cm} \text{ in } \Omega_{\epsilon} \\
 \int_0^{2\pi} (\nabla u \cdot\bm{n})  \, \mathcal{J}_{\epsilon}(s,\theta) \, d\theta &= \lambda^{\rm l} u(s) \qquad \text{ on }\Gamma_\epsilon \\
u \to 0 \, &\text{ as } \abs{\bx} \to \infty.
\end{aligned}
\end{equation}
The spectral properties of $\mc{L}_\epsilon^{-1}$ in the Laplace setting are nearly identical to the Stokes case. Since all the Laplace analysis is contained within this section, we use the same notation without confusion.  \\

As in the Stokes setting, the eigenfunctions when $\Sigma_\epsilon=\mc{C}_\epsilon$ are simply the exponentials $e^{i\pi k z}$, $\abs{k}=1,2,3,\dots$, and we may calculate a simple closed-form expression for the eigenvalues $\lambda_k^{\rm l}$. 
\begin{proposition}[Laplace PDE spectrum]\label{Laplace_spec}
The eigenvalues of the Laplace slender body PDE operator $(\mc{L}_\epsilon)^{-1}$ \eqref{lap_map} along the straight, periodic cylinder $\mc{C}_\epsilon$ are given by
\begin{equation}\label{true_evals}
\lambda_k^{\rm l} = 2\pi^2 \epsilon \abs{k}\frac{K_1(\pi\epsilon \abs{k})}{K_0(\pi\epsilon \abs{k})}, \quad \abs{k}=1,2,3,\dots .
\end{equation}
where each $K_0$ and $K_1$ are, respectively, zeroth and first order modified Bessel functions of the second kind. Furthermore, the eigenvalues $\lambda_k^{\rm l}$ satisfy the growth rate
\begin{equation}\label{linear}
2\pi^2\epsilon \abs{k} \le 2\pi^2\epsilon \abs{k}\frac{\sqrt{(\pi\epsilon k)^2+\pi\epsilon \abs{k} + 1}+1}{\pi\epsilon \abs{k}+1} \le \lambda_k^{\rm l} \le 2\pi^2\epsilon \abs{k} + \pi, \qquad \abs{k}=1,2,3,\dots.
\end{equation}
\end{proposition}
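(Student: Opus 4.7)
The plan is to exploit the axisymmetry and $z$-translation invariance of $\mc{C}_\epsilon$ via separation of variables. Since $\mc{L}_\epsilon^{-1}$ commutes with $z$-translations and $\{e^{i\pi k z}\}$ is a complete orthonormal basis of $L^2(\T)$, I would seek the harmonic extension of each exponential in the form $u(r,z) = v_k(r)\, e^{i\pi k z}$; uniqueness of the exterior Dirichlet problem forces this $\theta$-independence. Substituting into $\Delta u = 0$ in cylindrical coordinates reduces to the zeroth-order modified Bessel equation
\[ r^2 v_k''(r) + r v_k'(r) - (\pi k r)^2 v_k(r) = 0, \]
whose unique (up to scaling) solution decaying as $r \to \infty$ is $v_k(r) = K_0(\pi|k|r)$. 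Taking $u = A K_0(\pi|k|r)\, e^{i\pi k z}$, using $K_0'(x) = -K_1(x)$, the outward unit normal $\bm{n} = -\be_r$ to $\Omega_\epsilon$ on $\p\mc{C}_\epsilon$, and the surface element $\mc{J}_\epsilon(z,\theta) = \epsilon$, a direct computation yields
\[ u\big|_{r=\epsilon} = A K_0(\pi\epsilon|k|)\, e^{i\pi k z}, \qquad \int_0^{2\pi} (\nabla u \cdot \bm{n})\, \mc{J}_\epsilon\, d\theta\,\big|_{r=\epsilon} = 2\pi^2 \epsilon |k|\, A K_1(\pi\epsilon|k|)\, e^{i\pi k z}. \]
Taking the ratio gives \eqref{true_evals}.

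For the growth bounds \eqref{linear}, set $x := \pi \epsilon |k|$ so that $\lambda_k^{\rm l} = 2\pi x\cdot K_1(x)/K_0(x)$. The upper bound and the refined lower bound in \eqref{linear} then translate into the pair of Bessel ratio inequalities
\[ \frac{K_1(x)}{K_0(x)} \le 1 + \frac{1}{2x}, \qquad \frac{K_1(x)}{K_0(x)} \ge \frac{\sqrt{x^2+x+1}+1}{x+1}, \qquad x > 0, \]
while the crude lower bound $\lambda_k^{\rm l} \ge 2\pi^2 \epsilon |k|$ follows from the latter by the elementary estimate $\sqrt{x^2+x+1} \ge x$. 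The natural tool is the Riccati identity
\[ W'(x) = W(x)^2 - \frac{W(x)}{x} - 1, \qquad W(x) := \frac{K_1(x)}{K_0(x)}, \]
which follows at once from the recurrences $K_0' = -K_1$ and $K_1' = -K_0 - K_1/x$. Inserting the candidate envelopes $\phi_\pm$ into the right-hand side converts each Bessel ratio inequality into an algebraic differential inequality; combined with the common limit $W,\phi_\pm \to 1$ as $x \to \infty$ (from the classical asymptotics $K_\nu(x) \sim \sqrt{\pi/(2x)}\, e^{-x}$), a Riccati comparison argument propagates the inequality over all $x > 0$.

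The hard part is the refined lower envelope $\phi_-(x) = (\sqrt{x^2+x+1}+1)/(x+1)$. Whereas the upper envelope $\phi_+(x) = 1 + 1/(2x)$ is simply the next term in the large-$x$ expansion of $W$ and is essentially classical, the lower envelope must be chosen carefully to simultaneously (i) give the sharp linear prefactor $2\pi^2 \epsilon |k|$ in \eqref{linear} (and, subsequently, in the Stokes bounds \eqref{Tgrowth} and \eqref{Ngrowth}) and (ii) capture the $1/(x|\log x|)$ blow-up of $W$ as $x \to 0^+$. Verifying the associated Riccati differential inequality for $\phi_-$ is the algebraic heart of the estimate, and is precisely the modified Bessel function bound that the paper flags as the novel ingredient to be established in Section \ref{bessel_bds}; once it is in hand, the proof of the proposition is straightforward.
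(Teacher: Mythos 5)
Your proposal is correct and follows essentially the same route as the paper: the identical separation-of-variables computation with $K_0(\pi\abs{k}r)e^{i\pi k z}$ yields \eqref{true_evals}, and the growth bounds \eqref{linear} are reduced to precisely the Bessel-ratio inequalities of Lemma \ref{lem:bessel}, whose proof in the paper is the same Riccati/ODE comparison you sketch (the paper works with $B(z)=zK_1(z)/K_0(z)$, which is just $z$ times your $W$, satisfying $B'=(B^2-z^2)/z$). The one imprecision is your anchor: since $W$ and both envelopes tend to $1$ at infinity (and in fact agree through the $1/x$ and $1/x^2$ terms), the common limit alone gives no strict ordering, so the comparison must be pinned down at a finite point or via higher-order asymptotics — which is exactly what the paper does by checking the candidate envelopes against the expansion of $B$ at $z=2$ and $z=1/10$.
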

The derivation of the eigenvalue expression \eqref{true_evals} is considerably simpler than in the Stokes setting, and is contained in Section \ref{sec:lap_calc}. \\

Now, in the Laplace analogue to slender body theory, we approximate the scalar-valued harmonic potential $u(\bx)$ outside of $\Sigma_\epsilon$ by the expression
\begin{equation}\label{SB_laplace}
u^{\SB}(\bx) = \frac{1}{4\pi} \int_{-1}^1 \frac{f(s')}{\abs{\bm{R}}} ds'.
\end{equation}
Here, as in the Stokes setting, the Green's function for the Laplace equation is integrated along the slender body centerline. Note that no Laplace analogue to the Stokes doublet correction is necessary since \eqref{SB_laplace} already satisfies the fiber integrity condition to leading order in $\epsilon$. \\

The analogous expression to \eqref{SBT_expr} for the potential along the centerline of a periodic fiber is given by 
\begin{equation}\label{gotz2}
4\pi u^\SB_{\rm C}(s) = \mc{L}_\epsilon^\SB[f](s) := -2\log (\pi\epsilon/8) f(s) + \frac{\pi}{2}\int_{\T} \frac{f(s') - f(s)}{\abs{\sin\big(\pi(s-s')/2\big)}} \, ds'.
\end{equation}

The eigenvalues of $\mc{L}_\epsilon^\SB$ along $\mc{C}_\epsilon$ in the Laplace setting are essentially calculated in \cite{gotz2000interactions}. We reiterate this calculation in Appendix \ref{sec:gotz}. 
\begin{proposition}\label{KRop_eigsL}
Consider the Laplace slender body approximation $\mc{L}^\SB_\epsilon$ \eqref{gotz2} along $\mc{C}_\epsilon$. The eigenvalues of $\mc{L}^\SB_\epsilon$ are given by
\begin{equation}\label{KR_eigs_L}
\frac{1}{\lambda_k^{\SB,{\rm l}}} = -\frac{1}{2\pi} \big(\log(\pi \epsilon \abs{k}/2) + \gamma \big). 
\end{equation}
Here $\gamma\approx 0.5772$ is the Euler gamma.
\end{proposition}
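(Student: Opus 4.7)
The plan is to diagonalize $\mc{L}_\epsilon^\SB$ against the Fourier basis $\{\psi_k(s)=e^{i\pi k s}\}_{k\in\Z\setminus\{0\}}$ of $L^2(\T)$. Since the kernel $\abs{\sin(\pi(s-s')/2)}^{-1}$ depends only on $s-s'$, every $\psi_k$ is automatically an eigenfunction, so it suffices to compute the corresponding eigenvalue $\mu_k$ of $\mc{L}_\epsilon^\SB$; the normalization $4\pi\, u^\SB_{\rm C} = \mc{L}_\epsilon^\SB[f]$ in \eqref{gotz2} then gives $1/\lambda_k^{\SB,{\rm l}} = \mu_k/(4\pi)$.

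Plugging $f = \psi_k$ into \eqref{gotz2}, the pointwise term contributes $-2\log(\pi\epsilon/8)$ to $\mu_k$ directly. For the integral term, substituting $u = s-s'$ and using the evenness of $\abs{\sin(\pi u/2)}$ (which kills the odd sine part of $e^{-i\pi k u}-1$) reduces its contribution to
\begin{equation*}
\pi \int_0^1 \frac{\cos(\pi k u)-1}{\sin(\pi u/2)}\,du.
\end{equation*}

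The decisive step is a closed-form evaluation of this integral. After the substitution $v = \pi u/2$, I would apply the telescoping trigonometric identity
\begin{equation*}
1 - \cos(2\abs{k} v) \;=\; 2\sin v \sum_{j=1}^{\abs{k}} \sin((2j-1)v),
\end{equation*}
which cancels the $\sin v$ in the denominator and reduces the integral to a finite sum of elementary $\int_0^{\pi/2}\sin((2j-1)v)\,dv = 1/(2j-1)$. Collecting prefactors yields
\begin{equation*}
\pi \int_0^1 \frac{\cos(\pi k u)-1}{\sin(\pi u/2)}\,du \;=\; -4\sum_{j=1}^{\abs{k}}\frac{1}{2j-1} \;=\; -4\bigl(H_{2\abs{k}}-\tfrac{1}{2}H_{\abs{k}}\bigr),
\end{equation*}
where $H_n$ is the $n$-th harmonic number.

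Finally, I would invoke the classical expansion $H_n = \log n + \gamma + \frac{1}{2n} - \frac{1}{12n^2}+\cdots$; the $1/n$ corrections in $H_{2\abs{k}}$ and $\tfrac{1}{2}H_{\abs{k}}$ cancel, leaving $H_{2\abs{k}} - \tfrac{1}{2}H_{\abs{k}} = \log 2 + \tfrac{1}{2}\log\abs{k} + \gamma/2 + O(1/\abs{k}^2)$. Substituting and consolidating logarithms gives
\begin{equation*}
\mu_k \;=\; -2\bigl(\log(\pi\epsilon\abs{k}/2) + \gamma\bigr) + O(1/\abs{k}^2),
\end{equation*}
and dividing by $4\pi$ produces \eqref{KR_eigs_L}. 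The main technical step is evaluating the trigonometric integral via the telescoping identity; once the harmonic sum $H_{2\abs{k}}-\tfrac{1}{2}H_{\abs{k}}$ is in hand, the Euler gamma $\gamma$ and the logarithmic shift $\log(\abs{k}/2)$ emerge simultaneously from the standard asymptotic of $H_n$, matching the form stated in the proposition.
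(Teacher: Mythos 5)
Your proposal is correct and follows essentially the same route as the paper: the Fourier modes $e^{i\pi k s}$ diagonalize the periodic convolution kernel, the integral term's eigenvalue is the odd harmonic sum $-4\sum_{j=1}^{\abs{k}}\frac{1}{2j-1}$ (which the paper quotes from Shelley--Ueda rather than rederiving), and the logarithmic formula \eqref{KR_eigs_L} is then obtained, exactly as you do, from the $H_n=\log n+\gamma+o(1)$ asymptotics, with the paper likewise treating the result as an asymptotic identification that ``coincides almost exactly'' with the exact sum formula. Your telescoping evaluation of the trigonometric integral is simply a correct, self-contained replacement for that citation.
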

Again, $\lambda_k^{\rm l}$ and $\lambda^{\SB,{\rm l}}_k$ deviate wildly as $\abs{k}$ crosses $\frac{2e^{-\gamma}}{\pi\epsilon}$, since the eigenvalues $\lambda^{\SB,{\rm l}}_k$ blow up and jump to negative values while each $\lambda_k^{\rm l}$ satisfies the growth estimate \eqref{linear}. However, the two expressions are nearly identical at low wavenumbers (see Figure \ref{fig:Compare1}). \\

We consider the same two regularizations for $\mc{L}^\SB_\epsilon$ as in the Stokes setting. We first define $(\mc{L}_\epsilon^\SB)^{-1}_N$ to be the $N$-term partial sum of the Fourier series for $(\mc{L}_\epsilon^\SB)^{-1}$ in the Laplace setting:
\begin{equation}\label{LambdaN}
(\mc{L}_\epsilon^\SB)^{-1}_N [v](z) := \sum_{\abs{k}=1}^N \lambda^{\SB,{\rm l}}_k \wh v_k e^{i \pi k z}.
\end{equation}

We also consider the Laplace version of the $\delta$-regularized slender body approximation, given by the periodization of
\begin{equation}\label{reg_lap1}
 u^\SB_{\rm C}(z) = \mc{L}_\epsilon^\delta[f](z) := \frac{1}{4\pi} \bigg(2\log(\delta)f(z) + \int_{-1}^1 \frac{f(z')}{\sqrt{(z-z')^2+\delta^2\epsilon^2}} \, dz' \bigg). 
 \end{equation}
Here $\delta>1$ is the analogous threshold needed to ensure that \eqref{reg_lap1} yields a second kind integral equation for $f$. \\

Given $u\in H^1(\T)$, we show the following error estimate for the difference between the slender body Laplace PDE solution $\mc{L}_\epsilon^{-1}[u]$ and the truncated slender body approximation $(\mc{L}_\epsilon^\SB)^{-1}_N [u]$. 
\begin{theorem}[Truncated Laplace approximation error estimate]\label{thm:err}
Let $\mc{L}_\epsilon^{-1}$ be as in \eqref{lap_map} for the Laplace slender body PDE on $\p\mc{C}_\epsilon$, and let $(\mc{L}_\epsilon^\SB)^{-1}_N$ be as in \eqref{LambdaN} for $N=C_0/\epsilon$, $C_0\le 9/(20\pi)$. Then for any $u\in H^1(\T)$, we have
\begin{equation}\label{L2est}
\norm{\mc{L}_\epsilon^{-1} [u] - (\mc{L}_\epsilon^\SB)^{-1}_N [u]}_{L^2(\T)} \le C\epsilon \norm{u}_{H^1(\T)}.
\end{equation}
Furthermore, if $u\in H^2(\T)$, we obtain 
\begin{equation}\label{L2est2}
\norm{\mc{L}_\epsilon^{-1} [u] - (\mc{L}_\epsilon^\SB)^{-1}_N [u]}_{L^2(\T)} \le C\epsilon^2 \norm{u}_{H^2(\T)}.
\end{equation}
\end{theorem}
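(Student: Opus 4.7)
The natural route is via Parseval's identity on $\T$. Since the eigenfunctions of $\mc{L}_\epsilon^{-1}$ along $\p\mc{C}_\epsilon$ are the exponentials $e^{i\pi k z}$ and $(\mc{L}_\epsilon^\SB)^{-1}_N$ is the corresponding truncated Fourier multiplier, I would first write
\begin{equation*}
\norm{\mc{L}_\epsilon^{-1}[u] - (\mc{L}_\epsilon^\SB)^{-1}_N[u]}_{L^2(\T)}^2 = \sum_{1\le \abs{k}\le N} \abs{\lambda_k^{\rm l} - \lambda_k^{\SB,{\rm l}}}^2 \abs{\wh u_k}^2 + \sum_{\abs{k}>N} \abs{\lambda_k^{\rm l}}^2 \abs{\wh u_k}^2,
\end{equation*}
and then estimate each piece separately, exploiting the close agreement between the two spectra at low wavenumbers and the linear growth of $\lambda_k^{\rm l}$ at high wavenumbers from Proposition \ref{Laplace_spec}.

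For the high-frequency tail only the bound \eqref{linear} is needed. Since $\abs{k} > N = C_0/\epsilon$ implies $\pi \le (\pi/C_0)\epsilon\abs{k}$, we get $\lambda_k^{\rm l} \le 2\pi^2\epsilon\abs{k} + \pi \le (2\pi^2+\pi/C_0)\epsilon\abs{k}$ on this range, and therefore
\begin{equation*}
\sum_{\abs{k}>N}\abs{\lambda_k^{\rm l}}^2\abs{\wh u_k}^2 \le C\epsilon^2 \sum_{\abs{k}>N} k^2 \abs{\wh u_k}^2 \le C\epsilon^2 \norm{u}_{H^1(\T)}^2.
\end{equation*}
For the $H^2$ refinement \eqref{L2est2}, the additional inequality $k^2 \le (\epsilon/C_0)^2 k^4$, available precisely because $\abs{k}>N$, upgrades the above to $C\epsilon^4\norm{u}_{H^2(\T)}^2$.

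The low-frequency piece is where the actual comparison between the PDE operator and the slender body approximation enters. The plan is to prove a Laplace eigenvalue difference bound, analogous to Lemmas \ref{tan_eig_diff} and \ref{nor_eig_diff} in the Stokes setting, of the form
\begin{equation*}
\abs{\lambda_k^{\rm l} - \lambda_k^{\SB,{\rm l}}} \le C\epsilon^2 k^2 \quad \text{for } \abs{k} \le N, \ C_0 \le 9/(20\pi),
\end{equation*}
from which
\begin{equation*}
\sum_{\abs{k}\le N}\abs{\lambda_k^{\rm l} - \lambda_k^{\SB,{\rm l}}}^2\abs{\wh u_k}^2 \le C\epsilon^4 \sum_{\abs{k}\le N} k^4\abs{\wh u_k}^2 \le C\epsilon^4 N^2 \norm{u}_{H^1(\T)}^2 \le C\epsilon^2\norm{u}_{H^1(\T)}^2,
\end{equation*}
using $k^4\le N^2 k^2$ on this range; for the $H^2$ case the $k^4$ factor is absorbed directly into $\norm{u}_{H^2(\T)}^2$ without the $N^2$ trick, giving the desired $\epsilon^4$. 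Combined with the high-frequency bound, the two estimates \eqref{L2est} and \eqref{L2est2} follow.

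The main obstacle is establishing the eigenvalue comparison itself. From $\lambda_k^{\rm l} = 2\pi^2\epsilon\abs{k}\, K_1(\pi\epsilon\abs{k})/K_0(\pi\epsilon\abs{k})$ and $1/\lambda_k^{\SB,{\rm l}} = -(2\pi)^{-1}(\log(\pi\epsilon\abs{k}/2) + \gamma)$, one reads off that both quantities share the leading form $-\pi/(\log(x/2)+\gamma)$ as $x := \pi\epsilon\abs{k}\to 0$, but matching them to quadratic order requires a careful expansion of the ratio $K_1(x)/K_0(x)$ past its leading asymptotic $1/(x(-\log(x/2)-\gamma))$. This is precisely where the novel bound on ratios of modified Bessel functions announced in Section \ref{bessel_bds} should be invoked, and the explicit threshold $C_0 \le 9/(20\pi)$ presumably arises from making that Bessel ratio estimate uniform on $x \in (0, 9/20]$.
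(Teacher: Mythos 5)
Your argument is exactly the paper's proof of this theorem: the Parseval split at $N=C_0/\epsilon$, the bound $\abs{\lambda_k^{\rm l}-\lambda_k^{\SB,{\rm l}}}\le C\epsilon^2k^2$ on the low modes (this is precisely the paper's Lemma \ref{lam_lamSB}), the linear growth \eqref{linear} on the tail, and the absorption $k^4\le N^2k^2$ (resp.\ direct absorption into $\norm{u}_{H^2}$) all match. The only inaccuracy is peripheral, in your guess at how the eigenvalue-difference lemma and the threshold arise: the paper proves Lemma \ref{lam_lamSB} not by expanding $K_1/K_0$ to quadratic order but by a Gr\"onwall comparison of the ODEs satisfied by $B(z)=zK_1(z)/K_0(z)$ and $B^{\SB}(z)=-1/(\log(z/2)+\gamma)$, with the cutoff $9/20$ (hence $C_0\le 9/(20\pi)$) chosen so that $B^{\SB}(z)+B(z)\le c_B<2$ there, which is what makes the Gr\"onwall integral close to a clean $z^2$ bound; the Bessel-ratio inequality of Lemma \ref{lem:bessel} enters this theorem only through the growth estimate \eqref{linear} used on the high-frequency tail.
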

The proof of Theorem \ref{thm:err} is given in Section \ref{lap_err}. Using Theorem \ref{thm:err}, we may again show the $\epsilon$-dependence in the following well-posedness estimate for the slender body Laplace PDE: 
\begin{theorem}[Laplace well-posedness estimate]\label{thm:wellpo}
Let $\mc{L}_\epsilon^{-1}$ be as in \eqref{lap_map} for the Laplace slender body PDE on $\p\mc{C}_\epsilon$. For $u\in H^1(\T)$, we have that $\mc{L}_\epsilon^{-1} [u]$ satisfies the bound
\begin{equation}\label{wellpo}
\norm{\mc{L}_\epsilon^{-1} [u]}_{L^2(\T)} \le \frac{C}{\abs{\log\epsilon}}\norm{u}_{H^1(\T)}. 
\end{equation}
\end{theorem}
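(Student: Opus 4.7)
The plan is to leverage Theorem \ref{thm:err} to reduce the claim to a direct Parseval bound on the truncated slender body approximation. Taking $N = C_0/\epsilon$ with $C_0 = 9/(20\pi)$, I would write
\begin{equation*}
\mc{L}_\epsilon^{-1}[u] = (\mc{L}_\epsilon^\SB)^{-1}_N[u] + \bigl(\mc{L}_\epsilon^{-1} - (\mc{L}_\epsilon^\SB)^{-1}_N\bigr)[u].
\end{equation*}
By Theorem \ref{thm:err}, the remainder has $L^2$ norm at most $C\epsilon \norm{u}_{H^1(\T)}$; since $\epsilon|\log\epsilon| \to 0$ as $\epsilon\to 0$, we have $\epsilon \le C/|\log\epsilon|$ for all sufficiently small $\epsilon$, so this contribution is already of the desired form.

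For the truncated term, the eigenfunctions of $(\mc{L}_\epsilon^\SB)^{-1}_N$ on $\T$ are the exponentials $e^{i\pi k z}$ with eigenvalues $\lambda_k^{\SB,\rm l}$ given by \eqref{KR_eigs_L}. Using Parseval and $\norm{u}_{H^1(\T)}^2 \sim \sum_k (1+k^2)|\wh u_k|^2$, the bound \eqref{wellpo} reduces to the pointwise eigenvalue inequality
\begin{equation*}
(\lambda_k^{\SB,\rm l})^2 = \frac{4\pi^2}{(\log(\pi\epsilon|k|/2)+\gamma)^2} \le \frac{C(1+k^2)}{|\log\epsilon|^2}, \qquad 1 \le |k| \le N.
\end{equation*}
For such $k$ the argument $\pi\epsilon|k|/2 \le 9/40$ lies safely below $e^{-\gamma} \approx 0.56$, so the denominator equals $|\log\epsilon| - \log|k| - a$ with $a = \log(\pi/2)+\gamma \approx 1.03$, and is positive throughout the range.

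I would then prove this inequality by splitting into two regimes. If $\log|k| \le |\log\epsilon|/2$, the denominator is comparable to $|\log\epsilon|$, so $(\lambda_k^{\SB,\rm l})^2 \lesssim 1/|\log\epsilon|^2$ and the bound follows from $1+k^2 \ge 1$. If $\log|k| > |\log\epsilon|/2$, then $1+k^2 \ge 1/\epsilon$ is exponentially large in $|\log\epsilon|$, while the denominator is only bounded below by the fixed constant $-\log(9/(20\pi)) - a \approx 0.91 > 0$; the bound then follows once $\epsilon$ is small enough that $1/(\epsilon|\log\epsilon|^2)$ dominates an order-one constant. The main care point is this second regime: the denominator cannot be bounded below by $|\log\epsilon|$ for $|k|$ near $N$, so the estimate is carried by the largeness of $1+k^2$, which requires $C_0$ small enough that $|\log C_0| > a$. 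Fortunately the cutoff $C_0 = 9/(20\pi)$ already mandated by Theorem \ref{thm:err} is small enough, so no adjustment of the truncation threshold is needed.
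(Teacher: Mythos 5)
Your argument is correct. It shares the paper's overall skeleton --- split $\mc{L}_\epsilon^{-1}[u]$ into the truncated slender body approximation plus a remainder controlled by the eigenvalue comparison --- but the two proofs diverge at the key step. The paper does not keep the cutoff $N=C_0/\epsilon$ from Theorem \ref{thm:err}: it re-enters that theorem's proof and re-truncates at $N=\floor{1/\sqrt{\epsilon}}$. With this smaller cutoff every retained eigenvalue satisfies $\abs{\lambda^{\SB,{\rm l}}_k}\le C/\abs{\log(\epsilon\abs{k})}\le C/\abs{\log\sqrt{\epsilon}}\le C/\abs{\log\epsilon}$ uniformly, so the truncated part is bounded by $\frac{C}{\abs{\log\epsilon}}\norm{u}_{L^2(\T)}$ (only the $L^2$ norm is paid there), while the intermediate estimate \eqref{intermed_est} with this $N$ gives a remainder of size $C\sqrt{\epsilon}\,\norm{u}_{H^1(\T)}$, which is absorbed. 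You instead keep $N=C_0/\epsilon$ so that Theorem \ref{thm:err} can be cited as a black box for the remainder, and you compensate for the fact that near the cutoff $\abs{\lambda^{\SB,{\rm l}}_k}$ is only $O(1)$ (the denominator is bounded below by $\abs{\log C_0}-a\approx 0.91$, not by $\abs{\log\epsilon}$) by invoking the $H^1$ weight: for $\log\abs{k}>\abs{\log\epsilon}/2$ one has $1+k^2\ge 1/\epsilon\gg\abs{\log\epsilon}^2$. Both routes are sound. Yours avoids reopening the proof of Theorem \ref{thm:err}; the paper's intermediate cutoff $1/\sqrt{\epsilon}$ yields the slightly stronger structural fact that the modes up to $1/\sqrt{\epsilon}$ are controlled by $\frac{C}{\abs{\log\epsilon}}\norm{u}_{L^2(\T)}$, with the $H^1$ norm needed only beyond that. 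The two points that genuinely require care in your route --- that the denominator stays bounded away from zero on the whole truncated range (i.e.\ $C_0<(2/\pi)e^{-\gamma}$, which $9/(20\pi)$ satisfies) and that $\epsilon\abs{\log\epsilon}^2$ is small --- are both checked correctly.
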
 

As in the Stokes setting, a similar error estimate to Theorem \ref{thm:err} holds for the $\delta$-regularized slender body Laplace approximation $(\mc{L}_\epsilon^\delta)^{-1}$.  
\begin{theorem}[$\delta$-regularized Laplace approximation error estimate]\label{thm:errREG}
Let $\mc{L}_\epsilon^{-1}$ be as in \eqref{lap_map} for the slender body Laplace PDE on $\p\mc{C}_\epsilon$, and let $(\mc{L}_\epsilon^\delta)^{-1}$ be the $\delta$-regularized slender body Laplace approximation \eqref{reg_lap1}. Given $\delta>1$, for any $u\in H^1(\T)$ we have 
\begin{equation}\label{L2estREG}
\norm{\mc{L}_\epsilon^{-1} [u] - (\mc{L}_\epsilon^\delta)^{-1} [u]}_{L^2(\T)} \le C_\delta\, \epsilon \norm{u}_{H^1(\T)}.
\end{equation}
Furthermore, if $u\in H^2(\T)$, we have
\begin{equation}\label{L2estREG2}
\norm{\mc{L}_\epsilon^{-1} [u] - (\mc{L}_\epsilon^\delta)^{-1} [u]}_{L^2(\T)} \le C_\delta\, \epsilon^2 \norm{u}_{H^2(\T)}.
\end{equation}
Here $C_\delta= C_1\delta^2(1+\log\delta) + \frac{C_2}{\log\delta}$ for $C_1,C_2$ independent of $\delta$.
\end{theorem}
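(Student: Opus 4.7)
\textbf{Proof proposal for Theorem \ref{thm:errREG}.} The plan mirrors the truncation argument used for Theorem \ref{thm:err}, but with the key difference that no wavenumber cutoff is needed: instead, the $\delta$-regularization controls the high-mode behavior on its own. The strategy has two parts: first establish the Laplace analogue of Lemma \ref{diff_REGS}, i.e., compute $\lambda_k^{\delta,{\rm l}}$ and bound $|\lambda_k^{\rm l}-\lambda_k^{\delta,{\rm l}}|$ in two regimes; second, apply Parseval to the eigenvalue differences.

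The first step is to diagonalize $(\mc{L}_\epsilon^\delta)^{-1}$. Because both $\mc{L}_\epsilon^\delta$ and $\mc{L}_\epsilon^{-1}$ are convolutions on $\T$, they share the Fourier basis $\{e^{i\pi k z}\}$. Applying $\mc{L}_\epsilon^\delta$ to $e^{i\pi k z}$, the periodized integral term reduces (via Poisson summation, or equivalently by computing the Fourier transform of $1/\sqrt{x^2+\delta^2\epsilon^2}$) to a single Bessel function, giving
\begin{equation*}
\lambda_k^{\delta,{\rm l}} = \frac{2\pi}{\log\delta + K_0(\delta\pi\epsilon|k|)}, \qquad |k|=1,2,3,\dots.
\end{equation*}
Note the condition $\delta>1$ is exactly what forces the denominator to remain bounded away from zero as $|k|\to\infty$ (since $K_0\to 0$), ensuring positivity of the eigenvalues.

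Next I would derive two bounds on $|\lambda_k^{\rm l}-\lambda_k^{\delta,{\rm l}}|$ by comparing with the formula $\lambda_k^{\rm l}=2\pi^2\epsilon|k|K_1(\pi\epsilon|k|)/K_0(\pi\epsilon|k|)$ from Proposition \ref{Laplace_spec}. A general bound of the form $|\lambda_k^{\rm l}-\lambda_k^{\delta,{\rm l}}|\le C(1/\log\delta + \epsilon|k|)$ follows from the linear growth estimate \eqref{linear} on $\lambda_k^{\rm l}$ together with $0<K_0(\delta\pi\epsilon|k|)\le K_0(\delta\pi\epsilon)$ uniformly in $|k|\ge 1$. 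The refined low-frequency bound $|\lambda_k^{\rm l}-\lambda_k^{\delta,{\rm l}}|\le C\delta^2(1+\log\delta)\epsilon^2 k^2$ valid for $|k|\le c/\epsilon$ is the more delicate piece: expanding both eigenvalues for small argument using the standard series $K_0(x)=-\log(x/2)-\gamma+O(x^2\log x)$ and $K_1(x)=1/x+O(x\log x)$, the leading $\log(\pi\epsilon|k|/2)+\gamma$ terms cancel between the two eigenvalues (this is the whole point of the $\log\delta$ term in \eqref{reg_lap1}), leaving an $O(\epsilon^2 k^2)$ remainder whose $\delta$-dependence is precisely $\delta^2(1+\log\delta)$. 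This cancellation is the main technical obstacle, since it requires tracking several subleading terms and likely invokes the Bessel function ratio bound developed in Section \ref{bessel_bds}.

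With these two bounds in hand, the $L^2$ estimate is a Parseval computation. Expanding $u=\sum \wh u_k e^{i\pi k z}$, one has
\begin{equation*}
\|\mc{L}_\epsilon^{-1}[u]-(\mc{L}_\epsilon^\delta)^{-1}[u]\|_{L^2(\T)}^2 = \sum_{k\ne 0} |\lambda_k^{\rm l}-\lambda_k^{\delta,{\rm l}}|^2\,|\wh u_k|^2.
\end{equation*}
Splitting at $|k|=c/\epsilon$, one applies the refined bound to low modes, producing a factor $\delta^4(1+\log\delta)^2\epsilon^2 \cdot (\epsilon|k|)^2 \le \delta^4(1+\log\delta)^2\epsilon^2 k^2$ inside the sum, and the general bound to high modes, where $\epsilon|k|>c$ allows the $1/\log\delta$ term to be absorbed into $(1/(c\log\delta))\cdot \epsilon|k|$. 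Summing yields
$C_\delta^2\,\epsilon^2\sum k^2|\wh u_k|^2\le C_\delta^2\epsilon^2\|u\|_{H^1}^2$ with $C_\delta$ as stated. For the $H^2$ version, one inserts an extra factor of $\epsilon|k|$ from the low-frequency side (the refined bound supplies $\epsilon^2 k^2$, while we only needed $\epsilon|k|$ for the $H^1$ estimate) and absorbs another $\epsilon|k|$ on the high side via $1\le(\epsilon|k|/c)^2$, yielding $\epsilon^4 k^4$ in the sum and hence the $\epsilon^2\|u\|_{H^2}$ bound. The trickiest step to execute cleanly will be the cancellation in the low-frequency Bessel expansion that produces the $\delta^2(1+\log\delta)$ prefactor; everything after that is a bookkeeping exercise.
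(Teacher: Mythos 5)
Your proposal follows the paper's route at the theorem level almost exactly: compute $\lambda_k^{\delta,{\rm l}}=2\pi/(\log\delta+K_0(\delta\pi\epsilon\abs{k}))$ from the convolution structure, prove a crude bound valid for all $k$ plus a refined $O(\delta^2(1+\log\delta)\epsilon^2k^2)$ bound for $\abs{k}\lesssim 1/\epsilon$ (this pair of bounds is precisely Lemma \ref{lam_lamREG}, with cutoff $\abs{k}\le \frac{2}{5\pi\epsilon}$), and then split the Parseval sum at the cutoff, absorbing the bounded high-mode terms into $\epsilon\abs{k}$ (or $\epsilon^2k^2$ for the $H^2$ case); your bookkeeping there, including the form of $C_\delta$, matches Section \ref{sec:REGerr}. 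The one place you genuinely depart from the paper is the proof of the refined low-mode bound, and it is also the one place you stop short: you propose a direct small-argument expansion of $K_0$, $K_1$ with cancellation of the common $-\log(\pi\epsilon\abs{k}/2)-\gamma$ leading term, whereas the paper compares the functions $B(z)=zK_1(z)/K_0(z)$ and $B_\delta(z)=1/(\log\delta+K_0(\delta z))$ through the Riccati-type ODEs they satisfy, bounding $\delta K_1(\delta z)-1/z$ and $B_\delta\le 1/\abs{\log z}$ via Proposition \ref{bessel_smallz} and closing with a Gr\"onwall inequality (which is what forces the specific cutoff, so that $B_\delta+B<2$). Your expansion route can be made rigorous and would yield the same $\delta^2(1+\log\delta)$ prefactor, but to do so you must control the remainders quantitatively and uniformly for $\pi\epsilon\abs{k}$ up to a fixed constant (not merely asymptotically as $\epsilon k\to0$), keeping the leading-log denominator away from its zero at $2e^{-\gamma}$; Proposition \ref{bessel_smallz} supplies exactly the needed inequalities, and the wavenumber restriction plays the same role as in the paper. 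Two small corrections: the ratio bound of Lemma \ref{lem:bessel} enters through the linear growth \eqref{linear} used in the crude high-mode bound rather than in the refined cancellation, and for the crude bound you only need $K_0>0$ (positivity of the denominator), not monotonicity in $k$. As submitted, the proposal correctly identifies the mechanism but defers the single technical step on which the theorem rests, so it is an outline rather than a complete proof of the key eigenvalue estimate.
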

Again, Theorem \ref{thm:errREG} gives the same order of convergence as $\epsilon \to 0$ as Theorem \ref{thm:err}, but holds for all wavenumbers. In addition, there exists an optimal $\delta$, $1< \delta <\infty$, which minimizes the constant $C_\delta$, determined by $\delta^2\log(\delta)^2(3+2\log\delta) = C_2/C_1$. For $0.1 \le C_2/C_1 \le 10$, the optimal $\delta$ lies in the interval $1.1 \le \delta\le 2.1$. Carrying through with the constants in the proof of Theorem \ref{thm:errREG} yields $C_2/C_1\approx 2$ in the $H^1$ estimate and $C_2/C_1\approx 4.5$ in the $H^2$ estimate, but we make no claims of optimality regarding $C_1$ and $C_2$. The proof of Theorem \ref{thm:errREG} is given in Section \ref{sec:REGerr}.  \\

To prove Theorem \ref{thm:err} in the Laplace setting, we will need the following bound for the difference $\abs{\lambda_k^{\rm l} - \lambda^{\SB,{\rm l}}_k}$ for small $\abs{k}$. 
\begin{lemma}[Difference in Laplace eigenvalues]\label{lam_lamSB}
Let $\lambda_k^{\rm l}$ be as in \eqref{true_evals} and $\lambda^{\SB,{\rm l}}_k$ as in \eqref{SBT_spec_per}. For $\abs{k}\le \frac{9}{20\pi\epsilon}$, we have
\begin{equation}
\abs{\lambda_k^{\rm l} - \lambda^{\SB,{\rm l}}_k} \le C\epsilon^2k^2.
\end{equation}
\end{lemma}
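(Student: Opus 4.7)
The plan is to use the small-argument expansions of $K_0$ and $K_1$ to identify a common leading logarithmic term in the two eigenvalue expressions, show it cancels exactly, and then verify that the surviving remainder is of order $(\pi\epsilon|k|)^2$. Set $x := \pi\epsilon|k|$ and $A(x) := -\log(x/2)-\gamma$, so that
\[ \lambda_k^{\rm l} = \frac{2\pi\, x K_1(x)}{K_0(x)}, \qquad \lambda_k^{\SB,{\rm l}} = \frac{2\pi}{A(x)}. \]
The restriction $|k| \le 9/(20\pi\epsilon)$ translates into $x \le 9/20$, which is strictly below $2e^{-\gamma} \approx 1.12$ (the singularity of $\lambda_k^{\SB,{\rm l}}$). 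In particular $A(x) \ge -\log(9/40) - \gamma > 0.9$, so both $A(x)$ and $K_0(x)$ are positive and uniformly bounded below throughout the admissible range.

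The key step is to rewrite the difference over a common denominator,
\[ \lambda_k^{\rm l} - \lambda_k^{\SB,{\rm l}} = \frac{2\pi}{A(x)\,K_0(x)}\bigl(A(x)\, xK_1(x) - K_0(x)\bigr), \]
and expand the numerator using the classical series $K_0(x) = A(x) + \tfrac{x^2}{4}(1-A(x)) + O(x^4\log(1/x))$ and $xK_1(x) = 1 - \tfrac{x^2}{2}(A(x) + \tfrac12) + O(x^4\log(1/x))$. The leading pieces $A(x)\cdot 1$ and $A(x)$ cancel identically, and after collecting terms the numerator becomes $-\tfrac{x^2 A(x)^2}{2} - \tfrac{x^2}{4} + O(x^4\log(1/x)(1+A(x)))$. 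Since the denominator $A(x)K_0(x)$ behaves like $A(x)^2$ (both factors grow together at the rate $\log(1/x)$ when $x\to 0$ and are bounded away from zero on our range), dividing yields
\[ \lambda_k^{\rm l} - \lambda_k^{\SB,{\rm l}} = -\pi x^2 - \frac{\pi x^2}{2A(x)^2} + O\!\Bigl(\frac{x^4\log(1/x)}{A(x)}\Bigr), \]
which is bounded in absolute value by $C x^2 = C\pi^2 \epsilon^2 k^2$.

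The main obstacle is turning the formal series expansions into quantitative two-sided bounds that are uniform over $x \in (0, 9/20]$: concretely, we need versions of $K_0(x) = A(x) + O(x^2)$ and $xK_1(x) = 1 + O(x^2)$ in which the implicit constants do not blow up as $x\to 0$. This is precisely the role of the small-argument Bessel estimates recalled in Section \ref{bessel_bds}. A secondary bookkeeping point is to ensure that the error term $O(x^4\log(1/x))$ in the numerator is genuinely dominated by $A(x)\,K_0(x)$ in the denominator, so that no stray $\log(1/x)$ factor survives into the final bound; this is straightforward once one notes that for small $x$ the denominator is of order $\log^2(1/x)$, which more than absorbs the $\log(1/x)$ in the numerator error. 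With these two ingredients in place, the estimate $|\lambda_k^{\rm l} - \lambda_k^{\SB,{\rm l}}| \le C\epsilon^2 k^2$ follows directly from the identity above.
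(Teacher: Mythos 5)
Your route is genuinely different from the paper's and is viable. The paper proves Lemma \ref{lam_lamSB} (Section \ref{lap_diff}) by observing that $B(z)=zK_1(z)/K_0(z)$ and $B^\SB(z)=-1/(\log(z/2)+\gamma)$ solve the Riccati-type ODEs \eqref{BODE} and \eqref{BSB_ODE}, so their difference obeys a linear differential inequality with coefficient $(B+B^\SB)/z$; the cutoff $z\le 9/20$ is chosen precisely so that $B+B^\SB\le c_B<2$, which makes the Gr\"onwall integral converge and yields $\overline B\le z^2/(2-c_B)$. You instead put the two eigenvalues over a common denominator and expand $K_0$ and $zK_1$ to second order, exploiting the exact cancellation of the leading logarithm $A(x)=-\log(x/2)-\gamma$ and the fact that the denominator $A(x)K_0(x)\sim A(x)^2$ absorbs all surviving logarithms. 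Your method is more elementary, does not need the ODE structure, and in fact gives a sharper (and essentially optimal) constant, since the true leading behaviour of the difference is $\sim \pi x^2$ rather than the paper's $2\pi x^2/(2-c_B)$; what it loses is the uniform "dynamical" mechanism that the paper reuses verbatim in the harder Stokes cases (Lemmas \ref{tan_eig_diff}, \ref{nor_eig_diff}, \ref{diff_REGS}), where closed-form expansions of the eigenvalue ratios would be much more painful.

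Two points need repair before this is a complete proof. First, your expansion of $K_0$ has a sign slip: from the series quoted in the proof of Proposition \ref{bessel_smallz}, $K_0(x)=A(x)+\tfrac{x^2}{4}\bigl(1+A(x)\bigr)+O\bigl(x^4(1+|\log x|)\bigr)$, not $\tfrac{x^2}{4}(1-A(x))$. Consequently the numerator is $-\tfrac{x^2A^2}{2}-\tfrac{x^2A}{2}-\tfrac{x^2}{4}+O(\cdot)$, i.e.\ your final display acquires an extra term $-\pi x^2/A(x)$; since $A(x)\ge -\log(9/40)-\gamma>0.9$ on the admissible range, this is still $O(x^2)$ and the conclusion is unaffected, but the intermediate formulas should be corrected. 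Second, the quantitative two-sided remainder bounds you invoke, $|K_0(x)-A(x)|\le Cx^2(1+|\log x|)$ and $|xK_1(x)-1|\le Cx^2(1+|\log x|)$, are not what Section \ref{bessel_bds} provides: Proposition \ref{bessel_smallz} states only one-sided lower bounds \eqref{K0_bd}--\eqref{K1_bd}. The two-sided versions do follow readily from the convergent series for $K_0$ and $K_1$ (alternating-type control of the tail on $0<x\le 9/20$), but you must state and justify them explicitly rather than cite the paper's proposition; once they are in place, your common-denominator identity and the lower bounds on $A(x)$ and $K_0(x)$ close the argument.
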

The proof of this lemma is much simpler than in the Stokes setting and appears in Section \ref{lap_diff}.  \\

Similarly, the proof of Theorem \ref{thm:errREG} for the $\delta$-regularized Laplace approximation will rely on the following eigenvalue bound, which holds for all $k$. 
\begin{lemma}[Difference in $\delta$-regularized Laplace eigenvalues]\label{lam_lamREG}
Let $\delta>1$. The eigenvalues $\lambda^{\delta,{\rm l}}_k$ of the $\delta$-regularized Laplace slender body approximation $(\mc{L}_\epsilon^\delta)^{-1}$ are given by
\begin{equation}\label{lambdaREG}
\lambda^{\delta,{\rm l}}_k = \frac{2\pi}{\log\delta + K_0(\delta\pi\epsilon \abs{k})} , \quad \abs{k}=1,2,3,\dots. 
 \end{equation}
For each $k$, $\lambda^{\delta,{\rm l}}_k$ satisfies
\begin{equation}\label{Lreg_diff0}
\abs{\lambda_k^{\rm l} - \lambda^{\delta,{\rm l}}_k} \le 2\pi\bigg(\frac{1}{2}+\frac{1}{\log\delta}+\pi \epsilon \abs{k} \bigg),
\end{equation}
where $\lambda_k^{\rm l}$ are the eigenvalues \eqref{true_evals} of the Laplace slender body PDE. In addition, for $\abs{k}\le\frac{2}{5\pi\epsilon}$, $\lambda^{\delta,{\rm l}}_k$ satisfies the refined bound 
\begin{equation}\label{Lreg_diff1}
\abs{\lambda_k^{\rm l} - \lambda^{\delta,{\rm l}}_k} \le 82\pi^3 \delta^2(1+\log\delta)\epsilon^2 k^2.
\end{equation}
\end{lemma}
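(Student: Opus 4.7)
To establish the eigenvalue formula \eqref{lambdaREG}, I would apply $\mc{L}_\epsilon^\delta$ to the Fourier mode $e^{i\pi k z}$ and identify the resulting multiplier. The $2\log\delta\cdot f(z)$ piece is trivial, and for the convolution piece I would use Poisson summation to rewrite the periodized integral over $[-1,1]$ as a Fourier transform over all of $\R$, then invoke the classical identity
\[
\int_{-\infty}^\infty \frac{e^{i\xi w}}{\sqrt{w^2+c^2}}\,dw \;=\; 2 K_0\!\big(c|\xi|\big)
\]
with $c=\delta\epsilon$ and $\xi=\pi k$. This gives the Fourier symbol of $\mc{L}_\epsilon^\delta$ as $(\log\delta + K_0(\delta\pi\epsilon|k|))/(2\pi)$, and reciprocation produces \eqref{lambdaREG}.

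The general bound \eqref{Lreg_diff0} then follows from the triangle inequality: since $K_0>0$ on $(0,\infty)$ and $\log\delta>0$ under the hypothesis $\delta>1$, we have $\lambda^{\delta,{\rm l}}_k \le 2\pi/\log\delta$, while the upper growth bound $\lambda_k^{\rm l} \le 2\pi^2\epsilon|k|+\pi$ from Proposition \ref{Laplace_spec} controls the PDE eigenvalue.

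The refined bound \eqref{Lreg_diff1} is the substantive part. Writing $a := \pi\epsilon|k|$ for brevity and combining the two expressions over a common denominator gives
\[
\lambda_k^{\rm l} - \lambda_k^{\delta,{\rm l}}
\;=\; \frac{2\pi\,\big[\, a K_1(a)\,(\log\delta + K_0(\delta a)) \,-\, K_0(a)\,\big]}{K_0(a)\,\big(\log\delta + K_0(\delta a)\big)}.
\]
The key manipulation is to exploit $K_0' = -K_1$ together with $\int_a^{\delta a} t^{-1}\,dt = \log\delta$ to rewrite
\[
\log\delta + K_0(\delta a)
\;=\; K_0(a) \,-\, J, \qquad J := \int_a^{\delta a}\!\Big(K_1(t) - \tfrac{1}{t}\Big)\,dt,
\]
which exposes the cancellation: substituting reduces the numerator to $K_0(a)(aK_1(a)-1) - a K_1(a)\,J$. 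Both pieces are now controlled by standard small-argument Bessel asymptotics, namely $|aK_1(a)-1|\lesssim a^2(1+|\log a|)$ and $|K_1(t) - 1/t|\lesssim t(1+|\log t|)$; these are precisely the kind of estimates collected in Section \ref{bessel_bds}. The integral gives $|J|\lesssim \delta^2 a^2(1+\log\delta + |\log a|)$, and the denominator is bounded below using $K_0(a) \ge K_0(2/5)>0$ (available because the restriction $|k|\le 2/(5\pi\epsilon)$ ensures $a\le 2/5$) and $\log\delta + K_0(\delta a) \ge \log\delta$.

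The main obstacle is quantitative: producing the explicit constant $82\pi^3$ and, more importantly, certifying that the $\delta$-dependence of the numerator is only $\delta^2(1+\log\delta)$ rather than something worse. This requires cleanly separating the $\log\delta$ contribution (which enters linearly through $\int_a^{\delta a} t^{-1}\,dt$) from the $\delta^2$ contribution (which enters through the upper endpoint of integration in $J$) and verifying that the factor $|\log a|$ in the bound on $J$ is absorbed by the $K_0(a)\sim|\log a|$ factor in the denominator over the admissible range $a\le 2/5$. No step is conceptually difficult, but the arithmetic must be tracked carefully.
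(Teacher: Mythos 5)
Your derivation of \eqref{lambdaREG} and of the crude bound \eqref{Lreg_diff0} matches the paper's (periodized kernel, Fourier transform identity for $1/\sqrt{w^2+c^2}$, then triangle inequality with $\lambda_k^{\rm l}\le 2\pi^2\epsilon\abs{k}+\pi$ and $\lambda_k^{\delta,\rm l}\le 2\pi/\log\delta$). For the refined bound \eqref{Lreg_diff1}, however, you take a genuinely different route. The paper never forms the difference quotient explicitly: it compares the functions $B(z)=zK_1(z)/K_0(z)$ and $B_\delta(z)=1/(\log\delta+K_0(\delta z))$ through the ODEs they satisfy ($B'=\tfrac1z(B^2-z^2)$ and $B_\delta'=\delta K_1(\delta z)B_\delta^2$), bounds the forcing mismatch by the small-argument estimates of Proposition \ref{bessel_smallz}, and closes with a Gr\"onwall inequality on $0\le z\le 2/5$, where the cutoff $2/5$ is chosen precisely so that $B_\delta+B\le c_{\rm l,2}<2$ makes the Gr\"onwall exponent integrable; the constant $82\pi^3$ comes out of $5/(2-c_{\rm l,2})$. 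Your approach instead uses the exact identity $\log\delta+K_0(\delta a)=K_0(a)-\int_a^{\delta a}(K_1(t)-1/t)\,dt$ to expose the cancellation directly and then estimates each piece. This is more elementary and self-contained for the Laplace case, and it makes transparent where the $\log\delta$ and $\delta^2$ contributions come from; the payoff of the paper's ODE/Gr\"onwall formulation is that the identical template transfers verbatim to the tangential and normal Stokes eigenvalues (Sections \ref{sec:tan_diff}, \ref{sec:nor_diff}, \ref{sec:REGS}), where a closed-form common-denominator computation would be far messier.

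One quantitative caveat: the denominator lower bounds you state ($K_0(a)\ge K_0(2/5)$ and $\log\delta+K_0(\delta a)\ge\log\delta$) are not enough by themselves. After the cancellation, the term $\big(aK_1(a)-1\big)/\big(\log\delta+K_0(\delta a)\big)$ carries a factor $a^2(1+\abs{\log a})$ in its numerator with no remaining $K_0(a)$ to absorb the $\abs{\log a}$, and bounding the denominator only by $\log\delta$ leaves a stray $\abs{\log a}\sim\abs{\log\epsilon}$ and a $1/\log\delta$ blow-up as $\delta\to1^+$, neither of which is compatible with the claimed form $C\,\delta^2(1+\log\delta)\,\epsilon^2k^2$. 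The fix is exactly \eqref{K0_bd} applied at $\delta a$: for $\delta a<1$, $K_0(\delta a)\ge-\log(\delta a)$ gives $\log\delta+K_0(\delta a)\ge\abs{\log a}$, and for $\delta a\ge1$ the same bound holds trivially since then $\log\delta\ge\abs{\log a}$. With that replacement (and your $K_0(a)\ge\abs{\log a}$ to absorb the $\abs{\log a}$ inside $J$), the assembly goes through on $a\le 2/5$ and yields a constant comfortably below $82\pi^3$, so the argument is sound once this sharper lower bound is used.
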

Again, the proof of Lemma \ref{lam_lamREG} is simpler than the analogous Stokes case, and appears in Section \ref{sec:REG}.
Note that $\delta>1$ ensures that $\lambda^{\delta,{\rm l}}_k$ are positive and bounded as $\abs{k}\to\infty$, yielding a true second kind integral equation \eqref{reg_lap1} for $f(z)$. 


\subsection{Preliminaries: Bessel function bounds}\label{bessel_bds}
Due to the form of the eigenvalues of the slender body PDE operator $\mc{L}_\epsilon^{-1}$ (see Propositions \ref{Laplace_spec} and \ref{Stokes_spec}), the proofs of Lemmas \ref{lam_lamSB}, \ref{tan_eig_diff}, and \ref{nor_eig_diff} rely on refined upper and lower bounds for the growth rate of the following ratio of modified Bessel functions. Although the bound in Lemma \ref{lem:bessel} is similar to the inequalities derived in \cite{baricz2013turan, gronwall1932inequality}, we were not able to find the results necessary for our purposes in the literature. Here we prove a novel version of these Bessel function inequalities which will be used throughout the paper. 

\begin{lemma}[Ratio of modified Bessel functions]\label{lem:bessel}
Let $K_0(z)$ and $K_1(z)$ denote second-kind modified Bessel functions of zeroth and first order, respectively. For $z>0$, the ratio $K_1/K_0$ satisfies the bounds
\begin{equation}\label{bessel}
\frac{\sqrt{z^2+z+1}+1}{z+1} < \frac{K_1(z)}{K_0(z)} < 1 + \frac{1}{2z}.
\end{equation}
\end{lemma}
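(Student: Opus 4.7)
The plan is to derive both inequalities from the Riccati-type ODE satisfied by $R(z):=K_1(z)/K_0(z)$. Using the standard derivative identities $K_0'=-K_1$ and $K_1'=-K_0-K_1/z$, a direct computation yields
\[
R'(z) = R(z)^2 - \frac{R(z)}{z} - 1, \qquad z>0.
\]
I also record two pieces of boundary information: first, $R(z)\to+\infty$ as $z\to 0^+$, since $K_1(z)\sim 1/z$ while $K_0(z)\sim -\log z$; and second, the four-term asymptotic $R(z)=1+\tfrac{1}{2z}-\tfrac{1}{8z^2}+\tfrac{1}{8z^3}+O(z^{-4})$ as $z\to\infty$, which follows from the classical asymptotic series for $K_0$ and $K_1$.

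For each candidate bound $\phi$ I set $h:=\phi-R$ (upper bound) or $h:=R-\phi$ (lower bound) and compute the defect $r(z):=\phi(z)^2-\phi(z)/z-1-\phi'(z)$. Substituting $R=\phi\mp h$ into the Riccati shows that $h$ satisfies an ODE whose value at any point where $h=0$ is exactly $\mp r$; explicitly, $h'=\mp r+(2\phi-1/z)h\pm h^2$. The common strategy is then (i) to verify that the sign of $r$ is constant on $(0,\infty)$ and matches the sign of $h$ at the endpoints, and (ii) to run a largest-zero contradiction: if $h(z_*)=0$ with $h>0$ on $(z_*,\infty)$, the one-sided derivative $h'(z_*)\ge 0$ conflicts with the ODE value $h'(z_*)=\mp r(z_*)$.

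For the upper bound, I take $\phi_U(z)=1+\tfrac{1}{2z}$; a short calculation gives $r_U(z)=\tfrac{1}{4z^2}>0$, and the ODE collapses to $h_U'=-\tfrac{1}{4z^2}+2h_U-h_U^2$. The asymptotics above give $h_U\sim\tfrac{1}{8z^2}\to 0^+$ as $z\to\infty$ and $h_U\to+\infty$ as $z\to 0^+$, so the largest-zero contradiction finishes this half. For the lower bound, I take $\phi_L(z)=\frac{\sqrt{z^2+z+1}+1}{z+1}$, which by construction satisfies the quadratic relation $(z+1)\phi_L^2-2\phi_L-z=0$. Writing $u:=\sqrt{z^2+z+1}$ and simplifying, the defect reduces to
\[
r_L(z) = \frac{(2z^3-z^2+z-2)-2u(z^2-z+1)}{2uz(z+1)^2}.
\]

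The main obstacle — and the key algebraic step — is to recognize the polynomial identity
\[
\big[2u(z^2-z+1)\big]^2 - \big[2z^3-z^2+z-2\big]^2 = 3z^2(z+1)^2,
\]
which I will verify by direct expansion. Since $2u(z^2-z+1)>0$ on $(0,\infty)$, this identity forces the numerator of $r_L$ to be strictly negative, so $r_L<0$ throughout. Combined with the refined asymptotic $h_L:=R-\phi_L\sim\tfrac{3}{16z^3}\to 0^+$ as $z\to\infty$ (the two quantities $R$ and $\phi_L$ agree to orders $1$, $1/z$, and $1/z^2$, so three terms of the asymptotic expansion are needed) and $h_L\to+\infty$ as $z\to 0^+$, the same largest-zero argument then gives $h_L>0$ on $(0,\infty)$. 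Everything else — the Riccati manipulation, endpoint asymptotics, and sign contradiction — is routine and mirrors the upper bound proof.
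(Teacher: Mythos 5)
Your argument is correct and is essentially the paper's: the paper works with $B(z)=zK_1(z)/K_0(z)$, which satisfies $B'=(B^2-z^2)/z$ (your Riccati under the substitution $B=zR$), shows each candidate bound is a strict sub/supersolution (your defect $r$ is exactly the paper's differential-inequality residual up to a factor $-z$), and propagates the comparison from an anchor supplied by the large-$z$ expansion of $B$. Your only deviations — anchoring at $z\to\infty$ (where, as you note, the lower bound requires matching through the $1/z^2$ term, giving $h_L\sim 3/(16z^3)$) and certifying the sign of the lower-bound defect via the identity $\big[2u(z^2-z+1)\big]^2-\big[2z^3-z^2+z-2\big]^2=3z^2(z+1)^2$ instead of the paper's direct estimates with $\sqrt{z^2+z+1}>z+\tfrac12$ — are cosmetic, and both check out.
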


\begin{proof}
We begin by considering the function
\begin{equation}\label{Bdef}
B(z) = z \frac{K_1(z)}{K_0(z)} = -z \frac{K_0'(z)}{K_0(z)}
\end{equation}
for $z\in \R_+$, which satisfies the ODE
\begin{equation}\label{BODE}
\frac{dB}{dz} = \frac{1}{z}(B^2- z^2), \; B(0)=0.
\end{equation}

Furthermore, by standard properties of the modified Bessel functions $K_0$ and $K_1$, we have that $B$ satisfies 
\begin{equation}\label{taylorB}
B(z) = z + \frac{1}{2} - \frac{1}{8z} + \frac{1}{8z^2} + \cdots \;  \text{ as }z\to \infty.
\end{equation}

Now, suppose that some function $g(z)$ satisfies
\begin{equation}\label{Bcond1}
  \left\{ \begin{array}{ll}
    \frac{dg}{dz} > \frac{1}{z}(g^2-z^2) \quad \text{ for } z>0, \\
    \exists z_0>0 \text{ such that } g(z) < B(z) \text{ for } z\ge z_0.
    \end{array}  \right. 
 \end{equation}
Then $g(z)<B(z)$ for all $z>0$, which can be seen by considering solutions $y(z)$ to the ODE 
\[ \frac{dy}{dz} = -\frac{1}{z}(y^2 - z^2).  \]
Note that, evaluating $dy/dz$ at the curve $y=g(z)$, we have
\[ \frac{dy}{dz}\bigg|_{y=g(z)} = - \frac{1}{z}(g^2-z^2) > -\frac{dg}{dz} \] 
for every $z>0$. Thus if $y>-g$ for some $z>0$, then $y>-g$ for every $z>0$. This is exactly the second condition of \eqref{Bcond1}. \\

Likewise, if 
\begin{equation}\label{Bcond2}
  \left\{ \begin{array}{ll}
    \frac{dg}{dz} < \frac{1}{z}(g^2-z^2) \quad \text{ for } z>0, \\
    \exists z_0>0 \text{ such that } g(z) > B(z) \text{ for } z\ge z_0,
    \end{array}  \right. 
 \end{equation}
then $g(z)>B(z)$ for all $z>0$. \\

Let $g(z) = \frac{z(\sqrt{z^2+z+1}+1)}{z+1}$. Then 
\begin{align*}
 \frac{dg}{dz} - \frac{1}{z}(g^2-z^2) &= \frac{-z^3+\frac{1}{2}z^2-\frac{1}{2}z+1+(z^2-z+1)\sqrt{z^2+z+1}}{(z+1)^2\sqrt{z^2+z+1}}\\
 &> \frac{z^2+\frac{1}{2}z+\frac{3}{2}-z\sqrt{z^2+z+1}}{(z+1)^2\sqrt{z^2+z+1}} \\
 &= \frac{9z^2+6z+9}{\big(4z^2+ 2z+ 6+ 4z\sqrt{z^2+z+1} \big) (z+1)^2\sqrt{z^2+z+1}}>0 
 \end{align*}
for all $z>0$. Here we have used that $\sqrt{z^2+z+1}> \sqrt{z^2+z+\frac{1}{4}}=z+\frac{1}{2}$. Also, by comparison with \eqref{taylorB}, we have $g(2)=\frac{2(1+\sqrt{7})}{3}<\frac{39}{16} < B(2)$, and thus $g$ satisfies \eqref{Bcond1}, so $B(z)>\frac{z\sqrt{z^2+z+1}}{z+1}$ for all $z>0$. \\

Now let $g(z) = z+ \frac{1}{2}$. Then
\[ \frac{dg}{dz} - \frac{1}{z}(g^2-z^2) = 1 - \frac{1}{z}\bigg(z+\frac{1}{4} \bigg) = -\frac{1}{4z} < 0\]
for $z>0$. Using the expansion \eqref{taylorB}, we note that $g(1/10) > B(1/10)$, and therefore $g$ satisfies \eqref{Bcond2}. Thus $B(z)<z+\frac{1}{2}$ for all $z>0$. \\

Dividing through by $z$ then yields \eqref{bessel}.
\end{proof}

In addition to Lemma \ref{lem:bessel}, which holds for all $z>0$, we will require bounds for modified Bessel functions which hold for small $z$. In particular, we will make use of the following proposition. 
\begin{proposition}[Bessel function bounds for small $z$]\label{bessel_smallz}
Let $K_0(z)$ and $K_1(z)$ denote second-kind modified Bessel functions of zeroth and first order, respectively.
For $0<z<1$, the following lower bounds hold: 
\begin{align}
\label{K0_bd}
K_0(z) \ge -\log z \\
\label{K1_bd}
zK_1(z) \ge 1- z^2(1+\abs{\log z})
\end{align}
\end{proposition}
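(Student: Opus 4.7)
The plan is to handle the two bounds in sequence, since the second relies on (an integral form of) the first. The main tools are the standard series representation
\[ K_0(z) = -(\gamma+\log(z/2))\,I_0(z) + \sum_{k=1}^\infty \frac{(z/2)^{2k}}{(k!)^2}H_k,\qquad H_k:=\sum_{m=1}^k\frac{1}{m}, \]
together with the recurrence $\frac{d}{dz}[zK_1(z)]=-zK_0(z)$ and the small-argument limit $zK_1(z)\to 1$ as $z\to 0^+$.

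For the first inequality I would rearrange the series as
\[ K_0(z)+\log z = (\log 2-\gamma)\,I_0(z) + (-\log z)(I_0(z)-1) + \sum_{k=1}^\infty \frac{(z/2)^{2k}}{(k!)^2}H_k. \]
On $z\in(0,1)$ each of the three pieces is manifestly nonnegative (since $\log 2 > \gamma$, $I_0(z)\ge 1$, $-\log z > 0$, and $H_k > 0$), so $K_0(z)\ge -\log z$ follows immediately.

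For the second inequality I would integrate the recurrence from $0$ to $z$ and use the small-argument limit to write
\[ zK_1(z) = 1 - \int_0^z tK_0(t)\,dt, \]
reducing the task to a pointwise upper bound on $K_0$ on $(0,1]$. I would then observe that $\frac{d}{dt}(K_0(t)+\log t)=(1-tK_1(t))/t$, and that $tK_1(t)<1$ for all $t>0$ (since $(tK_1(t))'=-tK_0(t)<0$ and $tK_1(t)\to 1$ as $t\to 0^+$), so $K_0(t)+\log t$ is strictly increasing on $(0,\infty)$ and hence bounded above by $K_0(1)$ on $(0,1]$. A crude estimate from the integral representation $K_0(1)=\int_0^\infty e^{-\cosh t}\,dt \le e^{-1}\int_0^\infty e^{-t^2/2}\,dt = e^{-1}\sqrt{\pi/2}<\tfrac12$ (using $\cosh t\ge 1+t^2/2$) then yields $K_0(t)\le \tfrac12-\log t$ on $(0,1]$. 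Integrating,
\[ \int_0^z tK_0(t)\,dt \le \int_0^z t\paren{\tfrac12-\log t}\,dt = \frac{z^2}{2}(1-\log z) = \frac{z^2}{2}(1+\abs{\log z}), \]
and hence $zK_1(z)\ge 1-\frac{z^2}{2}(1+\abs{\log z})\ge 1-z^2(1+\abs{\log z})$.

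The main obstacle is really just bookkeeping: one needs an explicit numerical bound on $K_0(1)$ small enough to be absorbed into the constant $1$ in the claimed inequality. The $\cosh t\ge 1+t^2/2$ estimate above does the job with comfortable slack, so no subtle estimate is required.
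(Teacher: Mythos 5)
Your proof is correct, and the second half takes a genuinely different route from the paper. For \eqref{K0_bd} you and the paper are doing essentially the same thing — inspecting the small-$z$ series of $K_0$ — but your rearrangement $K_0(z)+\log z=(\log 2-\gamma)I_0(z)+(-\log z)(I_0(z)-1)+\sum_{k\ge1}\frac{(z/2)^{2k}}{(k!)^2}H_k$ makes the term-by-term positivity an exact statement about the full convergent series, whereas the paper argues from a truncated expansion with trailing dots. For \eqref{K1_bd} the paper again works from the expansion of $zK_1(z)$ and absorbs the higher-order terms by doubling the quadratic one; you instead integrate the identity $\frac{d}{dz}\big[zK_1(z)\big]=-zK_0(z)$ to get $zK_1(z)=1-\int_0^z tK_0(t)\,dt$, prove the upper bound $K_0(t)\le \frac12-\log t$ on $(0,1]$ via monotonicity of $K_0(t)+\log t$ (which follows from $tK_1(t)<1$) together with the crude estimate $K_0(1)\le e^{-1}\sqrt{\pi/2}<\frac12$, and then integrate. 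This costs a bit more bookkeeping but buys a fully self-contained, rigorous argument with no tail to wave away, and it even yields the slightly stronger constant $\frac12$ in front of $z^2(1+\abs{\log z})$. One small imprecision: your opening remark that the second bound relies on ``an integral form of the first'' is not quite right — what you actually use is the separately derived \emph{upper} bound $K_0(t)\le\frac12-\log t$, not the lower bound \eqref{K0_bd} — but this does not affect the correctness of the argument.
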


\begin{proof}
The modified Bessel functions $K_0(z)$ and $K_1(z)$ enjoy the following asymptotic expansions near $z=0$:
\begin{align*}
K_0(z) &= \log 2-\gamma - \log z + \frac{1}{4} (1+\log 2-\gamma - \log z) z^2 +\frac{1}{128}(3+2(\log2-\gamma) - 2\log z)z^4+ \dots \\ 
 zK_1(z) &= 1 + \frac{1}{4} (-1 - 2(\log 2-\gamma)+2\log z)z^2 + \frac{1}{64}(-5-4(\log 2-\gamma) + 4\log z)z^4+\dots  
\end{align*}
Since $\log z<0$ for $0<z<1$ and $0<\log 2-\gamma\approx 0.1159 < \frac{1}{8}$, the bound \eqref{K0_bd} follows immediately from the expansion for $K_0$. Furthermore, we have
\[ zK_1(z)-1 \ge 2\bigg(-\frac{1}{4}\bigg(1+\frac{1}{8} - 2\log z\bigg)z^2 \bigg) \ge -(1+\abs{\log z})z^2. \]
\end{proof}

\subsection{Calculation of Laplace spectrum}\label{sec:lap_calc}
We show that the spectrum of the Laplace slender body PDE operator $\mc{L}_\epsilon^{-1}$, given by \eqref{lap_map}, is of the form \eqref{true_evals} in Proposition \ref{Laplace_spec}. To do so, we solve for $u$ satisfying 
\begin{equation}\label{laplace_eval}
\begin{aligned}
\Delta u &=0 \quad \text{in }(\R^2\times \T)\backslash\overline{\mc{C}_\epsilon}\\
u|_{\p \mc{C}_\epsilon}&=e^{\pi i kz}, \quad \abs{k}=1,2,3,\dots 
\end{aligned}
\end{equation}

We compute the eigenvalues $\lambda$ satisfying 
\[ \int_0^{2\pi} \frac{\p u}{\p \nu}\bigg|_{\p \mc{C}_\epsilon} \epsilon d\theta= \int_0^{2\pi} -\p_r u \big|_{\p \mc{C}_\epsilon} \epsilon d\theta = \lambda e^{\pi i kz},\]
where $\nu$ is the unit normal to $\p \mc{C}_\epsilon$ pointing into the cylinder. \\

First, due to the symmetry of \eqref{laplace_eval} with respect to $\theta$, we look for a radial solution $u(r,z)$ to \eqref{laplace_eval}, which we write as 
\[ u(r,z) = U(r)e^{\pi i kz}; \quad U(\epsilon)=1, \, U(r)\to 0 \text{ as }r\to\infty.\]

Using this ansatz in \eqref{laplace_eval}, rewritten in cylindrical coordinates, we have that $U$ satisfies the ODE 
\[ \p_{rr}U + \frac{1}{r} \p_rU - \pi^2k^2U = 0.\]
Therefore
\[ U(r) = c_1 I_0(\pi r\abs{k}) + c_2K_0(\pi r\abs{k}),\]
where $I_0$ and $K_0$ are modified Bessel functions of the first and second kind, respectively. Using the boundary conditions for $U$, we thus obtain
\[ U(r) = \frac{K_0(\pi r\abs{k})}{K_0(\pi\epsilon \abs{k})}.\]

Then
\[ \int_0^{2\pi} -\p_r u \big|_{\p \mc{C}_\epsilon} \epsilon d\theta= \int_0^{2\pi} \pi \abs{k}\frac{K_1(\pi\epsilon \abs{k})}{K_0(\pi\epsilon \abs{k})} e^{\pi ikz} \epsilon d\theta = 2\pi^2 \epsilon \abs{k}\frac{K_1(\pi\epsilon \abs{k})}{K_0(\pi\epsilon \abs{k})}e^{\pi ikz}. \]

Thus the eigenvalues $\lambda_k^{\rm l}$ of the map $\mc{L}_\epsilon^{-1}$ have the form \eqref{true_evals}. \\

The linear growth rate \eqref{linear} of $\lambda_k^{\rm l}$ follows immediately from Lemma \ref{bessel}.

\subsection{Proof of Lemma \ref{lam_lamSB}: difference in Laplace eigenvalues}\label{lap_diff}
To show that the eigenvalues $\lambda^{\rm l}_k$ and $\lambda^{\SB,{\rm l}}_k$ of the Laplace slender body PDE and approximation, respectively, agree to order $\epsilon^2k^2$ for sufficiently low wavenumbers $\abs{k}$, we make use of the ODEs satisfied by the continuous functions along which the eigenvalues \eqref{true_evals} and \eqref{KR_eigs_L} lie.
 
\begin{proof}
We again consider the function $B(z)$ as in \eqref{Bdef}, and recall the ODE \eqref{BODE} satisfied by $B$. In addition, we consider the function
\begin{align*}
B^\SB(z) = -\frac{1}{\log(z/2) + \gamma},
\end{align*}
which also satisfies an ODE of the form
\begin{equation}\label{BSB_ODE}
\frac{dB^\SB}{dz} = \frac{1}{z}( B^\SB)^2, \; B^\SB(0) = 0.
\end{equation}

The two functions $B$ and $B^\SB$ are plotted in Figure \ref{fig:Compare1}. \\
\begin{figure}[!h]
\centering
\includegraphics[scale=0.5]{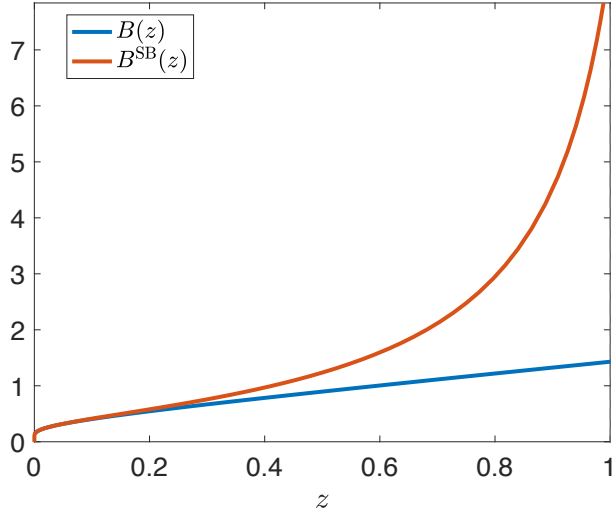}\\
\caption{Plot of the functions $B(z)$ (blue line) and $B^\SB(z)$ (red line). We can see how $B$ and $B^\SB$ correspond closely for small $z$, but diverge quickly as $z$ increases. Note that the eigenvalues of $\mc{L}_\epsilon^{-1}$ and $(\mc{L}_\epsilon^\SB)^{-1}$ correspond to $B$ and $B^\SB$, respectively, by $\lambda_k^{\rm l}=2\pi B(\pi\epsilon k)$ and $\lambda^{\SB,{\rm l}}_k=2\pi B^\SB(\pi\epsilon k)$. }
\label{fig:Compare1}
\end{figure}

Since $B$ and $B^\SB$ clearly deviate as $z\to 2e^{-\gamma}\approx 1.1229$, we are interested in comparing $B$ and $B^\SB$ only up to some $z<2e^{-\gamma}$; in particular, we will consider only $z\le 9/20$. The reason for this cutoff is the following. Note that $B^\SB+B$ is strictly increasing for $0<z<2e^{-\gamma}$, and for $z\le 9/20$, we have $B^\SB+B\le c_B:=B^\SB(9/20)+B(9/20) \approx 1.9339$. The Gr\"onwall argument we will use relies on $c_B<2$, so we take $z=9/20$ as a (relatively round) cutoff. \\

Now, due to the form of the ODEs \eqref{BODE} and \eqref{BSB_ODE}, the difference $\overline B = B^\SB-B$ is positive for $0<z<2e^{-\gamma}$ and satisfies
\begin{equation}\label{diff_ineq}
\frac{d\overline B}{dz} = \frac{1}{z}(B^\SB+B)\overline B + z \le \frac{c_B}{z}\overline B + z, \quad \overline B(0)=0.
\end{equation}

Using a Gr\"onwall inequality, we have 
\begin{equation}\label{diff}
\overline B(z) \le \int_0^z e^{c_B(\log z - \log s)} s \, ds = z^{c_B}\int_0^z s^{1-c_B} \, ds = \frac{1}{2-c_B} z^2. 
\end{equation}

Taking $z=\pi\epsilon \abs{k}$, $\abs{k}=1,2,3,\dots$, \eqref{diff} implies that the difference $\lambda^{\SB,{\rm l}}_k - \lambda_k^{\rm l}$ satisfies 
\begin{equation}\label{lamSB_lam}
\abs{\lambda^{\SB,{\rm l}}_k - \lambda_k^{\rm l}} \le \frac{2\pi^3}{2-c_B}\epsilon^2k^2.
\end{equation}
\end{proof}

\subsection{Proof of Theorems \ref{thm:err} and \ref{thm:wellpo}: Laplace error estimate and well-posedness}\label{lap_err}
With Lemma \ref{lam_lamSB}, we are equipped to prove the error bound between the Laplace slender body PDE operator $\mc{L}_\epsilon^{-1}$ and the truncated Laplace slender body approximation $(\mc{L}_\epsilon^\SB)^{-1}_N$ stated in Theorem \ref{thm:err}. The proof of Theorem \ref{thm:err} will in turn allow us to prove the $\epsilon$-dependence in the well-posedness estimate of Theorem \ref{thm:wellpo}. 
\begin{proof}[Proof of Theorem \ref{thm:err}:]
For $u \in H^1(\T)$, we have
\begin{equation}\label{intermed_est}
\begin{aligned}
\norm{\mc{L}_\epsilon^{-1} [u] - (\mc{L}_\epsilon^\SB)^{-1}_N [u]}_{L^2(\T)}^2 &= \sum_{\abs{k}=1}^N (\lambda_k^{\rm l} - \lambda^{\SB,{\rm l}}_k)^2 \abs{\wh u_k}^2 + \sum_{\abs{k}=N+1}^\infty (\lambda_k^{\rm l})^2\abs{\wh u_k}^2 \\
&\le C\sum_{\abs{k}=1}^N \epsilon^4 k^4 \abs{\wh u_k}^2 + C\sum_{\abs{k}=N+1}^\infty (1+\epsilon \abs{k})^2 \abs{\wh u_k}^2 \\
&\le C\epsilon^4 N^2 \sum_{\abs{k}=1}^N k^2 \abs{\wh u_k}^2 + C\sum_{\abs{k}=N+1}^\infty \bigg(\frac{1}{k^2}+\epsilon^2 \bigg) k^2\abs{\wh u_k}^2 \\
&\le C\bigg(\epsilon^4 N^2 + \frac{1}{N^2} + \epsilon^2\bigg) \norm{u}_{H^1(\T)}^2,
\end{aligned}
\end{equation}
where we have used Lemma \ref{lam_lamSB} and the growth rate \eqref{linear} in the second line. If $u\in H^2(\T)$, we instead have
\begin{equation}\label{intermed_est2}
\begin{aligned}
\norm{\mc{L}_\epsilon^{-1} [u] - (\mc{L}_\epsilon^\SB)^{-1}_N [u]}_{L^2(\T)}^2 &\le C\epsilon^4\sum_{\abs{k}=1}^N k^4 \abs{\wh u_k}^2 +  C\sum_{\abs{k}=N+1}^\infty \bigg(\frac{1}{k^4}+\frac{\epsilon^2}{k^2} \bigg) k^4\abs{\wh u_k}^2 \\
&\le C\bigg(\epsilon^4 + \frac{1}{N^4} + \frac{\epsilon^2}{N^2}\bigg) \norm{u}_{H^2(\T)}^2.
\end{aligned}
\end{equation}
In both \eqref{intermed_est} and \eqref{intermed_est2}, taking $N=C/\epsilon$ for some constant $C$ will yield the optimal dependence on $\epsilon$ as $\epsilon\to 0$. Due to Lemma \ref{lam_lamSB}, we have that $C$ must satisfy $C\le 9/(20\pi)$, yielding Theorem \ref{thm:err}. 
\end{proof}

Using Theorem \ref{thm:err}, we can now show the estimate of Theorem \ref{thm:wellpo}.
\begin{proof}[Proof of Theorem \ref{thm:wellpo}:]
The proof will rely on the proof of Theorem \ref{thm:err}. Begin by noting that if we instead choose $N= \floor{1/\sqrt{\epsilon}}$ in the definition \eqref{LambdaN} of $(\mc{L}_\epsilon^\SB)^{-1}_N$, where $\floor{q}$ denotes the nearest integer less than or equal to $q$, we then have that, for any $u\in L^2(\T)$, $(\mc{L}_\epsilon^\SB)^{-1}_N [u]$ satisfies 
\begin{equation}\label{1st_est}
\begin{aligned}
\norm{(\mc{L}_\epsilon^\SB)^{-1}_N [u]}_{L^2(\T)}^2 &= \sum_{\abs{k}=1}^N \abs{\lambda^{\SB,{\rm l}}_k}^2\abs{\wh u_k}^2 \le C\sum_{\abs{k}=1}^N \frac{1}{\log(\epsilon\abs{k})^2} \abs{\wh u_k}^2 \\
&\le C\frac{1}{\log(\epsilon^{1/2})^2}\sum_{\abs{k}=1}^N \abs{\wh u_k}^2 \le \frac{C}{\abs{\log\epsilon}^2}\norm{u}_{L^2(\T)}^2.
\end{aligned}
\end{equation}
Here we have used the definition \eqref{SBT_spec_per} of $\lambda^{\SB,{\rm l}}_k$ in the first inequality. \\

Furthermore, using $N=\floor{1/\sqrt{\epsilon}}$ in estimate \eqref{intermed_est} from the proof of Theorem \ref{thm:err}, we have
\begin{equation}\label{2nd_est}
\norm{\mc{L}_\epsilon^{-1}[ u] - (\mc{L}_\epsilon^\SB)^{-1}_N [u]}_{L^2(\T)} \le C\bigg( \epsilon^2 N + \frac{1}{N} + \epsilon\bigg) \norm{u}_{H^1(\T)} = C\sqrt{\epsilon}\norm{u}_{H^1(\T)}.
\end{equation}

Combining estimates \eqref{1st_est} and \eqref{2nd_est}, we then obtain 
\begin{align*}
\norm{\mc{L}_\epsilon^{-1} [u]}_{L^2(\T)} &\le \norm{\mc{L}_\epsilon^{-1} [u] - (\mc{L}_\epsilon^\SB)^{-1}_N [u]}_{L^2(\T)} + \norm{(\mc{L}_\epsilon^\SB)^{-1}_N [u]}_{L^2(\T)} \le \frac{C}{\abs{\log\epsilon}}\norm{u}_{H^1(\T)}. 
\end{align*}
\end{proof}

\subsection{Proof of Lemma \ref{lam_lamREG}: difference in $\delta$-regularized Laplace eigenvalues}\label{sec:REG}
 In this section, we prove Lemma \ref{lam_lamREG} bounding the difference between the Laplace slender body PDE eigenvalues $\lambda^{\rm l}_k$ \eqref{true_evals} and the eigenvalues $\lambda^{\delta,{\rm l}}_k$ of the $\delta$-regularized Laplace approximation \eqref{reg_lap1}. As in the proof of Lemma \ref{lam_lamSB}, we rely on the ODE satisfied by the continuous versions of the expressions for $\lambda^{\rm l}_k$ and $\lambda^{\delta,{\rm l}}_k$. 

\begin{proof}[Proof of Lemma \ref{lam_lamREG}]
We first calculate the form of $1/\lambda^{\delta,{\rm l}}_k$, the eigenvalues of the (forward) operator $\mc{L}_\epsilon^\delta$ mapping $f$ to $u$. The integral term of \eqref{reg_lap1} may be written as a convolution with the kernel  
\begin{equation}\label{kernel}
 \mc{K}(z) = \frac{1}{4\pi}\frac{1}{\sqrt{z^2+ \delta^2\epsilon^2} };  
 \end{equation}
therefore, the spectrum of $\mc{L}_\epsilon^\delta$ is given by 
\begin{align*}
 \frac{1}{\lambda^{\delta,{\rm l}}_k} = \frac{1}{2\pi} \log\delta +  \int_{-1}^1 \mc{K}(z) e^{\pi i k z} \, dz = \frac{1}{2\pi}\big(\log\delta+ K_0(\delta\pi\epsilon \abs{k}) \big), \; \abs{k}=1,2,3,\dots. 
 \end{align*}
 
For $\delta> 1$, we have that $\lambda^{\delta,{\rm l}}_k= \frac{2\pi}{\log\delta+ K_0(\delta \pi\epsilon \abs{k})}$ is strictly positive and approaches the positive constant $\frac{2\pi}{\log(\delta)}$ as $\abs{k}\to\infty$ (see Figure \ref{fig:3Bs}). Thus, due to the linear growth of the slender body PDE eigenvalues $\lambda_k^{\rm l}$ (equation \eqref{linear} in Proposition \ref{true_evals}), we immediately obtain the bound \eqref{Lreg_diff0}. \\

 \begin{figure}[!h]
\centering
\includegraphics[scale=0.5]{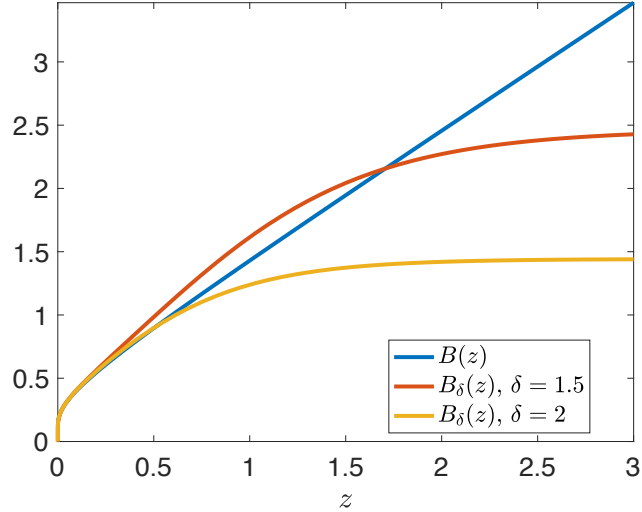}\\
\caption{Plot of the functions $B(z)$ (blue), $B_\delta(z)$ with $\delta=1.5$ (red), and $B_\delta(z)$ with $\delta=2$ (yellow) for $0\le z\le 3$ (note the different scale from Figure \ref{fig:Compare1}). For $\delta>1$, we can see that $B_\delta(z)$ closely aligns with $B(z)$ near $z=0$ and diverges linearly as $z\to\infty$. Recall that $\lambda^{\rm l}_k = 2\pi B(\pi\epsilon k)$ and $\lambda^{\delta,{\rm l}}_k = 2\pi B_\delta(\pi\epsilon k)$. }
\label{fig:3Bs}
\end{figure}
 
To obtain the refined low wavenumber bound \eqref{Lreg_diff1}, we consider the function
 \begin{align*}
 B_\delta(z) = \frac{1}{\log\delta + K_0(\delta z)}
 \end{align*}
 on the interval $0\le z \le \frac{2}{5}$. Note that $B_\delta$ satisfies $2\pi B_\delta(\pi\epsilon k)=\lambda^{\delta,{\rm l}}_k$ as well as the ODE
 \begin{equation}\label{BdeltODE}
 \frac{d B_\delta}{dz} = \delta K_1(\delta z) B_\delta^2(z), \quad B_\delta(0)=0.
 \end{equation}
 
Recalling the definition \eqref{Bdef} of $B(z)$ along with the ODE \eqref{BODE}, we have that the absolute value of the difference $\wt B(z) := B_\delta(z) - B(z)$ satisfies
 \begin{align*}
 \frac{d |\wt B|}{dz} = \bigg(\frac{1}{z}(B_\delta+B)\wt B(z) + \bigg(\delta K_1(\delta z)- \frac{1}{z} \bigg) B_\delta^2+ z\bigg){\rm sgn}(\wt B), \qquad \wt B(0)=0.
 \end{align*}

Now, on the interval $0\le z \le \frac{2}{5}$, by Proposition \ref{bessel_smallz}, equation \eqref{K0_bd}, we have that 
\begin{align*}
B_\delta(z)\le \frac{1}{\log\delta - \log(\delta z)} \le \frac{1}{\abs{\log z}}. 
\end{align*}
 Furthermore, by Proposition \ref{bessel_smallz}, equation \eqref{K1_bd}, we have 
\begin{equation}\label{delK1_bd}
 \abs{\delta K_1(\delta z)- \frac{1}{z}}\le \delta^2 z (1+\log\delta + \abs{\log z}). 
 \end{equation}
 Note that although Proposition \ref{bessel_smallz} is stated for $\delta z<1$, both \eqref{K0_bd} and \eqref{K1_bd} are trivially true in the case $\delta z\ge 1$ and thus there is no further restriction on the choice of $\delta$ due to these bounds. \\
 
We also define the constant 
\begin{equation}\label{cl2}
c_{\rm l, 2} := \frac{1}{\abs{\log(2/5)}} + B(2/5) \approx 1.8753
\end{equation}
and note that $(B_\delta+B)\le c_{\rm l, 2} <2$. \\

We then have
\begin{equation}\label{wtBODE}
 \begin{aligned}
 \frac{d |\wt B|}{dz} &\le \frac{c_{\rm l, 2}}{z} |\wt B(z)| + \delta^2 z (1+\log\delta + \abs{\log z})\frac{1}{\abs{\log z}^2} + z \\
&\le \frac{c_{\rm l, 2}}{z} |\wt B(z)|  + \frac{3}{2}\delta^2 z \bigg(\frac{5}{2}+ \log\delta \bigg) + z\\
&\le \frac{c_{\rm l, 2}}{z} |\wt B(z)|  + 5\delta^2\big(1+ \log\delta \big)  z , \qquad \wt B(0)=0.
 \end{aligned}
 \end{equation}
 Here we have also used that $\frac{1}{\abs{\log z}} \le \frac{1}{\abs{\log(2/5)}} \approx 1.0914 < \frac{3}{2}$. \\

We may again use a Gr\"onwall argument to show that
\begin{equation}\label{GronLapReg}
\begin{aligned}
\abs{\wt B(z)} &\le 5\delta^2\big(1+ \log\delta \big)\int_0^z e^{c_{\rm l,2}(\log z -\log s)}s \, ds \\
&=5\delta^2\big(1+ \log\delta \big) z^{c_{\rm l,2}}\int_0^z s^{1-c_{\rm l,2}} \, ds = \frac{5\delta^2(1+ \log\delta)}{2-c_{\rm l,2}} z^2 \le 41 \delta^2(1+ \log\delta)z^2.
\end{aligned}
\end{equation}
Here it is important that we have chosen our interval $0\le z\le \frac{2}{5}$ such that $c_{\rm l,2}<2$. Taking $z=\pi\epsilon \abs{k}$ yields \eqref{Lreg_diff1} and completes the proof of Lemma \ref{lam_lamREG}.
\end{proof}

\subsection{Proof of Theorem \ref{thm:errREG}: error in $\delta$-regularized Laplace expression}\label{sec:REGerr}
We may now use Lemma \ref{lam_lamREG} in the same way as Lemma \ref{lam_lamSB} was used in Section \ref{lap_err} to estimate the difference between $\mc{L}_\epsilon^{-1} [u]$ and $(\mc{L}_\epsilon^\delta)^{-1} [u]$. 

\begin{proof}[Proof of Theorem \ref{thm:errREG}]
We begin by denoting $N'=\floor{ \frac{2}{5\pi\epsilon}}$. For $u\in H^1(\T)$, we use both the high and low wavenumber bounds of Lemma \ref{lam_lamREG} to obtain 
\begin{equation}\label{REG_est0}
\begin{aligned}
&\norm{\mc{L}_\epsilon^{-1} [u] - (\mc{L}_\epsilon^\delta)^{-1} [u]}_{L^2(\T)}^2 \\
&\hspace{1cm} = \sum_{\abs{k}=1}^{N'} (\lambda_k^{\rm l} - \lambda^{\delta,{\rm l}}_k)^2 \abs{\wh u_k}^2 + \sum_{\abs{k}=N'+1}^\infty (\lambda_k^{\rm l} - \lambda^{\delta,{\rm l}}_k)^2 \abs{\wh u_k}^2 \\
&\hspace{1cm}  \le 82^2\pi^6\delta^4(1+\log\delta)^2\sum_{\abs{k}=1}^{N'} \epsilon^4 k^4 \abs{\wh u_k}^2 + 4\pi^2\sum_{\abs{k}=N'+1}^\infty \bigg(\frac{1}{2}+ \frac{1}{\log\delta}+ \pi\epsilon \abs{k}\bigg)^2 \abs{\wh u_k}^2 \\
&\hspace{1cm} \le 82^2\pi^6\delta^4(1+\log\delta)^2\epsilon^4 (N')^2 \sum_{\abs{k}=1}^{N'} k^2 \abs{\wh u_k}^2 + 4\pi^2\sum_{\abs{k}=N'+1}^\infty \bigg(\frac{(1+\log\delta)^2}{(\log\delta)^2}\frac{1}{k^2}+\epsilon^2 \bigg) k^2\abs{\wh u_k}^2 \\
&\hspace{1cm} \le \pi^2(1+\log\delta)^2\bigg(82^2\pi^4\delta^4\epsilon^4 (N')^2 + \frac{4}{(\log\delta)^2(N')^2} + 4\epsilon^2 \bigg)\bigg) \norm{u}_{H^1(\T)}^2.
\end{aligned}
\end{equation}

For $u\in H^2(\T)$, we may replace \eqref{REG_est0} with the bound
\begin{equation}\label{REG_est1}
\begin{aligned}
&\norm{\mc{L}_\epsilon^{-1} [u] - (\mc{L}_\epsilon^\delta)^{-1} [u]}_{L^2(\T)}^2 \\
&\hspace{1cm} \le 82^2\pi^6\delta^4(1+\log\delta)^2\epsilon^4 \sum_{\abs{k}=1}^{N'} k^4 \abs{\wh u_k}^2 + 4\pi^2\sum_{\abs{k}=N'+1}^\infty \bigg(\frac{(1+\log\delta)^2}{\log\delta^2}\frac{1}{k^4}+ \frac{\epsilon^2}{k^2} \bigg) k^4\abs{\wh u_k}^2 \\
&\hspace{1cm} \le \pi^2(1+\log\delta)^2\bigg(82^2\pi^4\delta^4\epsilon^4 + \frac{4}{\log\delta^2(N')^4} + \frac{4\epsilon^2}{(N')^4}\bigg) \norm{u}_{H^2(\T)}^2.
\end{aligned}
\end{equation}

Using that $N'=\floor{ \frac{2}{5\pi\epsilon}}$, estimates \eqref{REG_est0} and \eqref{REG_est1} then yield Theorem \ref{thm:errREG}. 
\end{proof}

\section{Slender body Stokes PDE}\label{sec:stokes}
We begin this section with the proofs of the key Lemmas \ref{tan_eig_diff} and  \ref{nor_eig_diff} bounding the difference between the eigenvalues of the Stokes slender body PDE operator $\mc{L}_\epsilon^{-1}$ and the slender body approximation $(\mc{L}^\SB_\epsilon)^{-1}$ in the tangential and normal directions, respectively. Section \ref{sec:tan_diff} is devoted to the tangential direction (Lemma \ref{tan_eig_diff} ) and Section \ref{sec:nor_diff} concerns the normal direction (Lemma \ref{nor_eig_diff}). The proofs rely on a comparison of the ODEs satisfied by (continuous versions of) the eigenvalues of both operators. Then in Section \ref{Stokes_err}, we use Lemmas \ref{tan_eig_diff} and \ref{nor_eig_diff} to prove Theorem \ref{thm:errS} bounding the difference $\mc{L}_\epsilon^{-1}[\bu] - (\mc{L}^\SB_\epsilon)^{-1}[\bu]$, and use Theorem \ref{thm:errS} to prove the $\epsilon$-dependence in the well-posedness estimate of Theorem \ref{thm:wellpoS}.

\subsection{Proof of Lemma \ref{tan_eig_diff}: difference in tangential eigenvalues}\label{sec:tan_diff}
In this section we show that the difference between the tangential direction eigenvalues $\lambda^{\rm t}_k$ of the Stokes slender body PDE operator \eqref{eigsT} and the corresponding eigenvalues $\lambda^{\SB,{\rm t}}_k$ of the slender body approximation \eqref{KR_eigs_St} are bounded by an expression proportional to $\epsilon^2k^2$ \eqref{tan_diff}.

\begin{proof}
We begin by considering the function 
\begin{equation}\label{Bt}
B_{\rm t}(z) = \frac{z K_1^2(z)}{2K_0(z)K_1(z) + z \big( K_0^2(z) - K_1^2(z) \big)}
\end{equation}
for $z\in \R_+$. Notice that $B_{\rm t}\ge0$ since, by Lemma \ref{bessel}, for $z\in \R_+$ we have
\begin{equation}\label{diffLB} 
0\le K_1^2-K_0^2\le \frac{K_0(K_1+K_0)}{2z} \le \frac{K_0K_1}{z}, 
\end{equation}
and therefore
\[ 2K_0K_1 + z(K_0^2-K_1^2) \ge K_0K_1\ge 0.\]
Furthermore, we have $\lambda^{\rm t}_k=4\pi B_{\rm t}(\pi\epsilon \abs{k})$, $\abs{k}=1,2,3,\dots$. \\

Differentiating \eqref{Bt}, we see that $B_{\rm t}(z)$ satisfies the ODE
\begin{equation}\label{BtODE}
\frac{d B_{\rm t}}{dz} = \frac{2}{z}(B_{\rm t})^2 - 2 \frac{K_0(z)}{K_1(z)} B_{\rm t}; \quad B_{\rm t}(0)=0.
\end{equation}

Using \eqref{BtODE}, we can show that $B_{\rm t}$ is monotone increasing for $z\in \R_+$. In particular, we have:
\begin{proposition}\label{Bt_bd}
For $z>0$, the function $B_{\rm t}(z)$ satisfies
\begin{equation}\label{eqBt_bd}
\frac{B_{\rm t}(z)}{z} > \frac{K_0(z)}{K_1(z)}.
\end{equation}
\end{proposition}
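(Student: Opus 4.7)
The plan is to reduce the inequality $B_{\rm t}(z)/z > K_0(z)/K_1(z)$ to a polynomial inequality in the ratio $r := K_1(z)/K_0(z)$ and then exploit the bounds from Lemma~\ref{lem:bessel}. Since the denominator $2K_0 K_1 + z(K_0^2 - K_1^2)$ of $B_{\rm t}/z$ is strictly positive (this is the content of \eqref{diffLB}), the target inequality is equivalent to
\[ K_1(K_1^2 - 2K_0^2) + z K_0 (K_1^2 - K_0^2) > 0. \]
Dividing through by $K_0^3 > 0$ and setting $r = K_1/K_0$, this becomes the cubic inequality
\[ q(r,z) := r^3 + z r^2 - 2r - z > 0. \]

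The next step is to use two standard facts. From Lemma~\ref{lem:bessel}, $r(z) \ge r_*(z) := (\sqrt{z^2 + z + 1} + 1)/(z+1)$, and one checks immediately that $r_*(z) > 1$ for all $z>0$ (since $\sqrt{z^2+z+1} > z$). A one-line derivative computation gives $\partial_r q(r,z) = 3r^2 + 2zr - 2 \ge 1 + 2z > 0$ on $r \ge 1$, so $q(\cdot,z)$ is strictly increasing on $[1,\infty)$. It therefore suffices to verify $q(r_*, z) > 0$. The key algebraic observation is that, squaring the defining relation $r_*(z+1) - 1 = \sqrt{z^2+z+1}$, one obtains the identity $(z+1) r_*^2 - 2 r_* - z = 0$, i.e.\ $r_*^2(z+1) = z + 2 r_*$. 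Substituting this into the cubic collapses it:
\[ q(r_*, z) = r_*^2 (r_* + z) - (2r_* + z) = r_*^2(r_*-1) + \big((z+1) r_*^2 - 2r_* - z\big) = r_*^2(r_* - 1), \]
which is strictly positive since $r_* > 1$. Combined with monotonicity of $q$ in $r$, this gives $q(r, z) \ge q(r_*, z) > 0$, proving the proposition.

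The main conceptual point (and the reason this proof is clean rather than a slog) is that the lower bound $r_*$ from Lemma~\ref{lem:bessel} is not merely a convenient estimate but is precisely the positive root of the quadratic $(z+1)r^2 - 2r - z = 0$ that appears naturally when one tries to simplify $q(r_*, z)$. This is the same quadratic that was engineered into the Bessel function bound in the first place, so the two pieces fit together exactly. The potential obstacle I would worry about is trying to work instead with the looser crossing/ODE comparison argument suggested by the ODE \eqref{BtODE}: a first-crossing analysis at a hypothetical contact point $z_1$ leads to the necessary condition $r(z_1) \le (\sqrt{z_1^2+1}+1)/z_1$, and a numerical check at $z_1 = 1$ shows this is \emph{not} contradicted by the Lemma~\ref{lem:bessel} lower bound. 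Hence the direct algebraic route via $q$ and the factorization $q(r_*,z) = r_*^2(r_*-1)$ seems essential.
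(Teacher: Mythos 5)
Your proof is correct and follows essentially the same route as the paper: reduce the claim to the cubic inequality $\left(\tfrac{K_1}{K_0}\right)^3 + z\left(\tfrac{K_1}{K_0}\right)^2 - 2\tfrac{K_1}{K_0} - z > 0$ and invoke the lower bound of Lemma~\ref{lem:bessel}. The only differences are cosmetic and slightly in your favor: you make explicit the monotonicity of the cubic in $r$ (which the paper's direct substitution of the lower bound leaves implicit), and you verify positivity at $r_*$ via the identity $(z+1)r_*^2 - 2r_* - z = 0$, i.e.\ $q(r_*,z)=r_*^2(r_*-1)$, whereas the paper expands the same quantity and estimates it using $\tfrac12 + z \le \sqrt{1+z+z^2} \le 1+z$.
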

\begin{proof}
Proving \eqref{eqBt_bd} amounts to showing that
\begin{align*}
\bigg(\frac{K_1}{K_0}\bigg)^3 + z \bigg(\frac{K_1}{K_0}\bigg)^2 - 2\frac{K_1}{K_0} - z \ge 0.
\end{align*}
By Lemma \ref{lem:bessel}, we have
\begin{align*}
\bigg(\frac{K_1}{K_0}\bigg)^3 + z \bigg(\frac{K_1}{K_0}\bigg)^2 - 2\frac{K_1}{K_0} - z &\ge \frac{(z^2-z+2)\sqrt{1+z+z^2}-z^3+z^2+2}{(1+z)^3} \\
&\ge \frac{z^2+2z+6}{2(1+z)^3} >0.
\end{align*}
Here we have used that $\frac{1}{2}+z \le \sqrt{1+z+z^2} \le 1+z$. 
\end{proof}

In addition to $B_{\rm t}(z)$, we consider the function 
 \begin{equation}\label{BSBt}
B_{\rm t}^\SB(z) = -\frac{1}{1+2\log(z/2)+2\gamma},
\end{equation}
and note that the tangential eigenvalues $\lambda^{\SB,{\rm t}}_k$ \eqref{KR_eigs_St} of the slender body approximation are given by $4\pi B_{\rm t}^\SB(\pi\epsilon \abs{k})$, $\abs{k}=1,2,3,\dots$. The function $B_{\rm t}^\SB(z)$ may be compared to $B_{\rm t}(z)$ in Figure \ref{fig:Compare_t}.  \\

\begin{figure}[!h]
\centering
\includegraphics[scale=0.5]{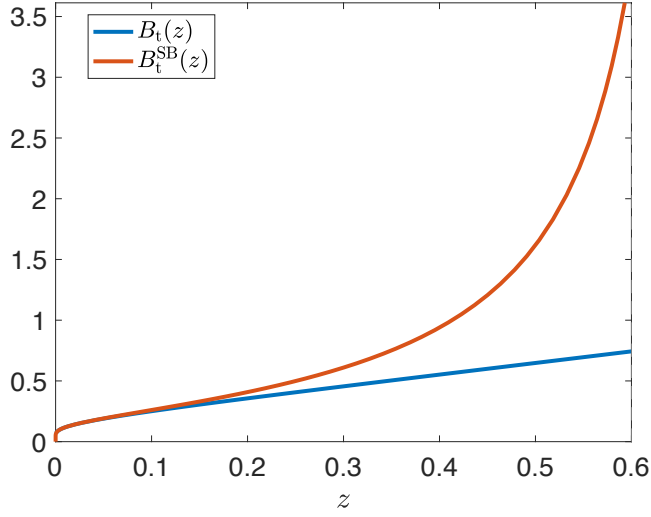}\\
\caption{Plot of the functions $B_{\rm t}(z)$ (blue) and $B_{\rm t}^\SB(z)$ (red). As in the Laplace setting, $B_{\rm t}$ and $B_{\rm t}^\SB$ are nearly identical for small $z$, but diverge quickly as $z$ increases. The functions $B_{\rm t}$ and $B_{\rm t}^\SB$ correspond to the tangential eigenvalues of $\mc{L}_\epsilon^{-1}$ and $(\mc{L}_\epsilon^\SB)^{-1}$, respectively, via $\lambda^{\rm t}_k=4\pi B_{\rm t}(\pi\epsilon k)$ and $\lambda^{\SB,{\rm t}}_k=4\pi B_{\rm t}^\SB(\pi\epsilon k)$. }
\label{fig:Compare_t}
\end{figure}

Similarly to the Laplace setting (Section \ref{lap_diff}), the function $B_{\rm t}^\SB(z)$ satisfies the ODE
\begin{equation}\label{BSBt_ODE}
\frac{d B_{\rm t}^\SB}{dz} = \frac{2}{z}(B_{\rm t}^\SB)^2; \quad B_{\rm t}^\SB(0)=0.
\end{equation}

Due to the form of the ODEs \eqref{BtODE} and \eqref{BSBt_ODE}, the difference $\overline B_{\rm t} = B_{\rm t}^\SB- B_{\rm t}$ is positive for all $0< z< 2e^{-\gamma-1/2}$ and satisfies
\begin{align*}
\frac{d \overline B_{\rm t}}{dz} = \frac{2}{z}(B^\SB_{\rm t}+B_{\rm t})\overline B_{\rm t} + 2\frac{K_0}{K_1}B_{\rm t}; \quad \overline B_{\rm t}(0) =0.
\end{align*}

Now, by Lemma \ref{lem:bessel}, for $z\in \R_+$ we have that 
\begin{equation}\label{KBt_bd}
2\frac{K_0}{K_1}B_{\rm t} = \frac{z}{1+\frac{z}{2}\big(\frac{K_0}{K_1} - \frac{K_1}{K_0} \big)} \le \frac{z}{1+\frac{z}{2}\big(\frac{2z}{2z+1} - \frac{2z+1}{2z} \big)} = \frac{2z(4z+2)}{4z+3} \le 2z.
\end{equation}

Noting that $B^\SB_{\rm t}$ and $B_{\rm t}$ are both monotone increasing for $0\le z< 2e^{-\gamma-1/2}\approx 0.681$, let 
\begin{equation}\label{cBt}
c_{\rm t} = \max_{0\le z\le 1/4} \big(B_{\rm t}^\SB(z) + B_{\rm t}(z) \big)= B_{\rm t}^\SB(1/4) + B_{\rm t}(1/4) \approx 0.905.
\end{equation}
Here the range $0\le z\le \frac{1}{4}$ is chosen such that $c_{\rm t} <1$. Then for $0\le z\le \frac{1}{4}$, we have that $\overline B_{\rm t}$ satisfies
\begin{align*}
\frac{d \overline B_{\rm t}}{dz} \le \frac{2c_{\rm t} }{z}\overline B_{\rm t} + 2z; \quad \overline B_{\rm t}(0) =0.
\end{align*}

Using a Gr\"onwall inequality, we thus obtain
\begin{equation}\label{barBt_ineq}
\overline B_{\rm t} \le 2\int_0^z e^{2c_{\rm t} (\log z - \log s)} s \, ds = 2z^{2c_{\rm t}} \int_0^z s^{1-2c_{\rm t} } \, ds = \frac{z^2}{1-c_{\rm t} }.
\end{equation}

Plugging $z=\pi\epsilon \abs{k}$ into \eqref{barBt_ineq}, we obtain
\[ \abs{\lambda^{\rm t}_k - \lambda^{\SB,{\rm t}}_k} \le \frac{4\pi^3}{1-c_{\rm t} } \epsilon^2k^2.\] 
\end{proof}

\subsection{Proof of Lemma \ref{nor_eig_diff}: difference in normal eigenvalues}\label{sec:nor_diff}
In this section we prove that the difference $\lambda^{\rm n}_k - \lambda^{\SB,{\rm n}}_k$ between the normal direction eigenvalues of the Stokes slender body PDE \eqref{eigsN} and the slender body approximation \eqref{KR_eigs_Sn} is bounded by an expression proportional to $\epsilon^2k^2$ \eqref{nor_diff}.

\begin{proof}
We consider the function
\begin{equation}\label{Bn}
B_{\rm n}(z) =\frac{4zK_1^2K_2+z^2 K_1(K_1^2-K_0K_2)}{2K_0K_1K_2 + z \big(K_1^2(K_0+K_2)-2K_0^2K_2 \big)},
\end{equation}
where $K_j=K_j(z)$, $j=0,1,2$. Note that $\lambda^{\rm n}_k = 2\pi B_{\rm n}(\pi\epsilon \abs{k})$. We have that
\begin{equation}\label{Bn_32z}
B_{\rm n}(z)>\frac{3}{2}z, \qquad z>0,
\end{equation}
which is equivalent to the lower bound of \ref{Ngrowth} and shown in Appendix \ref{app_nor_spec}. \\

Differentiating \eqref{Bn}, we find that $B_{\rm n}(z)$ satisfies the ODE
\begin{equation}\label{BnODE}
\frac{d B_{\rm n}}{dz} = \frac{1}{2z}(B_{\rm n})^2 - h(z); \quad B_{\rm n}(0)=0,
\end{equation}
where
\begin{equation}\label{h_z}
\begin{aligned} 
h(z) &= \frac{z}{8}\bigg(\frac{4 z^4 K_0^6 - 16 z^3 K_0^5 K_1 - 
       z^2 (120 + 11 z^2) K_0^4 K_1^2 + 
       4 z (-40 + 3 z^2) K_0^3 K_1^3}{(z^2 K_0^3 + 
       z K_0^2 K_1 - (2 + z^2) K_0 K_1^2 - z K_1^3)^2} \\
 &\qquad  +\frac{ 2 (-16 + 66 z^2 + 5 z^4) K_0^2 K_1^4 + 
       4 z (32 + z^2) K_0 K_1^5 - 
       3 z^2 (8 + z^2) K_1^6}{ (z^2 K_0^3 + 
       z K_0^2 K_1 - (2 + z^2) K_0 K_1^2 - z K_1^3)^2} \bigg).
\end{aligned}
\end{equation}

We can show that $h(z)$ satisfies the following bound. 
\begin{proposition}\label{h_bd}
For $z>0$, the function $h(z)$ defined in \eqref{h_z} satisfies 
\begin{equation}\label{hbound}
\abs{h(z)} < \frac{9}{8}z.
\end{equation}
\end{proposition}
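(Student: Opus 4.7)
The plan is to reduce the multi-term Bessel expression in \eqref{h_z} to a single-variable rational inequality by introducing the ratio $r(z) = K_1(z)/K_0(z)$. Factoring $K_0^6$ out of both the numerator and denominator of the fraction in \eqref{h_z} gives
$$h(z) = \frac{z}{8}\cdot\frac{P(z,r(z))}{D(z,r(z))^2}, \qquad D(z,r) = z^2 + zr - (2+z^2)r^2 - zr^3,$$
where $P(z,r)$ is the sixth-degree polynomial in $r$ obtained from the numerator. The target inequality $|h(z)| < (9/8)z$ becomes the bivariate polynomial inequality $|P(z,r(z))| < 9\,D(z,r(z))^2$, and Lemma \ref{lem:bessel} provides a tight sandwich $r_-(z) < r(z) < r_+(z)$ with $r_\pm$ explicit algebraic envelopes that I would exploit throughout.

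I would then split into three regimes. For large $z$, the expansion $r = 1 + 1/(2z) + O(z^{-2})$ together with the explicit computation $D(z,1) = -2$ and the fact that $P(z,1)$ contains no $z^4$ or $z^3$ terms (direct substitution of $r=1$ collapses both leading coefficients) reduces the leading asymptotic of $h$ to $z + O(1)$, safely below $9z/8$. For very small $z$, the expansions of Proposition \ref{bessel_smallz} ($K_0 \sim |\log z|$, $K_1 \sim 1/z$) identify the dominant terms in both $P$ and $D$ and yield $h(z) = O(z)$ with an explicitly computable constant less than $9/8$. An appealing alternative for the small-$z$ and intermediate regimes is to exploit the ODE origin of $h$: differentiating \eqref{Bn} and rearranging \eqref{BnODE} gives $h(z) = B_{\rm n}(z)^2/(2z) - B_{\rm n}'(z)$, so combining the already-established bound $3z/2 < B_{\rm n}(z) < 3z/2 + 3/2$ from \eqref{Ngrowth} with a matching upper bound on $B_{\rm n}'(z)$ derived from Lemma \ref{lem:bessel} should directly telescope to the claim.

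The intermediate compact regime $z \in [z_1,z_2]$ is where no asymptotic applies and an explicit calculation is required. Here I would substitute the sharp envelopes $r_\pm(z)$ from Lemma \ref{lem:bessel} into $P$ and $D^2$, reducing the claim to a polynomial inequality purely in $z$ on a compact interval; this can be closed either by direct algebraic manipulation (exploiting that every $z$-coefficient of $P$ pairs naturally with a $D^2$ term) or by a short interval-arithmetic check. The main obstacle is the \emph{tightness} of the bound: $|h|/z$ approaches a value close to $1$ in both asymptotic limits (with $h \to -z$ at $z \to 0^+$ and $h \to +z$ at $z \to \infty$, the signs reflecting that the dominant $r^6$ term in $P$ is negative while the leading correction at $r=1$ is positive), so there is essentially no slack. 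Because $P(z,r)$ has seven terms with alternating signs, no crude triangle-inequality estimate suffices; the proof must leverage the exact cancellations in $P$ near $r=1$ and the tight Lemma \ref{lem:bessel} envelope to control the competition between the positive $r^{4,5}$ terms and the negative $r^{3,6}$ terms.
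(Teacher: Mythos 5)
Your overall reduction is the same as the paper's: rewrite $h$ as $\frac{z}{8}$ times a rational function of $z$ and the Bessel ratio (the paper uses $A=K_0/K_1$ rather than your $r=K_1/K_0$, giving $h=\frac{z}{8}N_3/D_3$ with $D_3$ the square of the same cubic you call $D$), reduce $|h|<\frac98 z$ to the two polynomial inequalities $9D_3\mp N_3>0$, and control the ratio by the envelope of Lemma \ref{lem:bessel}. However, your plan for the large-$z$ regime contains a genuine error that would make that step fail. Because $r(z)-1\sim\frac{1}{2z}$ multiplies coefficients of size $z^2$ in $D$, one has $D(z,r(z))\approx-(z+3)$, not $D(z,1)=-2$; carrying the expansion through (equivalently, using $B_{\rm n}(z)=\frac32 z+\frac34+O(1/z)$ and $h=\frac{B_{\rm n}^2}{2z}-B_{\rm n}'$) gives $h(z)=\frac98 z-\frac38+o(1)$, so $h/z\to\frac98$ exactly, the constant in the bound. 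There is therefore no multiplicative slack at infinity at all (only an additive margin of $\frac38$), your claimed asymptotic ``$h\sim z+O(1)$, safely below $9z/8$'' is false, and a leading-order asymptotic argument cannot close the large-$z$ regime: you would need the next-order term with explicit, uniform error control, or a uniform algebraic argument. This is precisely why the paper does not use asymptotics but instead regroups $9D_3-N_3$ into explicit squares plus positive terms minus a small residual (equation \eqref{9D3N3}), applies the sharp envelope only to the residual, and verifies resulting polynomial positivity for all $z\ge\frac32$ (a degree-13 polynomial check), with analogous case splits for $9D_3+N_3$.

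Two further points. Your proposed shortcut via the ODE is circular: $h$ is \emph{defined} by $h=\frac{B_{\rm n}^2}{2z}-B_{\rm n}'$ in \eqref{BnODE}, so ``a matching bound on $B_{\rm n}'$'' with the required precision is literally equivalent to the claim being proved, and the crude sandwich $\frac32 z<B_{\rm n}<\frac32 z+\frac32$ from \eqref{Ngrowth} is nowhere near sharp enough to telescope (it only gives $\frac{B_{\rm n}^2}{2z}>\frac98 z$, so the upper bound on $h$ would require a quantitative \emph{lower} bound on $B_{\rm n}'$ of the same difficulty as the original inequality). Finally, for the intermediate regime you correctly identify that the massive cancellations near $r=1$ forbid naive triangle-inequality estimates, but you do not supply the decomposition that makes the envelope substitution survive them; the paper's proof shows this requires a specific sum-of-squares-style rearrangement plus several sub-splits of the $z$-axis (e.g.\ $z\le1$, $1\le z\le\frac54$, $\frac54\le z\le\frac32$, and the use of $1+\sqrt{1+z+z^2}\ge\frac53+z$), and that this careful bookkeeping is needed on the whole half-line, not just on a compact interval.
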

Due to the expression for $h(z)$, the proof of Proposition \ref{h_bd} is more complicated than the analogous bounds in the Laplace and tangential Stokes cases, and thus appears in Appendix \ref{app_hbd}. Note that by the ODE \eqref{BnODE}, Proposition \ref{h_bd} along with the lower bound \eqref{Bn_32z} implies that $B_{\rm n}$ is monotone increasing on $\R_+$. \\

In addition to $B_{\rm n}(z)$, we consider the function
\begin{equation}\label{BSBn}
B_{\rm n}^\SB(z) = \frac{4}{1-2\log(z/2)-2\gamma},
\end{equation}
which corresponds to the normal direction eigenvalues for the slender body approximation \eqref{KR_eigs_Sn} via $\lambda^{\SB,{\rm n}}_k = 2\pi B_{\rm n}^\SB(\pi\epsilon \abs{k})$. The two functions $B_{\rm n}(z)$ and $B_{\rm n}^\SB(z)$ may be compared in Figure \ref{fig:Compare_n}. \\

\begin{figure}[!h]
\centering
\includegraphics[scale=0.5]{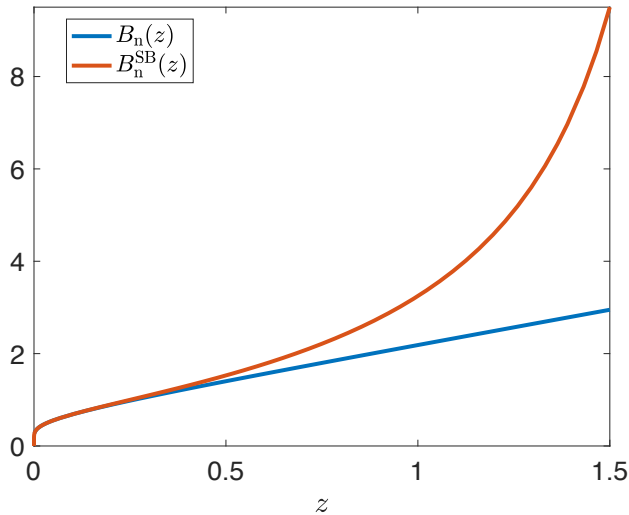}\\
\caption{Plot of the functions $B_{\rm n}(z)$ (blue) and $B_{\rm n}^\SB(z)$ (red). As in the Laplace and tangential Stokes setting, $B_{\rm t}$ and $B_{\rm t}^\SB$ are nearly identical for small $z$ and diverge for larger $z$. Note that the normal direction eigenvalues of $\mc{L}_\epsilon^{-1}$ and $(\mc{L}_\epsilon^\SB)^{-1}$, respectively, are given by $\lambda^{\rm n}_k=2\pi B_{\rm n}(\pi\epsilon k)$ and $\lambda^{\SB,{\rm n}}_k=2\pi B_{\rm n}^\SB(\pi\epsilon k)$. }
\label{fig:Compare_n}
\end{figure}

The function $B_{\rm n}^\SB(z)$ satisfies the ODE
\begin{equation}\label{BSBn_ODE}
\frac{d B_{\rm n}^\SB}{dz} = \frac{1}{2z}(B_{\rm n}^\SB)^2; \quad B_{\rm n}^\SB(0)=0.
\end{equation}

Now, unlike the Laplace and tangential Stokes cases, we don't always have $B_{\rm n}^\SB>B_{\rm n}$, since $h(z)$ is not strictly positive. Thus we consider the absolute value of the difference $\overline B_{\rm n}(z) := B_{\rm n}^\SB(z) - B_{\rm n}(z)$, which satisfies
\begin{equation}\label{barBn_ODE}
\frac{d \abs{\overline B_{\rm n}} }{dz} = \bigg(\frac{1}{2z}(B_{\rm n}^\SB+B_{\rm n})\overline B_{\rm n} + h(z)\bigg)\rm{sgn}(\overline B_{\rm n}); \quad \overline B_{\rm n}(0)=0.
\end{equation}

Using that both $B_{\rm n}(z)$ and $B_{\rm n}^\SB(z)$ are monotone increasing for $0\le z< 2 e^{\frac{1-2\gamma}{2}}\approx 1.851$, we define
\begin{equation}\label{cBn}
c_{\rm n} = \max_{0\le z\le \frac{73}{100}} \big( B_{\rm n}^\SB(z) +B_{\rm n}(z) \big) = B_{\rm n}^\SB(73/100)+B_{\rm n}(73/100) \approx 3.916.
\end{equation}
Here the range $0\le z\le \frac{73}{100}$ is chosen such that $c_{\rm n} < 4$. We then have that for $0\le z\le \frac{73}{100}$, $\abs{\overline B_{\rm n}(z)}$ satisfies 
\begin{align*}
\frac{d \abs{\overline B_{\rm n}}}{dz} \le \frac{c_{\rm n}}{2z}\abs{\overline B_{\rm n}} + \frac{9}{8}z; \quad \abs{\overline B_{\rm n}(0)}=0.
\end{align*}

Using a Gr\"onwall argument as in the tangential case yields
\begin{equation}\label{barBn_ineq}
\abs{\overline B_{\rm n}} \le \frac{9}{8} \int_0^z e^{\frac{c_{\rm n}}{2} (\log z - \log s)} s \, ds = \frac{9}{8}z^{\frac{c_{\rm n}}{2}}  \int_0^z s^{1- \frac{c_{\rm n}}{2}} \, ds = \frac{9z^2}{4(4-c_{\rm n})}.
\end{equation}

Finally, taking $z=\pi\epsilon \abs{k}$ in \eqref{barBn_ineq}, we obtain
\[ \abs{\lambda^{\rm n}_k - \lambda_k^{\SB,{\rm n}} } \le \frac{9\pi^3}{2(4-c_{\rm n})} \epsilon^2k^2.\] 
\end{proof}

\subsection{Proof of Theorems \ref{thm:errS} and \ref{thm:wellpoS}: Stokes error estimate and well-posedness}\label{Stokes_err} 
As in the Laplace setting, the error estimate in Theorem \ref{thm:errS} follows from a direct application of Lemmas \ref{tan_eig_diff} and \ref{nor_eig_diff}. The bound for $(\mc{L}_\epsilon)^{-1}$ of Theorem \ref{thm:wellpoS} then follows using the proof of Theorem \ref{thm:errS}. 

\begin{proof}[Proof of Theorem \ref{thm:errS}]
Recall the definition \eqref{LambdaNM} of the truncated slender body approximation $(\mc{L}_\epsilon^\SB)^{-1}_{NM}$. Given a slender body velocity $\bu\in H^1(\T)$, we have that the difference between the slender body PDE operator $\mc{L}_\epsilon^{-1}[\bu]$ and the truncated slender body approximation $(\mc{L}_\epsilon^\SB)^{-1}_{NM} [\bu]$ satisfies
\begin{equation}\label{errScomp0}
\begin{aligned}
&\norm{\mc{L}_\epsilon^{-1}[\bu] -(\mc{L}_\epsilon^\SB)^{-1}_{NM}[ \bu]}_{L^2(\T)}^2 \\
&\hspace{1cm} = \sum_{\abs{k}=1}^N \big(\lambda^{\rm t}_k-\lambda^{\SB,{\rm t}}_k\big)^2 \abs{\wh u_{z,k}}^2 + \sum_{\abs{k}=1}^M \big(\lambda^{\rm n}_k-\lambda^{\SB,{\rm n}}_k\big)^2 \big(\abs{\wh u_{x,k}}^2 + \abs{\wh u_{y,k}}^2 \big) \\
&\hspace{2cm} + \sum_{\abs{k}=N+1}^\infty (\lambda^{\rm t}_k)^2 \abs{\wh u_{z,k}}^2 + \sum_{\abs{k}=M+1}^\infty (\lambda^{\rm n}_k)^2 \big(\abs{\wh u_{x,k}}^2 + \abs{\wh u_{y,k}}^2 \big) \\
&\hspace{1cm} \le C\bigg(\sum_{\abs{k}=1}^N \epsilon^4k^4 \abs{\wh u_{z,k}}^2 + \sum_{\abs{k}=1}^M \epsilon^4k^4 \big(\abs{\wh u_{x,k}}^2 + \abs{\wh u_{y,k}}^2 \big) \bigg) \\
&\hspace{2cm} + C\bigg(\sum_{\abs{k}=N+1}^\infty (1+\epsilon \abs{k})^2 \abs{\wh u_{z,k}}^2 + \sum_{\abs{k}=M+1}^\infty (1+\epsilon \abs{k})^2 \big(\abs{\wh u_{x,k}}^2 + \abs{\wh u_{y,k}}^2 \big) \bigg) \\
&\hspace{1cm} \le C\epsilon^4\bigg(N^2\sum_{\abs{k}=1}^N k^2 \abs{\wh u_{z,k}}^2 + M^2 \sum_{\abs{k}=1}^M k^2 \big(\abs{\wh u_{x,k}}^2 + \abs{\wh u_{y,k}}^2 \big) \bigg) \\
&\hspace{2cm}+ C\bigg( \sum_{\abs{k}=N+1}^\infty \bigg(\frac{1}{k^2}+\epsilon^2 \bigg)k^2 \abs{\wh u_{z,k}}^2 + \sum_{\abs{k}=M+1}^\infty \bigg(\frac{1}{k^2}+\epsilon^2\bigg)k^2 \big(\abs{\wh u_{x,k}}^2 + \abs{\wh u_{y,k}}^2 \big) \bigg) \\
&\hspace{1cm}\le C\bigg( \epsilon^4(N^2 + M^2) + \frac{1}{N^2} + \frac{1}{M^2} + \epsilon^2\bigg) \norm{\bu}_{H^1(\T)}^2.
\end{aligned}
\end{equation}
In the first inequality we have used Lemmas \ref{tan_eig_diff} and \ref{nor_eig_diff} as well as the upper bounds on $\lambda^{\rm t}_k$ \eqref{Tgrowth} and $\lambda^{\rm n}_k$ \eqref{Ngrowth}. In the case that we actually have $\bu\in H^2(\T)$, the difference $\mc{L}_\epsilon^{-1}[\bu] -(\mc{L}_\epsilon^\SB)^{-1}_{NM}[ \bu]$ then satisfies 
\begin{equation}\label{errScompH2}
\begin{aligned}
&\norm{\mc{L}_\epsilon^{-1}[\bu] -(\mc{L}_\epsilon^\SB)^{-1}_{NM}[ \bu]}_{L^2(\T)}^2 \\
&\hspace{1cm} \le C\epsilon^4 \bigg(\sum_{\abs{k}=1}^N k^4 \abs{\wh u_{z,k}}^2 + \sum_{\abs{k}=1}^M k^4 \big(\abs{\wh u_{x,k}}^2 + \abs{\wh u_{y,k}}^2 \big) \bigg) \\
&\hspace{2cm}+ C\bigg( \sum_{\abs{k}=N+1}^\infty \bigg(\frac{1}{k^4}+\frac{\epsilon^2}{k^2} \bigg)k^4 \abs{\wh u_{z,k}}^2 + \sum_{\abs{k}=M+1}^\infty \bigg(\frac{1}{k^4}+\frac{\epsilon^2}{k^2} \bigg)k^4 \big(\abs{\wh u_{x,k}}^2 + \abs{\wh u_{y,k}}^2 \big) \bigg) \\
&\hspace{1cm}\le C\bigg( \epsilon^4 + \frac{1}{N^4} + \frac{1}{M^4} + \frac{\epsilon^2}{N^2} + \frac{\epsilon^2}{M^2}\bigg) \norm{\bu}_{H^1(\T)}^2.
\end{aligned}
\end{equation}

 As in the Laplace setting, we see that choosing $N=C_1/\epsilon$ and $M=C_2/\epsilon$ in both \eqref{errScomp0} and \eqref{errScompH2} for constants $C_1$, $C_2$ will yield the best rate of convergence as $\epsilon\to 0$. By Lemmas \ref{tan_eig_diff} and \ref{nor_eig_diff}, we must have $C_1\le 1/(4\pi)$ and $C_2\le 73/(100\pi)$, and thus we obtain Theorem \ref{thm:errS}. 
\end{proof}

We next use the proof of Theorem \ref{thm:errS} to derive the well-posedness estimate for $\mc{L}_\epsilon^{-1}$ stated in Theorem \ref{thm:wellpoS}.
\begin{proof}[Proof of Theorem \ref{thm:wellpoS}]
We begin by recalling the form of the spectrum of the slender body approximation $(\mc{L}_\epsilon^\SB)^{-1}$ in the tangential \eqref{KR_eigs_St} and normal \eqref{KR_eigs_Sn} directions; in particular, $\lambda^{\SB,{\rm t}}\le C\abs{\log(\epsilon \abs{k})}^{-1}$ and $\lambda^{\SB,{\rm n}}\le C\abs{\log(\epsilon \abs{k})}^{-1}$ for $\abs{k}$ sufficiently small (i.e. $\abs{k}<\frac{1}{4\pi\epsilon}$ and $\abs{k}<\frac{73}{100\pi\epsilon}$, respectively). \\

We choose $N=M=\floor{1/\sqrt{\epsilon}}$ in the definition \eqref{LambdaNM} of the truncated slender body approximation $(\mc{L}_\epsilon^\SB)^{-1}_{NM}$, where, again, the notation $\floor{q}$ denotes the nearest integer less than or equal to $q$. Then for any $\bu\in L^2(\T)$, we have 
\begin{equation}\label{errScomp1}
\begin{aligned}
\norm{(\mc{L}_\epsilon^\SB)^{-1}_{NM} [\bu]}_{L^2(\T)}^2 &= \sum_{\abs{k}=1}^N (\lambda^{\SB,{\rm t}}_k)^2 \abs{\wh u_{z,k} }^2 + \sum_{\abs{k}=1}^M  (\lambda^{\SB,{\rm n}}_k)^2 \big(\abs{\wh u_{x,k} }^2 + \abs{\wh u_{y,k} }^2 \big) \\
&\le C \sum_{\abs{k}=1}^{N} \frac{1}{\log(\epsilon k)^2} \big(\abs{\wh u_{x,k} }^2 + \abs{\wh u_{y,k} }^2 + \abs{\wh u_{z,k} }^2\big) \\
&\le \frac{C}{\log(\epsilon^{1/2})^2} \sum_{\abs{k}=1}^{N} \abs{\wh{\bu}_k}^2 \le \frac{C}{\abs{\log\epsilon}^2} \norm{\bu}_{L^2(\T)}^2.
\end{aligned}
\end{equation}

Furthermore, taking $N=M=\floor{1/\sqrt{\epsilon}}$ in estimate \eqref{errScomp0} in the proof of Theorem \ref{thm:errS}, we have
\begin{equation}\label{errScomp2}
\begin{aligned}
\norm{\mc{L}_\epsilon^{-1}[\bu] -(\mc{L}_\epsilon^\SB)^{-1}_{NM} [\bu]}_{L^2(\T)}^2 &\le C\bigg( \epsilon^4(N^2 + M^2) + \frac{1}{N^2} + \frac{1}{M^2} + \epsilon^2\bigg) \norm{\bu}_{H^1(\T)}^2 \le C\epsilon \norm{\bu}_{H^1(\T)}^2.
\end{aligned}
\end{equation}
Finally, combining \eqref{errScomp1} and \eqref{errScomp2}, we obtain
\begin{align*}
\norm{\mc{L}_\epsilon^{-1}[\bu]}_{L^2(\T)} \le \norm{\mc{L}_\epsilon^{-1}[\bu] -(\mc{L}_\epsilon^\SB)^{-1}_{NM} [\bu]}_{L^2(\T)} +\norm{(\mc{L}_\epsilon^\SB)^{-1}_{NM} [\bu]}_{L^2(\T)} \le \frac{C}{\abs{\log\epsilon}} \norm{\bu}_{H^1(\T)}.
\end{align*}
\end{proof}

\subsection{Proof of Lemma \ref{diff_REGS}: difference in $\delta$-regularized Stokes eigenvalues}\label{sec:REGS}
In this section we consider the spectrum of the $\delta$-regularized Stokes slender body approximation $\mc{L}_\epsilon^\delta$, defined in \eqref{KR_reg} for $\Sigma_\epsilon=\mc{C}_\epsilon$. We prove Lemma \ref{diff_REGS} bounding the difference between the eigenvalues of $(\mc{L}_\epsilon^\delta)^{-1}$ and the Stokes slender body PDE operator $\mc{L}_\epsilon^{-1}$ in both the tangential and normal directions.

\begin{proof}[Proof of Lemma \ref{diff_REGS}]
We first note that the integral operator $\bm{K}_\delta$ in \eqref{KR_reg} may be written as a convolution with the same kernel \eqref{kernel} as in the Laplace setting, yielding the forms of the tangential \eqref{lamREG_tS} and normal eigenvalues \eqref{lamREG_nS} of $(\mc{L}_\epsilon^\delta)^{-1}$ via a similar calculation. \\

Furthermore, taking the regularization parameter $\delta>\sqrt{e}$ ensures that both $\lambda^{\delta,{\rm t}}_k$ and $\lambda^{\delta,{\rm n}}_k$ are strictly positive for all $k$, and we have that 
\[ \lim_{\abs{k}\to\infty} \lambda^{\delta,{\rm t}}_k = \frac{4\pi}{-1+2\log\delta}, \quad \lim_{\abs{k}\to\infty} \lambda^{\delta,{\rm n}}_k = \frac{8\pi}{1+2\log\delta}.\] 
See Figure \ref{fig:BdeltaS} for a depiction with $\delta=2$ and $\delta=3$. Due to the boundedness of $\lambda^{\delta,{\rm t}}_k$ and $\lambda^{\delta,{\rm n}}_k$ and the linear growth of both $\lambda^{\rm t}_k$ and $\lambda^{\rm n}_k$ (by Proposition \ref{Stokes_spec}, estimates \eqref{Tgrowth} and \eqref{Ngrowth}, respectively), we immediately obtain the bounds \eqref{REG_diff0} and \eqref{REG_diff00}. \\

 \begin{figure}[!h]
\centering
\includegraphics[scale=0.45]{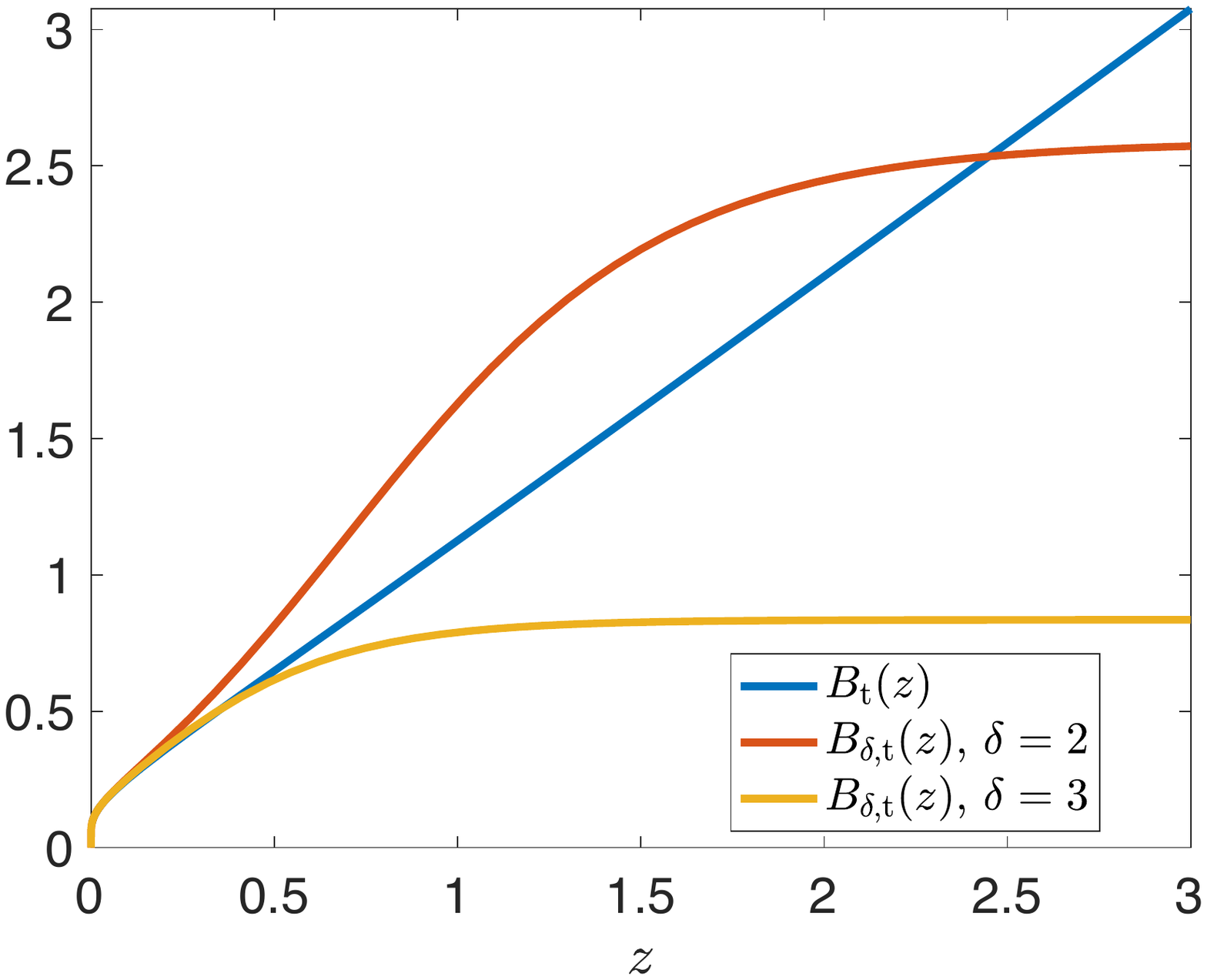}
\includegraphics[scale=0.45]{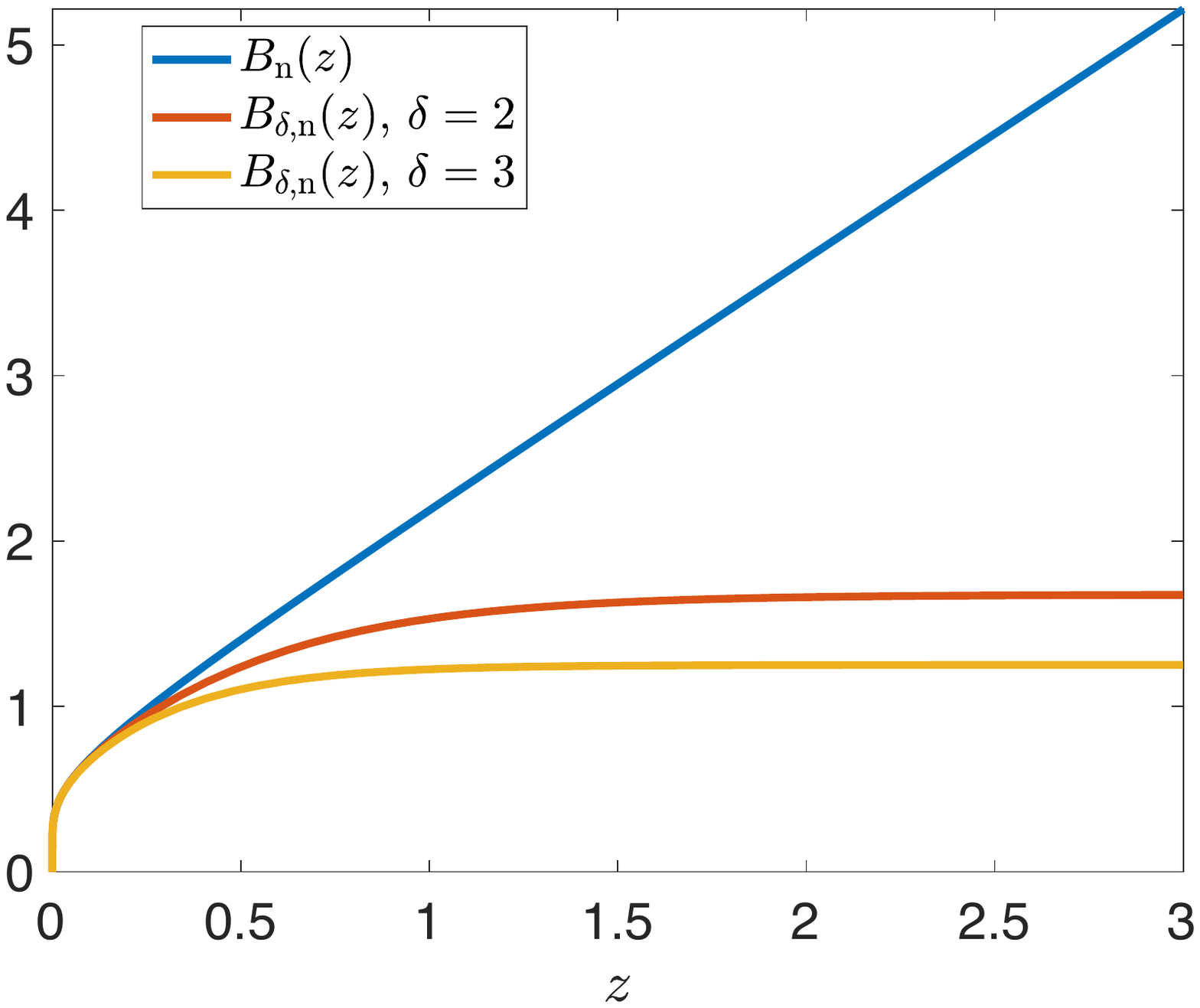}\\
\caption{Plot of the functions $B_q(z)$ (blue), $B_{\delta,q}(z)$ with $\delta=2$ (red), and $B_{\delta,q}(z)$ with $\delta=3$ (yellow) for $q={\rm t}$ (left) and $q={\rm n}$ (right). Note the different scale from Figures \ref{fig:Compare_t} and \ref{fig:Compare_n}. As in the Laplace setting, we can see that $B_{\delta,q}(z)$ agrees closely with $B_q(z)$ near $z=0$ for both $q={\rm t},{\rm n}$ and diverges only linearly as $z\to\infty$. }
\label{fig:BdeltaS}
\end{figure}

It remains to show the refined estimates \eqref{REG_diff1} and \eqref{REG_diff2} for small $k$. We begin with the tangential direction \eqref{REG_diff1}. \\

We consider the function
\begin{align*}
B_{\delta,{\rm t}}(z) = \frac{1}{ -1+ 2\log(\delta) + 2 K_0(\delta z)}
\end{align*}
on the interval $0\le z \le \frac{1}{4}$ and note that $\lambda^{\delta,{\rm t}}_k = 4\pi B_{\delta,{\rm t}}(\pi\epsilon \abs{k})$. Now, $B_{\delta,{\rm t}}(z)$ satisfies a nearly identical ODE to $B_\delta(z)$ \eqref{BdeltODE} in the Laplace setting (Section \ref{sec:REG}). In particular,
\begin{align*}
\frac{d B_{\delta,{\rm t}}}{dz} = 2\delta K_1(\delta z) B_{\delta,{\rm t}}^2(z), \qquad B_{\delta,{\rm t}}(0) = 0.
\end{align*}

Recalling the definition of $B_{\rm t}(z)$ \eqref{Bt} and its ODE \eqref{BtODE}, the absolute value of the difference $\wt B_{\rm t} := B_{\delta,{\rm t}}(z) - B_{\rm t}(z)$ satisfies
\begin{align*}
\frac{d |\wt B_{\rm t}|}{dz} = \bigg(\frac{2}{z}(B_{\delta,{\rm t}} + B_{\rm t})\wt B_{\rm t} + 2\bigg(\delta K_1(\delta z)-\frac{1}{z} \bigg) B_{\delta,{\rm t}}^2 + 2\frac{K_0(z)}{K_1(z)}B_{\rm t}\bigg){\rm sgn}(\wt B_{\rm t}), \qquad \wt B_{\rm t}(0)=0. 
\end{align*}

Now, for $0\le z\le \frac{1}{4}$, using Proposition \ref{bessel_smallz}, equation \eqref{K0_bd}, we have
\begin{align*}
B_{\delta,{\rm t}}(z) \le \frac{1}{-1 - 2\log z} \le \frac{1}{\big(-1 - \frac{3}{4}\log(\frac{1}{4})\big) - \frac{5}{4}\log z} \le \frac{4}{5\abs{\log z}}.
\end{align*}
Again, we note that Proposition \ref{bessel_smallz} is stated for $\delta z<1$ but clearly holds for $\delta z\ge 1$; therefore we do not need an upper bound on our choice of $\delta$. \\

We define the constant 
\begin{equation}\label{ct2}
c_{\rm t, 2} := \frac{4}{5\abs{\log(1/4)}} + B_{\rm t}(1/4) \approx 0.9835,
\end{equation}
and note in particular that $B_{\delta,{\rm t}}(z)+B_{\rm t}(z) \le c_{\rm t, 2} <1$. \\

Then, using the bounds \eqref{delK1_bd} from Section \ref{sec:REG} and \eqref{KBt_bd} from Section \ref{sec:tan_diff}, we obtain
\begin{equation}\label{BtProc}
\begin{aligned}
\frac{d |\wt B_{\rm t}|}{dz} &\le \frac{2c_{{\rm t},2}}{z} |\wt B_{\rm t}| + 2\delta^2z(1+\log\delta+ \abs{\log z}) \frac{16}{25\abs{\log z}^2} + 2z \\
&\le  \frac{2c_{{\rm t},2}}{z} |\wt B_{\rm t}| + 4\delta^2z(1+\log\delta) + 2z \\
&\le  \frac{2c_{{\rm t},2}}{z} |\wt B_{\rm t}| + 6\delta^2(1+\log\delta)z, \qquad \wt B_{\rm t}(0)=0. 
\end{aligned}
\end{equation}
Here we have also used that $\frac{1}{\abs{\log z}} \le \frac{1}{\abs{\log(1/4)}} \approx 0.7213 <1$. \\

We may then use a Gr\"onwall inequality in the exact same way as the Laplace setting (equation \eqref{GronLapReg}) to obtain
\begin{equation}\label{wtBt}
\abs{\wt B_{\rm t}(z)} \le \frac{6 \delta^2(1+\log\delta)}{1-c_{\rm t,2}} z^2.
\end{equation}
Again, choosing the interval $0\le z\le \frac{1}{4}$ ensures that $c_{\rm t,2}<1$ so that the Gr\"onwall argument is valid. Using $z=\pi\epsilon \abs{k}$ in \eqref{wtBt} then yields \eqref{REG_diff1}. \\

The proof of the normal direction bound \eqref{REG_diff2} proceeds similarly. We consider
\begin{align*}
B_{\delta,{\rm n}}(z) &= \frac{4}{1+ 2\log\delta +2K_0(\delta z)}
\end{align*}
on the interval $0\le z\le \frac{2}{3}$ (see Figure \ref{fig:BdeltaS}) and note that $\lambda^{\delta,{\rm n}}_k=2\pi B_{\delta,{\rm n}}(\pi\epsilon \abs{k})$. As in the Laplace and tangential Stokes settings, we have
\begin{align*}
\frac{d B_{\delta,{\rm n}}}{dz} = \frac{1}{2}\delta K_1(\delta z) B_{\delta,{\rm n}}^2(z), \qquad B_{\delta,{\rm n}}(0) = 0.
\end{align*}
Furthermore, for $0\le z\le \frac{2}{3}$, by Proposition \ref{bessel_smallz}, equation \eqref{K0_bd}, we have
\begin{align*}
B_{\delta,{\rm n}}(z) \le \frac{4}{1+2\abs{\log z}}.
\end{align*}

Recalling the form of $B_{\rm n}(z)$ \eqref{Bn} and its ODE \eqref{BnODE}, the difference $\wt B_{\rm n}(z) := B_{\delta,{\rm n}}(z) - B_{\rm n}(z)$ satisfies
\begin{align*}
\frac{d |\wt B_{\rm n}|}{dz} = \bigg(\frac{1}{2z}(B_{\delta,{\rm n}} + B_{\rm n})\wt B_{\rm n} + \frac{1}{2}\bigg(\delta K_1(\delta z)-\frac{1}{z} \bigg) B_{\delta,{\rm n}}^2 + h(z)\bigg){\rm sgn}(\wt B_{\rm n}), \qquad \wt B_{\rm n}(0)=0,
\end{align*}
where $h(z)$ is as in \eqref{h_z}. \\

Then, defining
\begin{equation}\label{cn2}
c_{\rm n, 2} := \frac{4}{1+2\abs{\log(2/3)}} + B_{\rm n}(2/3) \approx 3.8765,
\end{equation}
and noting $B_{\delta,{\rm n}}(z)+B_{\rm n}(z) \le c_{\rm n, 2} <4$ for $0\le z\le \frac{2}{3}$, we may use the bounds \eqref{delK1_bd} and Proposition \ref{h_bd} to obtain
\begin{align*}
\frac{d |\wt B_{\rm n}|}{dz} &\le \frac{c_{\rm n, 2}}{2z}|\wt B_{\rm n}| + \frac{1}{2}\delta^2z(1+\log\delta+\abs{\log z}) \frac{16}{(1+2\abs{\log z})^2}+ \frac{9}{8}z\\
&\le \frac{c_{\rm n, 2}}{2z}|\wt B_{\rm n}| + 10\delta^2(1+\log\delta)z, \qquad \wt B_{\rm n}(0)=0.
\end{align*}
Here we have also used that $\frac{1}{1+2\abs{\log z}} \le \frac{1}{1+2\abs{\log(2/3)}} \approx 0.5522 <1$. \\

We may then use a Gr\"onwall argument as in equation \eqref{GronLapReg} to yield
\begin{equation}\label{wtBn}
\abs{\wt B_{\rm n}(z)}\le \frac{20\delta^2(1+\log\delta)}{4-c_{\rm n, 2}} z^2,
\end{equation}
which is valid since $c_{\rm n, 2}<4$. Taking $z=\pi\epsilon \abs{k}$ in \eqref{wtBn} then yields \eqref{REG_diff2}, thereby completing the proof of Lemma \ref{diff_REGS}.
\end{proof}

\subsection{Proof of Theorem \ref{thm:errREGS}: error in $\delta$-regularized Stokes expression}\label{sec:REGSerr}
Finally, we use Lemma \ref{diff_REGS} to prove the error estimate of Theorem \ref{thm:errREGS}. The proof follows the same steps as the proof of Theorem \ref{thm:errS} in Section \ref{Stokes_err}, with Lemma \ref{diff_REGS} replacing Lemmas \ref{tan_eig_diff} and \ref{nor_eig_diff}.

\begin{proof}[Proof of Theorem \ref{thm:errREGS}]
We begin by taking $N'=\floor{\frac{1}{4\pi\epsilon}}$ and $M'=\floor{\frac{2}{3\pi\epsilon}}$. For vector-valued $\bu\in H^1(\T)$, we recall the notation prior to \eqref{LambdaNM} for the Fourier coefficients of $\bu$. Then, using Lemma \ref{diff_REGS}, we have
\begin{equation}\label{errScomp0}
\begin{aligned}
&\norm{\mc{L}_\epsilon^{-1}[\bu] -(\mc{L}_\epsilon^\delta)^{-1}[\bu]}_{L^2(\T)}^2 \\
&\hspace{1cm} = \sum_{\abs{k}=1}^{N'} \big(\lambda^{\rm t}_k-\lambda^{\delta,{\rm t}}_k\big)^2 \abs{\wh u_{z,k}}^2 + \sum_{\abs{k}=1}^{M'} \big(\lambda^{\rm n}_k-\lambda^{\delta,{\rm n}}_k \big)^2 \big(\abs{\wh u_{x,k}}^2 + \abs{\wh u_{y,k}}^2 \big) \\
&\hspace{2cm} + \sum_{\abs{k}=N'+1}^\infty \big(\lambda^{\rm t}_k-\lambda^{\delta,{\rm t}}_k\big)^2 \abs{\wh u_{z,k}}^2 + \sum_{\abs{k}=M'+1}^\infty \big(\lambda^{\rm n}_k-\lambda^{\delta,{\rm n}}_k\big)^2  \big(\abs{\wh u_{x,k}}^2 + \abs{\wh u_{y,k}}^2 \big) \\
&\hspace{1cm} \le C\delta^4(1+\log\delta)^2\bigg(\sum_{\abs{k}=1}^{N'} \epsilon^4k^4 \abs{\wh u_{z,k}}^2 + \sum_{\abs{k}=1}^{M'} \epsilon^4k^4 \big(\abs{\wh u_{x,k}}^2 + \abs{\wh u_{y,k}}^2 \big) \bigg) \\
&\hspace{2cm} + (4\pi)^2 \sum_{\abs{k}=N'+1}^\infty \bigg(\frac{1}{2}+\frac{1}{-1+2\log\delta}+ \pi\epsilon \abs{k} \bigg)^2 \abs{\wh u_{z,k}}^2 \\
&\hspace{2cm}+ (3\pi)^2\sum_{\abs{k}=M'+1}^\infty \bigg(1+ \frac{8}{3(1+2\log\delta)} + \pi\epsilon \abs{k}\bigg)^2 \big(\abs{\wh u_{x,k}}^2 + \abs{\wh u_{y,k}}^2 \big)  \\
&\hspace{1cm} \le C\epsilon^4\delta^4(1+\log\delta)^2\bigg((N')^2\sum_{\abs{k}=1}^{N'} k^2 \abs{\wh u_{z,k}}^2 + (M')^2 \sum_{\abs{k}=1}^{M'} k^2 \big(\abs{\wh u_{x,k}}^2 + \abs{\wh u_{y,k}}^2 \big) \bigg) \\
&\hspace{2cm}+ C\bigg( \sum_{\abs{k}=N'+1}^\infty \bigg(\frac{1}{k^2}+\frac{1}{(-1+2\log\delta)^2}\frac{1}{k^2}+\epsilon^2 \bigg)k^2 \abs{\wh u_{z,k}}^2 \\
&\hspace{3cm}+ \sum_{\abs{k}=M'+1}^\infty \bigg(\frac{1}{k^2}+\frac{1}{(1+2\log\delta)^2}\frac{1}{k^2}+\epsilon^2\bigg)k^2 \big(\abs{\wh u_{x,k}}^2 + \abs{\wh u_{y,k}}^2 \big) \bigg) \\
&\hspace{1cm}\le \bigg(C \delta^4(1+\log\delta)^2\epsilon^4((N')^2 + (M')^2) + \frac{C}{(N')^2}+ \frac{C}{(-1+2\log\delta)^2(N')^2} \\
&\hspace{5cm}+ \frac{C}{(M')^2}+ \frac{C}{(1+2\log\delta)^2(M')^2} + C\epsilon^2\bigg) \norm{\bu}_{H^1(\T)}^2.
\end{aligned}
\end{equation}

If $\bu\in H^2(\T)$, we instead obtain 
\begin{equation}\label{errScomp1}
\begin{aligned}
&\norm{\mc{L}_\epsilon^{-1}[\bu] -(\mc{L}_\epsilon^\delta)^{-1}[\bu]}_{L^2(\T)}^2 \\
&\hspace{1cm} \le C\epsilon^4\delta^4(1+\log\delta)^2\bigg(\sum_{\abs{k}=1}^{N'} k^4 \abs{\wh u_{z,k}}^2 + \sum_{\abs{k}=1}^{M'} k^4 \big(\abs{\wh u_{x,k}}^2 + \abs{\wh u_{y,k}}^2 \big) \bigg) \\
&\hspace{2cm}+ C\bigg( \sum_{\abs{k}=N'+1}^\infty \bigg(\frac{1}{k^4} + \frac{1}{(-1+2\log\delta)^2}\frac{1}{k^4} +\frac{\epsilon^2}{k^2} \bigg)k^4 \abs{\wh u_{z,k}}^2 \\
&\hspace{2cm} + \sum_{\abs{k}=M'+1}^\infty \bigg(\frac{1}{k^4} + \frac{1}{(1+2\log\delta)^2}\frac{1}{k^4} +\frac{\epsilon^2}{k^2}\bigg)k^4 \big(\abs{\wh u_{x,k}}^2 + \abs{\wh u_{y,k}}^2 \big) \bigg) \\
&\hspace{1cm}\le \bigg(C \delta^4(1+\log\delta)^2\epsilon^4 + \frac{C}{(N')^4} + \frac{C}{(M')^4} + \frac{C}{(-1+2\log\delta)^2(N')^4}  \\
&\hspace{5cm} +\frac{C}{(1+2\log\delta)^2(M')^4} + \frac{C\epsilon^2}{(N')^2} +\frac{C\epsilon^2}{(M')^2}\bigg) \norm{\bu}_{H^2(\T)}^2.
\end{aligned}
\end{equation}

Using that $N'=\floor{\frac{1}{4\pi\epsilon}}$ and $M'=\floor{\frac{2}{3\pi\epsilon}}$ in \eqref{errScomp0} and \eqref{errScomp1}, we obtain Theorem \ref{thm:errREGS}.
\end{proof}

\appendix
\section{Spectrum of the slender body approximation}\label{sec:gotz}
Here we reiterate the derivation of the spectrum of the slender body approximation $\mc{L}^\SB_\epsilon$ about $\mc{C}_\epsilon$ (Proposition \ref{KRop_eigs}), which was studied in depth by G\"otz \cite{gotz2000interactions} and later by \cite{shelley2000stokesian,tornberg2004simulating}.\\

 G\"otz essentially considers the Laplace slender body approximation \eqref{gotz2} for a non-periodic fiber with straight centerline and radius $\epsilon$. Ignoring endpoint effects at $s=\pm 1$, the expression becomes 
\begin{equation}\label{gotz1}
4\pi \overline u(s) = L(s) f(s) + \int_{-1}^1 \frac{f(s') - f(s)}{\abs{s-s'}} \, ds',
\end{equation}
where $L(s) = \log\big( \frac{4(1-s^2)}{\epsilon^2}\big)$. G\"otz actually arrives at an expression of the form \eqref{gotz1} by considering the Stokes approximation \eqref{SBT_expr} for a fiber with straight, non-periodic centerline in cross flow (unidirectional fluid velocity perpendicular to the slender body centerline). In this case, the integral operator is the same as in \eqref{gotz1} but the local term is given by $(1+L(s))f(s)$, and the left hand side is scaled by $8\pi$ rather than $4\pi$. \\

G\"otz then studies properties of the integral operator also known as the $S$-transform, 
\begin{equation}\label{S_op}
S[\varphi](s) := \int_{-1}^1  \frac{\varphi(s') - \varphi(s)}{\abs{s-s'}} \, ds',
\end{equation}
first introduced by Tuck in \cite{tuck1964some}. G\"otz and Tuck show that the operator $S$ is diagonalizable by the Legendre polynomials $\{P_k\}$, which form an orthogonal basis for $L^2(-1,1)$ and satisfy the recurrence relation
\begin{equation}\label{reccurence_rel}
P_0(t)=1, \, P_1(t)= t, \, P_{k+1}=\frac{2k+1}{k+1}t P_k(t) - \frac{k}{k+1}P_{k-1}(t).
\end{equation}
In particular, the operator $S$ satisfies 
\[ S[P_k]= -\mu_k P_k, \quad k=0,1,2,3,\dots \]
where 
\[ \mu_k := 2\sum_{j=1}^k \frac{1}{j}; \quad \mu_0=0.\]

By property of harmonic series, we have 
\begin{equation}\label{harm_ser} 
\lim_{k\to\infty} ( \mu_k - 2\log k) = 2 \gamma ,
\end{equation}
where $\gamma\approx0.5772$ is the Euler constant. \\

For a periodic filament, the expression \eqref{gotz1} must be amended to account for periodicity. Using the relation \eqref{periodic},
we may rewrite \eqref{gotz1} as our periodic Laplace slender body approximation \eqref{gotz2}. \\

In \cite{shelley2000stokesian}, Shelley and Ueda perform a spectral calculation similar to that of G\"otz to show that the eigenvalues $\mu_k^\text{per}$ of the periodic integral operator in \eqref{gotz2} satisfy 
\begin{align*}
\mu_k^\text{per} = 4\sum_{j=1}^{\abs{k}} \frac{1}{2j-1}, \, \abs{k}=1,2,3,\dots.
\end{align*}
Using the asymptotic relation \eqref{harm_ser}, we have that in the periodic setting, the eigenvalues of the forward slender body operator $\mc{L}^\SB_\epsilon$ are approximated by
\begin{equation}\label{SBT_spec_per0}
\frac{1}{\lambda^{\SB}_k} = -\frac{1}{2\pi}(\log(\pi\epsilon \abs{k}/2) +\gamma),
\end{equation}
and thus the eigenvalues of the inverse operator $(\mc{L}^\SB_\epsilon)^{-1}$ are given by 
\begin{equation}\label{SBT_spec_per}
\lambda^{\SB}_k = -\frac{2\pi}{\log(\pi\epsilon \abs{k}/2) +\gamma}.
\end{equation}

The asymptotic formula \eqref{SBT_spec_per} coincides almost exactly with the sum formula at each $\abs{k}=1,2,3,\dots$, and all subsequent analysis of the spectrum of the inverse operator $(\mc{L}^\SB_\epsilon)^{-1}$ will use the formula \eqref{SBT_spec_per}. \\

In the Stokes setting about $\mc{C}_\epsilon$, the slender body approximation \eqref{SBT_expr} becomes
\begin{equation}\label{Ceps_SBT}
 \mc{L}_\epsilon^\SB[\bm{f}](s) = \frac{1}{8\pi}\bigg[\big[({\bf I}- 3\be_z\be_z^{\rm T}){\bm f}(s) - ({\bf I}+\be_z\be_z^{\rm T}) \bigg(2 \log(\pi\epsilon/4){\bm f}(s) - \int_{\T} \frac{{\bm f}(s')-{\bm f}(s) }{|\sin (\pi(s-s'))/\pi|} \, ds' \bigg) \bigg].
\end{equation}

Dotting \eqref{Ceps_SBT} with $\be_z$ and using the above Laplace analysis, we obtain the form \eqref{KR_eigs_St} of $\lambda^{\SB,{\rm t}}_k$. Similarly, dotting \eqref{Ceps_SBT} with $\be_x$ or $\be_y$ and using the Laplace analysis yields $\lambda^{\SB,{\rm n}}_k$ as in \eqref{KR_eigs_Sn}. 

\section{Eigenvalues of the Stokes slender body PDE}\label{app_eval}
In this appendix, we calculate the eigenvalues of the operator $(\mc{L}_\epsilon)^{-1}$ for the slender body Stokes PDE about $\mc{C}_\epsilon$. The procedure is similar to the Laplace setting (Section \ref{sec:lap_calc}), but involves solving more complicated ODEs for the components of the fiber velocity. The calculations are also related to those of \cite{rummler1997eigenfunctions} for cylindrical interior domains. The tangential eigenvalues, given by \eqref{eigsT}, are calculated in Section \ref{app_tan_spec}, and the normal eigenvalues, given by \eqref{eigsN}, are calculated in Section \eqref{app_nor_spec}. 

\subsection{Calculation of the tangential spectrum}\label{app_tan_spec}
In this section we calculate the form \eqref{eigsT} of the tangential eigenvalues $\lambda^{\rm t}_k$ of the slender body PDE operator $\mc{L}_\epsilon^{-1}$. We consider the boundary value problem
\begin{equation}\label{z_flow}
\begin{aligned}
-\Delta \bu +\nabla p &=0, \quad \dive\,\bu=0 \quad \text{in } (\R^2\times \T)\backslash \overline{\mc{C}_\epsilon}   \\
\bu(z) &= e^{i\pi kz}\be_z \hspace{1.4cm} \text{on }\p \mc{C}_\epsilon.
\end{aligned}
\end{equation}
We aim to solve for $\lambda$ satisfying 
\[ \bm{f}(z) := \int_0^{2\pi} (\bm{\sigma n}) \epsilon \, d\theta = \lambda \bu(z) \]
along $\p\mc{C}_\epsilon$ by first solving \eqref{z_flow} for $(\bu,p)$ in the exterior of $\mc{C}_\epsilon$. We consider $(\bu,p)$ of the form 
\[\bu(r,z) = \begin{pmatrix} 
U_r(r) \\
0 \\
U_z(r)
\end{pmatrix} e^{i \pi kz}, \quad p(r,z) = \overline p(r) e^{i \pi kz} \]
with $U_r(\epsilon)=0$ and $U_z(\epsilon)=1$. Since the pressure is harmonic, we have that $\overline p$ satisfies 
\begin{equation}\label{press_z}
\p_{rr}\overline p +\frac{1}{r}\p_r \overline p - \pi^2 k^2 \overline p =0,
\end{equation}
and therefore, due to decay at infinity, $\overline p= c_{\rm p} K_0(\pi r\abs{k})$, a zeroth order modified Bessel function of the second kind. Using the form of $\overline p$ in the momentum equations, we have that $U_r$ and $U_z$ satisfy
\begin{align}
\label{UrEq}
\p_{rr} U_r + \frac{1}{r}\p_r U_r - \bigg(\pi^2k^2+ \frac{1}{r^2} \bigg)U_r = - c_{\rm p} \pi \abs{k} K_1(\pi r\abs{k}) \\
\label{UzEq}
\p_{rr} U_z + \frac{1}{r}\p_r U_z - \pi^2k^2 U_z = i c_{\rm p} \pi k K_0(\pi r\abs{k}).
\end{align}
Here $K_1$ is a first order modified Bessel function of the second kind. The solutions of \eqref{UrEq} and \eqref{UzEq} have the form 
\begin{align}
\label{Ur}
U_r(r) &= c_1 K_1(\pi r \abs{k}) + \frac{c_{\rm p} r}{2} K_0(\pi r \abs{k}) \\
\label{Uz}
U_z(r) &=c_0K_0(\pi r \abs{k}) - \frac{i c_{\rm p} r}{2} K_1(\pi r \abs{k}) {\rm sgn}(k),
\end{align}
and, using the boundary conditions at $r=\epsilon$, we obtain
\begin{equation}\label{c1c0}
c_1 = -\frac{c_{\rm p}\epsilon K_0(\pi\epsilon \abs{k})}{2 K_1(\pi\epsilon \abs{k})}, \quad c_0 = \frac{1}{K_0(\pi\epsilon \abs{k})} + \frac{i c_{\rm p} \epsilon K_1(\pi\epsilon \abs{k})}{2K_0(\pi\epsilon \abs{k})}{\rm sgn}(k).
\end{equation}

Finally, plugging \eqref{Ur} and \eqref{Uz} into the incompressibility condition
\begin{equation}\label{div_freez}
\p_r U_r + \frac{1}{r}U_r + i \pi k U_z =0,
\end{equation}
we find that 
\begin{equation}\label{cpz}
c_{\rm p} = \frac{- i \, 2\pi k K_1(\pi\epsilon \abs{k})}{2K_0(\pi\epsilon \abs{k})K_1(\pi\epsilon \abs{k}) + \pi\epsilon \abs{k} \big( K_0^2(\pi\epsilon \abs{k}) - K_1^2(\pi\epsilon \abs{k}) \big) }.
\end{equation}

The force density $\bm{f}$ along $\mc{C}_\epsilon$ is given by 
\begin{align*}
\bm{f}(z) &= \int_0^{2\pi}(\bm{\sigma}\bm{n}) \, \epsilon \, d\theta \\
&= \int_0^{2\pi}\bigg(-\frac{\p \bu}{\p r}- \bigg(\frac{\p\bu}{\p r}\cdot\be_r\bigg)\be_r - \frac{1}{\epsilon} \bigg(\frac{\p\bu}{\p\theta}\cdot\be_r \bigg)\be_{\theta} - \bigg(\frac{\p\bu}{\p z}\cdot\be_r \bigg)\be_z + p \be_r\bigg)\epsilon \, d\theta \\
&= -\int_0^{2\pi}\bigg((2\p_r U_r - \overline p)\be_r +(\p_r U_z + i\pi k U_r)\be_z \bigg) e^{i\pi kz} \, \epsilon \, d\theta \\
&= -2\pi \epsilon (\p_r U_z(\epsilon) + i\pi k U_r(\epsilon)) e^{i\pi kz} \be_z,
\end{align*}
where we have used that $\be_r=\cos\theta\be_x+\sin\theta \be_y$ and thus integrates to zero in $\theta$. Now, using \eqref{div_freez} and \eqref{UrEq}, we have
\begin{align*}
\p_r U_z(r) &= \frac{i}{\pi k}\bigg(\p_{rr}U_r+\frac{1}{r}\p_rU_r-\frac{1}{r^2}U_r\bigg) \\
&= i\pi kU_r(r) - i c_{\rm p} K_1(\pi r\abs{k}){\rm sgn}(k).
\end{align*}

Since $U_r(\epsilon)=0$, the force density $\bm{f}$ becomes
\begin{align*}
 \bm{f}(z) &= i c_{\rm p} \, 2\pi \epsilon K_1(\pi \epsilon \abs{k})\, {\rm sgn}(k) \, \bu(z),
 \end{align*}
and the tangential eigenvalues are thus given by \eqref{eigsT}. \\

We now show that the eigenvalues $\lambda^{\rm t}_k$ given by \eqref{eigsT} satisfy the linear growth bounds \eqref{Tgrowth}.
\begin{proof}[Proof of the growth rate \eqref{Tgrowth}]
To show the growth bound \eqref{Tgrowth}, we recall the definition of the function $B_{\rm t}(z)$ \eqref{Bt}, which satisfies $\lambda^{\rm t}_k = 4\pi B_{\rm t}(\pi\epsilon \abs{k})$. By Proposition \ref{Bt_bd} in Section \ref{sec:tan_diff} that 
\[ B_{\rm t}(z)>z \frac{K_0(z)}{K_1(z)} > z \]
 for $z>0$, which implies the lower bound of \eqref{Tgrowth}.  \\

Additionally, we have that 
\begin{align*}
B_{\rm t}(z) < \frac{zK_1(z)}{K_0(z)}
\end{align*}
for all $z> 0$, since 
\begin{align*}
\frac{zK_1(z)}{K_0(z)} - B_{\rm t}(z) &= \frac{zK_1(z)}{K_0(z)}\bigg(\frac{K_0(z)K_1(z)+z \big(K_0^2(z)-K_1^2(z) \big)}{2K_0(z)K_1(z)+z \big(K_0^2(z)-K_1^2(z) \big)} \bigg) > 0 
\end{align*}
by \eqref{diffLB}. The upper bound of \eqref{Tgrowth} then follows from Lemma \ref{bessel} using $z=\pi\epsilon \abs{k}$.
\end{proof}

\subsection{Calculation of the normal spectrum}\label{app_nor_spec}
Here we calculate the form of the normal eigenvalues $\lambda^{\rm n}_k$ \eqref{eigsN} of the slender body PDE operator $(\mc{L}_\epsilon)^{-1}$, given in Proposition \ref{Stokes_spec}. We consider here the $\be_x$ direction and note that the calculation for the $\be_y$ direction is identical. We thus consider the boundary value problem
\begin{equation}\label{x_flow}
\begin{aligned}
-\Delta \bu +\nabla p &=0, \quad \dive\,\bu=0 \qquad \hspace{3.7cm} \text{in } (\R^2\times \T)\backslash \overline{\mc{C}_\epsilon} \\
\bu(z) &=  e^{i\pi kz}\be_x = \cos\theta e^{i\pi kz}\be_r - \sin\theta e^{i\pi kz}\be_\theta \qquad \text{on }\p \mc{C}_\epsilon.
\end{aligned}
\end{equation}
We wish to solve for $\lambda$ satisfying 
\[ \bm{f}(z) := \int_0^{2\pi} (\bm{\sigma n}) \epsilon \, d\theta = \lambda \bu(z) \]
along $\p\mc{C}_\epsilon$. To do so, we first solve \eqref{x_flow} for $(\bu,p)$ in the exterior of $\mc{C}_\epsilon$. Due to the boundary conditions on $\p \mc{C}_\epsilon$, we look for $(\bu,p)$ of the form
\[ \bu(r,\theta,z) = \begin{pmatrix}
U_r(r) \cos\theta \\
-U_\theta(r) \sin\theta \\
U_z(r) \cos\theta
\end{pmatrix} e^{i\pi kz}, \quad p(r,\theta,z) = \overline p(r) \cos\theta e^{i\pi kz}. \]

Again, since $p$ is harmonic, we have 
\begin{equation}\label{press_x}
\p_{rr}\overline p +\frac{1}{r}\p_r \overline p - \bigg(\pi^2k^2+ \frac{1}{r^2}\bigg) \overline p =0, 
\end{equation}
which, along with the boundary condition $\overline p\to 0$ as $r\to\infty$, yields $\overline p=c_{\rm p} K_1(\pi r\abs{k})$, where $K_1$ is a first-order modified Bessel function of the second kind.  \\

Next, from the momentum equation $\Delta\bu=\nabla p$, using the form of the pressure \eqref{press_x}, we have that $U_r$, $U_\theta$, and $U_z$ satisfy the following ODEs:
\begin{align}
\label{Ur1}
\p_{rr}U_r +\frac{1}{r}\p_r U_r + \frac{2}{r^2}(U_\theta-U_r) - \pi^2k^2U_r &= - c_{\rm p} \pi \abs{k} K_0(\pi r\abs{k}) - \frac{c_{\rm p}}{r} K_1(\pi r\abs{k})  \\
\label{Uthe2}
\p_{rr}U_\theta +\frac{1}{r}\p_r U_\theta + \frac{2}{r^2}(U_r-U_\theta) - \pi^2 k^2U_\theta &= \frac{c_{\rm p}}{r} K_1(\pi r\abs{k}) \\
\label{Uz3}
\p_{rr}U_z +\frac{1}{r}\p_r U_z - \bigg(\pi^2 k^2+\frac{1}{r^2}\bigg)U_z &= i\pi k c_{\rm p} K_1(\pi r\abs{k}).
\end{align}

We may immediately solve the $U_z$ equation to obtain 
\begin{equation}\label{Uz3Sol}
U_z(r) = c_1 K_1(\pi r\abs{k}) - \frac{i c_{\rm p} r}{2 } K_0(\pi r\abs{k}){\rm sgn}(k), 
\end{equation}
where, using the boundary condition $U_z(\epsilon)=0$, we have
\begin{equation}\label{c1}
c_1 = \frac{i c_{\rm p}\epsilon K_0(\pi\epsilon\abs{k})}{2 K_1(\pi\epsilon \abs{k})}{\rm sgn}(k).
\end{equation}

To solve for $U_r$ and $U_\theta$, we let $U^{-}=U_r-U_\theta$ and $U^{+}=U_r+U_\theta$. Then, using the Bessel function identity 
 \begin{equation}\label{bessel_ID}
K_2(z) = K_0(z) + 2\frac{K_1(z)}{z} ,
\end{equation}
the equations \eqref{Ur1} and \eqref{Uthe2} can be decoupled as 
\begin{align}
\label{Um}
\p_{rr} U^{-} + \frac{1}{r}\p_rU^{-} - \bigg(\pi^2 k^2+\frac{4}{r^2} \bigg) U^{-} &= - c_{\rm p} \pi \abs{k} K_2(\pi r\abs{k})  \\
 \label{Up}
\p_{rr} U^{+} + \frac{1}{r}\p_r U^{+} - \pi^2k^2 U^{+} &= - c_{\rm p}\pi \abs{k} K_0(\pi r\abs{k}), 
\end{align}
along with the boundary conditions $U^+\to 0$ and $U^- \to 0$ as $r\to \infty$ while $U^{-}(\epsilon)=0$ and $U^{+}(\epsilon)=2$. \\

The general solutions to \eqref{Um} and \eqref{Up} are given by 
\begin{align}
\label{UmSol}
U^-(r) &= c_2 K_2(\pi r\abs{k})+ \frac{c_{\rm p} r}{2 } K_1(\pi r\abs{k}) \\
\label{UpSol}
U^+(r) &= c_0 K_0(\pi r\abs{k})+ \frac{c_{\rm p} r}{2 } K_1(\pi r\abs{k}) .
\end{align}
Using the boundary conditions for $U^+$ and $U^-$, we find that
\begin{equation}\label{c0c2}
c_0 = \frac{2}{K_0(\pi\epsilon \abs{k})} - \frac{c_{\rm p}\epsilon K_1(\pi\epsilon \abs{k})}{2K_0(\pi\epsilon \abs{k})}, \qquad c_2 = -\frac{c_{\rm p}\epsilon K_1(\pi\epsilon \abs{k})}{2K_2(\pi\epsilon \abs{k})}.
\end{equation}

Finally, we use the incompressibility condition $\dive\,\bu=0$ to solve for the constant $c_{\rm p}$. Plugging $U_z$,  $U_r = (U^+ + U^-)/2$, and $U_\theta = (U^+ - U^-)/2$ into the resulting equation 
\begin{equation}\label{div_freex}
\p_r U_r + \frac{1}{r}(U_r - U_\theta) + i\pi k U_z =0,
\end{equation}
we obtain 
\begin{equation}\label{cpN}
c_{\rm p} = \frac{4\pi \abs{k} K_1 K_2}{2K_0K_1K_2 + \pi\epsilon \abs{k} \big( K_1^2(K_0 + K_2) -2K_0^2K_2 \big)},
\end{equation}
where each $K_j$, $j=0,1,2$, is evaluated at $\pi\epsilon \abs{k}$. \\

Now, the force density $\bm{f}$ along $\mc{C}_\epsilon$ is given by
\begin{align*}
\bm{f}(z) &= \int_0^{2\pi}(\bm{\sigma}\bm{n}) \, \epsilon \, d\theta \\
&= \int_0^{2\pi}\bigg(-\frac{\p \bu}{\p r}- \bigg(\frac{\p\bu}{\p r}\cdot\be_r\bigg)\be_r - \frac{1}{\epsilon} \bigg(\frac{\p\bu}{\p\theta}\cdot\be_r \bigg)\be_{\theta} - \bigg(\frac{\p\bu}{\p z}\cdot\be_r \bigg)\be_z + p \, \be_r\bigg)\epsilon \, d\theta \\
&= -\int_0^{2\pi}\bigg( \big(2\p_r U_r(\epsilon)-\overline p(\epsilon) \big) \cos\theta \,\be_r - \big(\p_rU_\theta(\epsilon)+ \frac{1}{\epsilon}(U_r(\epsilon)-U_\theta(\epsilon)) \big)\sin\theta \, \be_\theta \\
&\hspace{7cm} + \big(\p_rU_z(\epsilon)+ i\pi k U_r(\epsilon) \big)\cos\theta \, \be_z \bigg) e^{i\pi kz} \, \epsilon \, d\theta \\
&= -\pi \epsilon \big(2\p_r U_r(\epsilon)+ \p_rU_\theta(\epsilon) -\overline p(\epsilon) \big) e^{i\pi kz} \be_x.
\end{align*}
Here we have used that $\be_r=\cos\theta\be_x+\sin\theta\be_y$ and $\be_\theta=-\sin\theta\be_x+\cos\theta\be_y$. By \eqref{div_freex} and the boundary conditions at $\epsilon$, we may rewrite 
\begin{align*} 
2\p_r U_r(\epsilon)+ \p_rU_\theta(\epsilon)&=\p_r U^+(\epsilon) - \frac{1}{r}U^-(\epsilon) - i\pi k U_z(\epsilon) = \p_r U^+(\epsilon).
\end{align*}

Thus $\bm{f}$ may be written as
\begin{align*}
\bm{f}(z) &= -\pi \epsilon \big( \p_rU^+(\epsilon) - c_{\rm p}K_1(\pi\epsilon \abs{k})  \big) \bu(z) \\
&= \pi \bigg( \pi \epsilon \abs{k} c_0 K_1(\pi\epsilon \abs{k})+ \frac{c_{\rm p} \epsilon}{2} \big(\pi \epsilon \abs{k} K_0(\pi\epsilon \abs{k}) + 2 K_1(\pi\epsilon \abs{k})\big)\bigg)\bu(z).
\end{align*}

Using \eqref{c0c2} and \eqref{cpN}, we then have that the normal direction eigenvalues are given by \eqref{eigsN}. \\


We next show that the eigenvalues $\lambda^{\rm n}_k$ given by \eqref{eigsN} satisfy the linear growth bounds \eqref{Ngrowth}.
\begin{proof}[Proof of the growth rate \eqref{Ngrowth}]
To show the growth rate \eqref{Ngrowth} of $\lambda^{\rm n}_k$, we begin by recalling the definition \eqref{Bn} of the function $B_{\rm n}(z)$, and recall that $\lambda^{\rm n}_k= 2\pi B_{\rm n}(\pi \epsilon \abs{k})$. \\

We will first show that
\begin{equation}\label{Bn_lower}
B_{\rm n}(z) > \frac{3}{2}z, \qquad z>0,
\end{equation}
which then implies the lower bound of \eqref{Ngrowth}. Define
\begin{equation}\label{A_z}
A(z) = \frac{K_0(z)}{K_1(z)}, 
\end{equation}
and note that by Lemma \ref{lem:bessel}, we have
\begin{equation}\label{Az_bound}
\frac{2z}{2z+1} < A(z) < \frac{z+1}{\sqrt{z^2+z+1}+1}.
\end{equation}

Using the Bessel function identity \eqref{bessel_ID} along with the definition of $A(z)$ \eqref{A_z}, we rewrite $B_{\rm n}$ as 
\begin{align*}
B_{\rm n}(z) &= \frac{N_1(z)}{D_1(z)}, \\
N_1(z) &:= -z A^2 + 2zA  + (8+z^2)  \\
D_1(z) &:= -2z A^3 - 2 A^2+ \bigg(\frac{4}{z} + 2z\bigg) A  + 2.
\end{align*}

Proving the bound \eqref{Bn_lower} is then equivalent to showing
\begin{align*}
\frac{2}{z}N_1(z) - 3D_1(z) > 0.
\end{align*}

Using the upper and lower bounds \eqref{Az_bound} on $A(z)$, we have 
\begin{align*}
\frac{2}{z}N_1(z) - 3D_1(z) &= \frac{2}{z} \bigg(3 z^2 A^3 + 3zA^2 +2zA + (8+z^2) - z^2A^2 - (6 + 3 z^2)A -3z \bigg) \\
&> \frac{2}{z} \bigg( \frac{24 z^5}{(1 + 2 z)^3} + \frac{12 z^3}{(1 + 2 z)^2} + \frac{4 z^2}{1 + 2 z}+8+ z^2 \\
&\hspace{2cm} - \frac{z^2 (1 + z)^2}{(1 + \sqrt{1 + z + z^2})^2} - \frac{3(1 + z) (2 + z^2)}{1 + \sqrt{1 + z + z^2}}  - 3 z \bigg)\\
&\ge \frac{2}{z(1 + \sqrt{1 + z + z^2})^2} \bigg( 24 z^9+ z^8 (12 - 24 \sqrt{1 + z + z^2}) + 86 z^7 \\
&\qquad + z^6 (71 - 74 \sqrt{1 + z + z^2}) + \frac{587}{2}z^5 + 664 z^4 + \frac{2121}{2}z^3+ 915 z^2+ 387 z +60  \bigg) \\
&=: \frac{2}{z(1 + \sqrt{1 + z + z^2})^2} g_0(z).
\end{align*}

Now, $g_0(z)$ satisfies
\begin{align*}
g_0(z) &\ge 12z^7\bigg(2z^2 + z+1 - 2z\sqrt{1+z+z^2} \bigg) + 74 z^5\bigg(z^2 + \frac{71}{74}z + \frac{587}{148} - z\sqrt{1 + z + z^2} \bigg) \\
&= \frac{12z^7(z^2+2z+1)}{2z^2 + z+1 + 2z\sqrt{1+z+z^2}} + \frac{z^5( 20128 z^3+ 172012 z^2+ 166708 z+ 344569)}{2(148z^2 + 142z + 587 + 148 z\sqrt{1 + z + z^2})} \\
&\ge 0,
\end{align*}
and thus $\frac{2}{z}N_1(z) - 3D_1(z)>0$, proving \eqref{Bn_lower} and therefore the lower bound of \eqref{Ngrowth}. \\

To show the upper bound of \eqref{Ngrowth}, we must show that 
\begin{equation}\label{Bn_upper}
B_{\rm n}(z) < \frac{3}{2}(z+1), \qquad z>0.
\end{equation}

Using the identity \eqref{bessel_ID} for $K_2(z)$, we have
\begin{align*}
\frac{3}{2}(z+1) - B_{\rm n}(z) &= \frac{N_2(z)}{D_2(z)}; \\
N_2(z) &:=  -3z^2(z+1) A^3 + z(z^2-3z-3)A^2 + (3z^3 +z^2+6z+6)A -z(z^2 - 3z + 5),  \\
D_2(z) &:= 2z(1+zA)(1-A^2) +4A. 
\end{align*}
Clearly $D_2(z)> 0$ for $z> 0$, since $0< A(z)<1$ by \eqref{Az_bound}. Thus to show \eqref{Bn_upper}, it remains to show that $N_2(z)> 0$ for all $z> 0$. Using \eqref{Az_bound}, we have 
\begin{align*}
N_2(z) &> \frac{z}{(1+2z)^2(1+\sqrt{1+z+z^2})^3} \wt N_2(z); \\
\wt N_2(z) &:= -12 z^7 + 12 z^6 (-2 + \sqrt{1+z+z^2}) + 9 z^5 (-5 + 2 \sqrt{1+z+z^2}) \\
&\qquad +  z^4 (-27 + 35 \sqrt{1+z+z^2})  + 3 z^3 (9 + 11 \sqrt{1+z+z^2}) +  z^2 (65 + 37 \sqrt{1+z+z^2}) \\
&\qquad  + z (73 + 62 \sqrt{1+z+z^2}) +25 (1 + \sqrt{1+z+z^2})  \big).
\end{align*}

It suffices to show that $\wt N_2(z)>0$ for $z>0$. Using that $\sqrt{1+z+z^2}>\frac{1}{2}+z$, we have
\begin{align*}
\wt N_2(z) &> 75 + 258z + 291z^2 + 161z^3 + 47z^4 - 2z^5 - 24z^7 + 12 z^6 (-1 + 2\sqrt{1 +z +z^2}).
\end{align*}
If $0<z\le 1$, then
\begin{align*}
\wt N_2(z) &\ge 75-2-24-12 =37 >0.
\end{align*}
If $z>1$, then
\begin{align*}
\wt N_2(z) &\ge 2z^5\big(-1-6z - 12z^2 + 12z\sqrt{1+z+z^2}\big) \\
&= 2z^5 \frac{-1-12z+84z^2}{1+6z + 12z^2 + 12z\sqrt{1+z+z^2}} > 0.
\end{align*}
Thus \eqref{Bn_upper} holds, which implies the upper bound of \eqref{Ngrowth}. 
\end{proof}

\section{Proof of Proposition \ref{h_bd}: bound on $h(z)$}\label{app_hbd}
Here we prove Proposition \ref{h_bd} bounding the function $h(z)$ \eqref{h_z} from the ODE \eqref{BnODE} for $B_{\rm n}(z)$. The function $B_{\rm n}(z)$ in turn corresponds to the normal spectrum of the Stokes slender body PDE. 

\begin{proof}[Proof of Proposition \ref{h_bd}]
Recall the definition of the ratio $A(z)$ \eqref{A_z} as well as the bounds \eqref{Az_bound} satisfied by $A(z)$. We use \eqref{A_z} to rewrite $h(z)$ as 
\begin{align*}
h(z) &= \frac{z}{8}\frac{N_3(z)}{D_3(z)}, \\
N_3(z) &:= 4z^4A^6 - 16 z^3 A^5 - z^2 (120 + 11 z^2) A^4 + 4 z (-40 + 3 z^2) A^3 \\
       &\qquad  +2 (-16 + 66 z^2 + 5 z^4) A^2 + 4 z (32 + z^2) A - 3 z^2 (8 + z^2) \\
D_3(z) &:= (z^2 A^3 +  z A^2 - (2 + z^2) A - z )^2.
\end{align*}

To prove that $h(z)$ satisfies \eqref{hbound}, it suffices to show that both
\begin{align}
\label{upperh}
9D_3(z) -N_3(z) &> 0  \\
\label{lowerh}
\text{and} \qquad 9D_3(z) +N_3(z) &> 0.
\end{align}

To show \eqref{upperh}, we begin by rewriting $9D_3(z) -N_3(z)$ as 
\begin{equation}\label{9D3N3}
\begin{aligned}
9D_3(z) &-N_3(z) \\
&= z^4 A^2 \big(\sqrt{5} A^2 + \sqrt{2}\big)^2 + z^3 A \big(\sqrt{34} A^2 - \sqrt{14}\big)^2 + \big(\sqrt{93} A^2 - \sqrt{33}\big)^2 \\
&\hspace{2cm} +124z A^3+68A^2 +3z^4(1-A^2) \\
&\quad -\bigg((7-2\sqrt{10})z^4 A^4+(48-4\sqrt{119})z^3 A^3 +(114-6\sqrt{341})z^2A^2+92 z A \bigg).
\end{aligned}
\end{equation}

We first consider $z \le \frac{3}{2}$. Using the upper and lower bounds of \eqref{Az_bound}, we have
\begin{align*}
9D_3(z) -N_3(z) &> A^2(124z A+68) -\bigg((7-2\sqrt{10})z^4 A^4+(48-4\sqrt{119})z^3 A^3 +(114-6\sqrt{341})z^2A^2+92 z A \bigg)\\
&\ge -z \bigg(92 + \frac{7}{10}\frac{ z^3(1 + z)^3}{(1 + \sqrt{1 + z + z^2})^3} + \frac{22}{5}\frac{z^2 (1 + z)^2}{(1 + \sqrt{1 + z + z^2})^2} + \frac{18}{5}\frac{ z (1 + z)}{1 + \sqrt{1 + z + z^2}}\bigg) \\
&\qquad  + \frac{8z(17+34z+62z^2)}{(1+2z)^2}.
\end{align*}
Now, since $z\le \frac{3}{2}<\frac{5}{3}$, we have that
\begin{equation}\label{5_3bound}
1+\sqrt{1+z+z^2} \ge \frac{5}{3} + z,
\end{equation}
and therefore the denominators of the negative terms above may be replaced and simplified, yielding
\begin{align*}
9D_3(z) -N_3(z) &\ge \frac{z}{10(5 + 3z)^3 (1 + 2 z)^2}g_1(z), \\
g_1(z) &:= 55000 - 23700 z - 15320 z^2+ 118251 z^3 + 71109 z^4 - 28971 z^5 \\
&\qquad  - 30789 z^6 - 7776 z^7 - 756 z^8.
\end{align*}

For $z\le 1$, we have 
\begin{align*}
g_1(z) 
&\ge 15980 + z^3 (49959 + 71109 z) >0. 
\end{align*}
For $1\le z\le \frac{5}{4}$, we have
\begin{align*}
g_1(z) 
&\ge \frac{1}{2}\bigg(2875 + \frac{14287131}{128}z^3 + \frac{78831}{2}z^4 \bigg) \ge \frac{19700315}{256} >0 .
\end{align*}
Finally, for $\frac{5}{4}\le z\le \frac{3}{2}$, we have
\begin{align*}
g_1(z) 
&\ge -1270 + \frac{34389}{4} z^3 + \frac{2817}{2} z^4 \ge \frac{9707635}{512} >0 .
\end{align*}
Thus for $0<z\le \frac{3}{2}$, we have
\begin{align*}
9D_3(z) -N_3(z) &\ge \frac{z}{10(5 + 3z)^3 (1 + 2 z)^2}g_1(z) > 0.
\end{align*}

When $z\ge \frac{3}{2}$, we may again use \eqref{Az_bound} with \eqref{9D3N3} to obtain
\begin{align*}
9D_3(z) -N_3(z) &\ge z^4 A^2 \big(\sqrt{5} A^2 + \sqrt{2}\big)^2 +124z A^3+68A^2  \\
&\qquad -\bigg((7-2\sqrt{10})z^4 A^4+(48-4\sqrt{119})z^3 A^3 +(114-6\sqrt{341})z^2A^2+92 z A \bigg) \\
&\ge z \bigg( \frac{992 z^3}{(1 + 2 z)^3} + \frac{272 z}{(1 + 2 z)^2} + \frac{8 z^5 (5 + 40 z + 182 z^2 + 408 z^3 + 528 z^4)}{5 (1 + 2 z)^6}\\
&\hspace{1cm}  - \frac{7 z^3 (1 + z)^4}{10 (1 + \sqrt{1 + z + z^2})^4} - \frac{22 z^2 (1 + z)^3}{5 (1 + \sqrt{1 + z + z^2})^3} \\
&\hspace{2cm} - \frac{18 z (1 + z)^2}{5 (1 + \sqrt{1 + z + z^2})^2} - \frac{92 (1 + z)}{1 + \sqrt{1 + z + z^2}} \bigg).
\end{align*}

Noting that
\begin{equation}\label{3_2bound}
1+\sqrt{1+z+z^2} \ge 1+\sqrt{\frac{1}{4} + z+z^2} = \frac{3}{2}+z,
\end{equation}
we may replace the denominators of the negative terms above and simplify to yield
\begin{align*}
9D_3(z) -N_3(z) &\ge \frac{8}{5(1+2z)^6(3+2z)^4} \bigg( -3105 - 32886 z - 162858 z^2 - 483254 z^3 - 891642 z^4 \\
&\hspace{2.5cm}  - 966847 z^5 - 508816 z^6 - 12905 z^7 + 109108 z^8 + 93644 z^9 \\
&\hspace{3.5cm} + 134944 z^{10} + 122960 z^{11} + 51264 z^{12} + 8000 z^{13} \bigg) =: g_2(z).
\end{align*}
Since each of the terms of $g_2(z)$ containing the highest powers of $z$ are positive, once $g_2(z^*)>0$ for some $z^*\ge 1$, we will have $g_2(z)>0$ for all $z\ge z^*$. In particular, since  
\begin{align*}
\textstyle g_2(\frac{3}{2}) = \frac{646907}{163840} \approx 3.948 >0,
\end{align*}
we must have $g_2(z)>0$ for $z\ge \frac{3}{2}$. This establishes \eqref{upperh}. \\

Similarly, to show \eqref{lowerh}, we rewrite
\begin{equation}\label{lowerh3}
\begin{aligned}
9D_3(z) + N_3(z)  &= z^4 A^2(\sqrt{13}A^2-\sqrt{19})^2+ (2\sqrt{247}-29)z^4A^4 +2z^3A(A^2-\sqrt{11})^2 \\
&\hspace{2cm}+150z^2A^2 +164 zA +4A^2 \\
&\qquad - \bigg( 3z^4 + 2(12-2\sqrt{11})z^3A^3+ 147z^2A^4 + 15z^2 +196zA^3 \bigg).
\end{aligned} 
\end{equation}

When $z\ge 1$, we may use the upper and lower bounds \eqref{Az_bound} for $A(z)$ along with \eqref{3_2bound} and the fact that  $\sqrt{11}>13/4$ to obtain 
\begin{align*}
9D_3(z) + N_3(z)  &\ge \frac{z}{(1+2z)^6(3+2z)^4}\bigg(-4704 - 49399 z - 218100 z^2 - 514979 z^3 - 636860 z^4 \\
&\hspace{2cm} - 132064 z^5 + 944672 z^6 + 1784464 z^7 + 1643712 z^8 + 834432 z^9 \\
&\hspace{3cm} + 211456 z^{10} + 18432 z^{11} \bigg) =: g_3(z).
\end{align*}
Again, since the terms of $g_3(z)$ containing the highest powers of $z$ are all positive and 
\[ g_3(1) = \frac{3881062}{455625} >0, \]
we have $g_3(z)>0$ for $z\ge 1$. \\

For $z<1$, we first note that $A(z)$ is monotone increasing by \eqref{Az_bound}, and thus
\[A(z) \le A(1) \approx 0.6995 < 0.7.  \]
Therefore we have that 
\begin{align*}
4zA\big(41-49A^2 \big) &\ge 4zA \frac{1699}{100} = \frac{1699}{25}zA,
\end{align*}
and thus we may rewrite \eqref{lowerh3} as the bound
\begin{align*}
9D_3(z) + N_3(z)  &\ge 150z^2A^2 + \frac{1699}{25}zA +4A^2  - \bigg( 3z^4 + 2(12-2\sqrt{11})z^3A^3+ 147z^2A^4 + 15z^2 \bigg).
\end{align*}
Then, using the bounds \eqref{Az_bound} on $A(z)$ along with \eqref{5_3bound} and $\sqrt{11}>13/4$, we obtain
\begin{align*}
9D_3(z) + N_3(z) &\ge \frac{z^2}{25(1 + 2 z)^2(5 + 3 z)^4} (1841700 + 6025975 z + 12933975 z^2 + 14989020 z^3 \\
&\hspace{3cm} + 7067328 z^4 - 280899 z^5 - 1175175 z^6 - 275400 z^7 - 24300 z^8) \\
&\ge \frac{85926 z^2}{25(1 + 2 z)^2(5 + 3 z)^4} >0,
\end{align*}
since $z<1$. Thus we have that $9D_3(z) + N_3(z)>0$ for all $z$, yielding \eqref{lowerh} and completing the proof of Proposition \ref{h_bd}. 
\end{proof}


\bibliographystyle{abbrv} 
\bibliography{bib0}


\end{document}